\pgfplotsset{compat=newest}%
\pgfplotsset{plot coordinates/math parser=false}%
\pgfplotsset{yticklabel style={text width=2.5em,align=right}}%
\pgfplotsset{/pgf/number format/.cd, fixed,precision=6}%
\newlength\myheight
\newlength\mywidth
\DeclareUrlCommand\doi{\def\UrlLeft##1\UrlRight{DOI:\nobreakspace\href{http://dx.doi.org/##1}{##1}}\urlstyle{rm}}%
\crefname{equation}{}{}%
\crefname{enumeratei}{}{}%
\definecolor{tumblue}{HTML}{0065BD}%
\definecolor{tumblue1}{HTML}{98C6EA}%
\definecolor{tumblue2}{HTML}{64A0C8}%
\definecolor{tumblue3}{HTML}{0073CF}%
\definecolor{tumblue4}{HTML}{005293}%
\definecolor{tumblue5}{HTML}{003359}%
\definecolor{tumgreen}{HTML}{A2AD00}%
\definecolor{tumorange}{HTML}{E37222}%
\definecolor{tumivory}{HTML}{DAD7CB}%
\definecolor{tumviolet}{HTML}{69085A}%
\definecolor{tumred}{HTML}{C4071B}%
\definecolor{bg-gray}{HTML}{E5E5E5}%
\definecolor{bg-red}{HTML}{FFDDDD}%
\definecolor{bg-green}{HTML}{DDFFDD}%
\definecolor{bg-blue}{HTML}{DDDDFF}%
\definecolor{text-gray}{HTML}{404040}%
\definecolor{text-red}{HTML}{AA0000}%
\definecolor{text-green}{HTML}{00AA00}%
\definecolor{text-blue}{HTML}{0000AA}%
\newcommand{\ToEditor}[1]{{\footnotesize\color{red}Note to the editor/reviewer: #1}}}%
\newcommand{\ToEditor}[1]{}}%
\newcommand{\mcode}[1]{{\fontfamily{pcr}\selectfont #1}}%
\newtheorem{theorem}{Theorem}%
\newtheorem{lemma}{Lemma}%
\theoremstyle{definition}\newtheorem{definition}{Definition}%
\newtheorem{remark}{Remark}%
\newcommand{\subspace}{subspace}%
\renewcommand{\phi}{\varphi}
\newcommand{\ppp}{...}%
\newcommand{\ts}{\!}
\newcommand{\tin}{\ts\in\ts}
\newcommand{\teq}{\ts=\ts}%
\newcommand{\tneq}{\ts\neq\ts}%
\newcommand{\tto}{\ts\to\ts}%
\newcommand{\indextext}[1]{\mathrm{#1}}%
\newcommand{\fo}{n}
\newcommand{\ro}{q}
\newcommand{\is}{m}
\newcommand{\os}{p}
\newcommand{\strictlyproper}{\indextext{sp}}%
\newcommand{\improper}{\indextext{im}}%
\newcommand{\rsys}{\indextext{r}}
\newcommand{\esys}{\indextext{e}}
\newcommand{\Hsys}{\indextext{H}}
\newcommand{\Msys}{\indextext{M}}
\newcommand{\Fsys}{\indextext{F}}
\newcommand{\prim}{\indextext{P}}
\newcommand{\finite}{f}
\newcommand{\infinite}{\infty}
\newcommand{\idxleft}{l}
\newcommand{\idxright}{r}
\newcommand{\Hinf}{\mathcal{H}_\infty}
\newcommand{\Htwo}{\texorpdfstring{\mathcal{H}_2}{H2}}
\newcommand{\HtwoText}{\texorpdfstring{$\mathcal{H}_2$}{H2}}
\newcommand{\Ltwopm}{\texorpdfstring{\mathcal{L}_2^{(p,m)}}{L2(p,m)}}
\newcommand{\norm}[2]{\lVert #1 \rVert_{#2}}
\newcommand{\normHtwo}[1]{\lVert #1 \rVert_{\Htwo}}
\newcommand{\normLtwopm}[1]{\lVert #1 \rVert_{\Ltwopm}}
\newcommand{\normFrobenius}[1]{\lVert #1 \rVert_{F}}
\newcommand{\innerproductHtwo}[2]{\langle #1,\,#2 \rangle_{\Htwo}}
\newcommand{\assign}{:=}
\newcommand{\diff}{\mathrm{d}}
\newcommand{\dt}{\mathrm{d}t}
\newcommand{\grad}{\T{g}}
\newcommand{\Hess}{\T{H}}
\newcommand{\cost}{\mathcal{J}}
\newcommand{\Real}{\mathbb{R}}
\newcommand{\Complex}{\mathbb{C}}
\renewcommand{\Re}{\mathrm{Re}}%
\DeclareMathOperator{\Image}{\mathcal{R}}%
\DeclareMathOperator{\trace}{tr}
\DeclareMathOperator{\diag}{diag}
\newcommand{\transpose}{\indextext{T}}
\newcommand{\complexconjugate}[1]{\overline{#1}}
\newcommand{\complexconjugatetranspose}{\ast}
\newcommand{\T}[1]{\boldsymbol{\mathrm{#1}}}%
\def\tendeflatinlower#1{\expandafter\def\csname T#1\endcsname{\T{#1}}}%
\def\tendeflatinlowerall#1{\ifx#1\tendeflatinlowerall\else\tendeflatinlower{#1}\expandafter\tendeflatinlowerall\fi}%
\def\tendeflatinupper#1{\expandafter\def\csname T#1\endcsname{\T{#1}}}%
\def\tendeflatinupperall#1{\ifx#1\tendeflatinupperall\else\tendeflatinupper{#1}\expandafter\tendeflatinupperall\fi}%
\def\tendefgreeklower#1{\expandafter\def\csname T#1\endcsname{\T{\csname #1\endcsname}}}%
\def\tendefgreeklowerall#1{\ifx#1\tendefgreeklowerall\else\tendefgreeklower{#1}\expandafter\tendefgreeklowerall\fi}%
\def\tendefgreekupper#1{\expandafter\def\csname T#1\endcsname{\T{\csname #1\endcsname}}}%
\def\tendefgreekupperall#1{\ifx#1\tendefgreekupperall\else\tendefgreekupper{#1}\expandafter\tendefgreekupperall\fi}%
\newcommand{\Tzero}{\T{0}}%
\newcommand{\oft}{(t)}
\newcommand{\ofs}{(s)}
\newcommand{\Txr}{\Tx_{\rsys}}%
\newcommand{\Txt}{\Tx\oft}%
\newcommand{\Txp}{\dot{\Tx}}%
\newcommand{\Txpt}{\Txp\oft}%
\newcommand{\Tut}{\Tu\oft}%
\newcommand{\Tyt}{\Ty\oft}%
\newcommand{\TGs}{\TG \ofs}
\newcommand{\TGr}{\TG_\rsys}
\newcommand{\TGrs}{\TGr \ofs}
\newcommand{\TGe}{\TG_\esys}
\newcommand{\TGes}{\TGe \ofs}
\newcommand{\TGH}{\TG_\Hsys}
\newcommand{\TGHs}{\TGH \ofs}
\newcommand{\TGM}{\TG_\Msys}
\newcommand{\TGMs}{\TGM \ofs}
\newcommand{\TGF}{\TG_\Fsys}
\newcommand{\TGFs}{\TGF \ofs}
\newcommand{\TPs}{\TP \ofs}
\newcommand{\TGbot}{\TG_{\bot}}
\newcommand{\TGbots}{\TG_{\bot} \ofs}
\newcommand{\TGtimedomain}{\widehat{\TG}}
\newcommand{\TGHtimedomain}{\widehat{\TG}_\Hsys}
\newcommand{\TEW}{\widetilde{\TE}}
\newcommand{\TAW}{\widetilde{\TA}}
\newcommand{\TBW}{\widetilde{\TB}}
\newcommand{\TCW}{\widetilde{\TC}}
\newcommand{\TEr}{\TE_\rsys}
\newcommand{\TAr}{\TA_\rsys}
\newcommand{\TBr}{\TB_\rsys}
\newcommand{\TCr}{\TC_\rsys}
\newcommand{\TDr}{\TD_\rsys}
\newcommand{\TEre}{\TW^{\transpose}\TE\TV}
\newcommand{\TAre}{\TW^{\transpose}\TA\TV}
\newcommand{\TBre}{\TW^{\transpose}\TB}
\newcommand{\TCre}{\TC\TV}
\newcommand{\TEH}{\TE_\Hsys}
\newcommand{\TAH}{\TA_\Hsys}
\newcommand{\TBH}{\TB_\Hsys}
\newcommand{\TCH}{\TC_\Hsys}
\newcommand{\TJH}{\TJ_\Hsys}
\newcommand{\TNH}{\TN_\Hsys}
\newcommand{\TPH}{\TP_\Hsys}
\newcommand{\TQH}{\TQ_\Hsys}
\newcommand{\TEM}{\TE_\Msys}
\newcommand{\TAM}{\TA_\Msys}
\newcommand{\TBM}{\TB_\Msys}
\newcommand{\TCM}{\TC_\Msys}
\newcommand{\TEF}{\TE_\Fsys}
\newcommand{\TAF}{\TA_\Fsys}
\newcommand{\TBF}{\TB_\Fsys}
\newcommand{\TCF}{\TC_\Fsys}
\newcommand{\TXff}{\TX_{\finite\finite}}
\newcommand{\TXfi}{\TX_{\finite\infinite}}
\newcommand{\TXif}{\TX_{\infinite\finite}}
\newcommand{\TXii}{\TX_{\infinite\infinite}}
\newcommand{\TIfin}{\TI_{n_\finite}}
\newcommand{\TIinf}{\TI_{n_\infinite}}
\newcommand{\TIHfin}{\TI_{n_{\Hsys\finite}}}
\newcommand{\TIHinf}{\TI_{n_{\Hsys\infinite}}}
\newcommand{\TBfin}{\TB_\finite}
\newcommand{\TBinf}{\TB_\infinite}
\newcommand{\TCfin}{\TC_\finite}
\newcommand{\TCinf}{\TC_\infinite}
\newcommand{\TBHfin}{\TB_{\Hsys\finite}}
\newcommand{\TBHinf}{\TB_{\Hsys\infinite}}
\newcommand{\TCHfin}{\TC_{\Hsys\finite}}
\newcommand{\TCHinf}{\TC_{\Hsys\infinite}}
\newcommand{\Spec}{\T{\Pi}}
\newcommand{\Specleftfin}{\Spec^\finite_\idxleft}
\newcommand{\Specrightfin}{\Spec^\finite_\idxright}
\newcommand{\Specleftinf}{\Spec^\infinite_\idxleft}
\newcommand{\Specrightinf}{\Spec^\infinite_\idxright}
\newcommand{\SpecleftfinCT}{(\Spec^\finite_\idxleft)^\complexconjugatetranspose}
\newcommand{\SpecrightfinCT}{(\Spec^\finite_\idxright)^\complexconjugatetranspose}
\newcommand{\SpecHleftfin}{\Spec^\finite_{\Hsys \idxleft}}
\newcommand{\SpecHrightfin}{\Spec^\finite_{\Hsys \idxright}}
\newcommand{\SpecHleftfinCT}{(\Spec^\finite_{\Hsys \idxleft})^\complexconjugatetranspose}
\newcommand{\SpecHrightfinCT}{(\Spec^\finite_{\Hsys \idxright})^\complexconjugatetranspose}
\newcommand{\controllability}{\indextext{c}}%
\newcommand{\Gram}{\T{\Gamma}}%
\newcommand{\Gramcr}{\Gram^\controllability_\rsys}
\newcommand{\GramcM}{\Gram^\controllability_\Msys}
\newcommand{\GramcMinv}{(\GramcM)^{-1}}
\newcommand{\MEW}{\begin{bmatrix} \TIfin & \Tzero \\ \Tzero & \TN \end{bmatrix}}
\newcommand{\MAW}{\begin{bmatrix} \TJ & \Tzero \\ \Tzero & \TIinf \end{bmatrix}}
\newcommand{\MBW}{\begin{bmatrix} \TBfin \\ \TBinf \end{bmatrix}}
\newcommand{\MCW}{\begin{bmatrix} \TCfin & \TCinf \end{bmatrix}}
\newcommand{\pencilEA}{\lambda\,\TE-\TA}
\newcommand{\pencilErAr}{\lambda\,\TEr-\TAr}
\newcommand{\pencilEHAH}{\lambda\,\TEH-\TAH}
\newcommand{\sEmA}{s\,\TE-\TA}
\newcommand{\Sys}{\T{\Sigma}}
\newcommand{\Sysr}{\Sys_\rsys}
\newcommand{\SysM}{\Sys_\Msys}
\newcommand{\SysF}{\Sys_\Fsys}
\newcommand{\Realization}{(\TE,\,\TA,\,\TB,\,\TC)}
\newcommand{\RealizationD}{(\TE,\,\TA,\,\TB,\,\TC,\,\TD)}
\newcommand{\Realizationr}{(\TEr,\,\TAr,\,\TBr,\,\TCr)}
\newcommand{\RealizationH}{(\TEH,\,\TAH,\,\TBH,\,\TCH)}
\newcommand{\RealizationM}{(\TEM,\,\TAM,\,\TBM,\,\TCM)}
\newcommand{\RealizationF}{(\TEF,\,\TAF,\,\TBF,\,\TCF)}
\newcommand{\RealizationrTilde}{(\TEr,\,\TAr,\,\TBr,\,\TR,\,\TI_m)}
\newcommand{\Transfer}{\TC\left(\sEmA\right)^{-1}\TB}
\newcommand{\Transferr}{\TCr\left(s\,\TEr-\TAr\right)^{-1}\TBr}
\newcommand{\TransferM}{\TCM\left(s\,\TEM-\TAM\right)^{-1}\TBM}
\newcommand{\TransferF}{\TCF\left(s\,\TEF-\TAF\right)^{-1}\TBF}
\newcommand{\TransferSubspaceXY}[2]{\mathcal{G}(#1,\,#2)}%
\def\DAE{DAE}%
\def\DAEsys{\DAE\ system}%
\def\PORK{PORK}%
\def\PORKA{\PORK\ algorithm}%
\def\Weierstrass{Weierstraß}%
\def\WCF{\Weierstrass\ canonical form}%
\def\Jordan{Jordan}%
\def\JCF{\Jordan\ canonical form}%
\def\HTIP{\HtwoText\ inner product}%
\def\po{pseudo-optimal}%
\def\poy{\po ity}%
\def\HTO{\HtwoText\ optimal}%
\def\HTOy{\HtwoText\ optimality}%
\def\HTPO{\HtwoText\ \po}%
\def\HTPOy{\HtwoText\ \poy}%
\def\HTN{\HtwoText-norm}%
\def\Sylvester{Sylvester}%
\def\SE{\Sylvester\ equation}%
\def\gSE{generalized \SE}%
\def\TRACVolume{???}%
\def\TRACDate{April 2018}%
    \renewcommand{\sfdefault}{phv}%
    \renewcommand{\sfdefault}{cmr}%
\def\section{\@startsection{section}{1}{\z@}{-0.1ex plus -.1ex minus -.1ex}{0.1ex plus .05ex}{\large\bf}}%
\def\subsection{\@startsection{subsection}{2}{\z@}{-1.5ex}{0.01ex}{\normalsize\bf}}%
\def\subsubsection{\@startsection{subsubsection}{3}{\z@}{1.3ex \@plus .5ex \@minus .2ex}{-.5em \@plus -.1em}{\reset@font\normalsize\bfseries}}%
\renewenvironment{abstract}{\centerline{\bf Abstract}\vspace{-2ex}\begin{quote}\small}{\par\end{quote}\vskip 1ex}%
\newcommand{\DrawTUMTercet}{%
    \tikzset{external/export next=false}%
    \begin{tikzpicture}[remember picture, overlay]%
        \node[shift={(1cm,-1cm)},outer sep=0,inner sep=0,anchor = north west] at (current page.north west) {%
            \color{tumblue}%
            \fontfamily{phv}\fontsize{8}{10.6}\selectfont%
            \begin{tabular}{@{}l@{}}%
                Chair of Automatic Control\\%
                Department of Mechanical Engineering\\%
                Technical University of Munich%
            \end{tabular}%
            \fontfamily{\sfdefault}\selectfont%
        };%
    \end{tikzpicture}%
}%
\newcommand{\DrawTUMLogo}{%
    \tikzset{external/export next=false}%
    \begin{tikzpicture}[remember picture, overlay]%
        \node[shift={(-1cm,-1cm)},outer sep=0,inner sep=0,anchor = north east] at (current page.north east) {%
            \resizebox{!}{1cm}{%
                \begin{tikzpicture}[y=0.80pt, x=0.80pt, yscale=-1, xscale=1, inner sep=0pt, outer sep=0pt]%
                    \begin{scope}[even odd rule,line width=0.800pt]%
                      \begin{scope}[fill=tumblue]%
                        \path[fill] (405,0) -- (370,0) -- (370,320) -- (290,320) -- (290,0) -- (255,0) -- (0,0) --
                            (0,70) -- (70,70) -- (70,390) -- (140,390) -- (140,70) -- (220,70) -- (220,390) -- 
                            (255,390) -- (405,390) -- (440,390) -- (440,70) -- (520,70) -- (520,390) -- (590,390) -- 
                            (590,70) --  (670,70) -- (670,390) -- (740,390) -- (740,35) -- (740,0) -- (405,0) -- cycle;
                      \end{scope}%
                    \end{scope}%
                \end{tikzpicture}%
            }%
        };%
    \end{tikzpicture}%
}%
\newcommand{\DrawTracTitleNote}{%
    \tikzset{external/export next=false}%
    \begin{tikzpicture}[remember picture, overlay]%
        \node[shift={(8cm,-1cm)},outer sep=0,inner sep=0,anchor = north west] at (current page.north west) {%
            \begin{tabular}{@{}c@{}}%
                \normalsize Technical Reports on Automatic Control\\[-0.2em]%
                \footnotesize Volume \TRACVolume, \TRACDate\\[-0.3em]%
                \footnotesize Available online at: \href{https://www.rt.mw.tum.de}{www.rt.mw.tum.de}%
            \end{tabular}%
        };%
    \end{tikzpicture}%
}%
\def\@maketitle{%
    \newpage%
    \ifthenelse{\boolean{WithTUMHeader}}{\DrawTUMTercet\DrawTUMLogo}{}%
    \ifthenelse{\boolean{isTRAC}}{\DrawTracTitleNote}{}%
    \vspace*{-0.5cm}%
    \begin{center}%
        \let \footnote \thanks%
        {\Large\bf \@title \par}%
        \vskip 1.5em%
        {%
            \large%
            \lineskip .5em%
            \begin{tabular}[t]{c}%
                \@author%
            \end{tabular}%
            \par%
        }%
    \end{center}%
    \par%
    \vskip 1.5em%
}%
\renewcommand*{\@fnsymbol}[1]{\@alph{#1})}%
\date{April 2018}%
\title{\HtwoText\ Pseudo-Optimal Reduction of Structured DAEs by Rational Interpolation}%
\newcommand{\runtitle}{\HtwoText\ Pseudo-Optimal Reduction of Structured DAEs by Rational Interpolation}%
\author{Philipp Seiwald\footnote{Corresponding author, e-mail: \href{mailto:philipp.seiwald@tum.de}{philipp.seiwald@tum.de}, present address: Technical University of Munich, Chair of Applied Mechanics, Boltzmannstr. 15, D-85748 Garching, Germany}\hspace*{0.7em}\footnote{Technical University of Munich, Chair of Automatic Control, Boltzmannstr. 15, D-85748 Garching, Germany} \quad Alessandro Castagnotto\footnotemark[2] \quad Tatjana Stykel\footnote{University of Augsburg, Department of Mathematics, Universit\"atsstr. 14, D-86159 Augsburg, Germany} \quad Boris Lohmann\footnotemark[2]}%
\begin{document}%
    %
    \maketitle%
\begin{abstract}%
    In this contribution, we extend the concept of \HTIP\ and \HTPOy\ to dynamical systems modeled by differential-algebraic equations (DAEs). To this end, we derive projected Sylvester equations that characterize the \HTIP\ in terms of the matrices of the DAE realization. Using this result, we extend the $\Htwo$ pseudo-optimal rational Krylov algorithm for ordinary differential equations to the DAE case. This algorithm computes the globally optimal reduced-order model for a given subspace of $\Htwo$ defined by poles and input residual directions. Necessary and sufficient conditions for $\Htwo$ pseudo-optimality are derived using the new formulation of the \HTIP\ in  terms of tangential interpolation conditions. Based on these conditions, the cumulative reduction procedure combined with the adaptive rational Krylov algorithm, known as CUREd SPARK, is extended to DAEs. Important properties of this procedure are that it guarantees stability preservation and adaptively selects interpolation frequencies and reduced order. Numerical examples are used to illustrate the theoretical discussion. Even though the results apply in theory to general DAEs, special structures will be exploited for numerically efficient implementations.%
\end{abstract}%
%
\begingroup%
    \small%
    \textbf{Keywords:} %
    model order reduction, differential algebraic equation, stability preservation, Krylov subspace method, H2 pseudo-optimality, rational interpolation%
\endgroup\par%
%
\begingroup%
    \small%
    \textbf{Declaration of interest:} %
    The work related to this contribution is supported by the German Research Foundation (DFG), grants LO~408/19-1 and STY~58/1-2.%
\endgroup\par%
%
    %
\section{Introduction}%
\label{sec:introduction}%
Systems of differential-algebraic equations (DAEs) represent a widespread formalism in the modeling of dynamical systems, e.g. constrained mechanical systems and electrical networks \cite{Kunkel2006,Riaza2008,Simeon13}. Due to the automatic, object-oriented generation of the model equations and the resulting redundancies in the descriptor variables, DAE systems often reach a very high order, thus making simulations and control design computationally challenging, if at all feasible. This motivates the use of model order reduction techniques for approximating large-scale system with reduced-order models which capture the relevant input-output dynamics and preserving fundamental properties of the original model such as stability. Due to the particular characteristics of DAEs, new model reduction techniques designed to work directly on the DAE system matrices are necessary. Their goal is to approximate the subsystem describing the dynamic evolution while preserving the subsystem describing the algebraic constraints.\par%
Several reduction methods have been developed in recent years to correctly deal with the algebraic part of the DAE, a survey of which is given in \cite{Benner2017}. Existing reduction approaches include the extension of the (low-rank, square-root) \emph{balanced truncation} algorithm as well as some variants thereof \cite{Stykel2004,morReiS10,morReiS11}. Amongst others, these methods bear the advantage of having a~priori error bounds which allow an appropriate selection of reduced order by inspecting proper and improper Hankel singular values. Balancing-related model reduction techniques are based on matrix equations, which in the large-scale setting can be solved using low-rank iterative methods, see \cite{Benner2013,Simoncini2016} for recent surveys. These methods may, however, have difficulties to converge, as, for example, for weakly damped mechanical systems.\par%
A robust and numerically efficient alternative to balanced truncation is given by \emph{rational interpolation} (also known as \emph{moment matching} or \emph{Krylov subspace methods}) \cite{Villemagne1987,Grimme1997,Gallivan2004TI,Gugercin2008}. Modified interpolatory subspace conditions were presented in \cite{Gugercin2013} to effectively reduce DAEs by means of tangential interpolation. Based on these conditions, a~fixed-point iteration was presented there that yields a~locally \HTO\space reduced model at convergence. This method has shown to work well in practice, even though no proof of convergence or stability preservation is available for general systems. In addition, the problem of finding a suitable reduced order still remains.\par%
In this contribution, we consider the problem of finding the globally optimal reduced-order model in a~subspace of $\Htwo$ defined by reduced poles and input residual directions. To this end, we characterize the \HTIP\ of two rational matrix-valued functions in terms of their state space realizations with singular descriptor matrices. We also develop an algorithm to efficiently construct a reduced-order model which solves the optimization problem. This algorithm is an extension and generalization of the model reduction method proposed in \cite{Wolf2014} for ordinary differential equations (ODEs) to DAEs. The resulting \HTPO \space rational Krylov algorithm is a~fundamental component of the fully adaptive reduction method known as CUREd SPARK \cite{Panzer2014} which can now be applied to DAE systems. To the authors' knowledge, this is the first rational interpolation algorithm for DAEs which guarantees stability preservation and adaptively selects interpolation frequencies and reduced order.\par%
The remainder of the paper is outlined as follows. \Cref{sec:fundamentals} revises fundamentals about linear DAEs, while \Cref{sec:MOR} introduces the model reduction problem and known methods for tangential interpolation. In \Cref{sec:H2innerproduct}, we derive a new formulation of the \HTIP\ for strictly proper transfer functions based on the solution of projected Sylvester equations. In \Cref{sec:pork}, this new result is used to extend the \HTPO\space rational Krylov algorithm to DAEs. In \Cref{sec:curedspark}, we briefly revise the fully adaptive reduction framework CUREd SPARK that will be ultimately used in numerical experiments. As the SPARK algorithm has not been extended to multiple-input and multiple-output (MIMO) models yet, the numerical examples in \Cref{sec:numericalresults} are restricted to the single-input and single-output (SISO) case. In addition, the test models considered here have special structure allowing for a~numerical efficient implementation. Note, however, that the main result of this contribution, namely the derivation of the \HTPO\ rational Krylov algorithm for DAEs using a special formulation of the $\mathcal{H}_2$ inner product 
is neither restricted to the SISO case nor to the special structure of the DAE system.\par%
%
\section{Fundamentals of Linear DAEs}%
\label{sec:fundamentals}%
Consider a linear time-invariant control system%
\begin{equation}%
    \label{eq:LTI_DAE_Definition}%
    \TE\,\Txpt=\TA\,\Txt+\TB\,\Tut\;,\qquad \Tyt=\TC\,\Txt+\TD\,\Tut\;,%
\end{equation}%
with state $\Txt\tin\Real^{n}$, input $\Tut\tin\Real^{m}$, output $\Tyt\tin\Real^{p}$, and system matrices $\TE,\TA\tin\Real^{n\times n}$, $\TB\tin\Real^{n\times m}$, $\TC\tin\Real^{p\times n}$, and $\TD\tin\Real^{p\times m}$. Within this contribution, we consider the case of singular descriptor matrix $\TE$ such that \Cref{eq:LTI_DAE_Definition} describes a~\DAEsys\ (also called \emph{descriptor system}). While $\TGs\teq\TC(s\TE-\TA)^{-1}\TB+\TD$ denotes the \emph{transfer function} of \Cref{eq:LTI_DAE_Definition}, its specific \emph{realization} is abbreviated with $\Sigma\teq\RealizationD$.\par%
We assume that the matrix pencil $\pencilEA$ is \emph{regular}, i.\,e., $\det (\pencilEA)\neq 0$ for some $\lambda\tin\Complex$. In this case, there exist regular transformation matrices $\TP,\,\TQ \tin \Real^{n \times n}$ such that%
\begin{equation}%
    \label{eq:E_and_A_in_Weierstrass}%
    \TEW=\TP\,\TE\,\TQ=\MEW,\quad\TAW=\TP\TA\TQ=\MAW\;,%
\end{equation}%
where both $\TJ$ and $\TN$ are in real \JCF\ \cite{Gantmacher1959}. The matrix $\TN$ is nilpotent of index~$\nu$, i.\,e., $\TN^{\nu}\teq\T0$ and $\TN^{\nu-1}\neq\Tzero$. It determines the eigenvalue at infinity, whereas the eigenvalues of $\TJ$ are the finite eigenvalues of $\pencilEA$ . The identity matrices $\TIfin$ and $\TIinf$ are of dimension $n_{\finite}$ and $n_{\infinite}$ which denote the number of finite and infinite eigenvalues, respectively. The set of finite and infinite generalized eigenvalues of $\pencilEA$ is denoted by $\Lambda(\TE,\TA)$ or just by $\Lambda(\TA)$ if $\TE=\TI$. The representation \Cref{eq:E_and_A_in_Weierstrass} is called the \emph{\WCF}. Although $\TP$ and $\TQ$ will not be computed explicitly in the following, the Weierstra{\ss} canonical form is important for the theoretical analysis and derivations.\par%
Note that in the case of linear \DAEsys s, $\nu$ represents the \emph{differentiation index}~\cite{Mehrmann2006}, which is related to the structural complexity of the given problem. Though it will be used in the following considerations, e.g. to characterize different types of DAEs, an explicit knowledge of the value of $\nu$ is not required for -- nor computed during -- reduction.\par%
In the following, we restrict ourselves to the \emph{asymptotically stable} system \Cref{eq:LTI_DAE_Definition} for which all finite generalized eigenvalues of $\pencilEA$ have negative real part. Note that this is not restrictive, as unstable modes are always dominant and the reduction of unstable models generally requires the separation of stable and antistable part, reducing only the former and preserving the latter.\par%
A characteristic property of DAEs is that a \emph{polynomial contribution} to the transfer function $\TGs$ may be present. Let the pencil $\lambda \TE- \TA$ be in the Weierstra{\ss} canonical form \Cref{eq:E_and_A_in_Weierstrass} and let the matrices%
\begin{equation}%
    \label{eq:Transformation_into_WCF_all_matrices}%
    \TBW=\TP\,\TB=\MBW\;,\quad \TCW=\TC\,\TQ=\MCW%
\end{equation}%
be partitioned according to the block-diagonal form of $\TEW$ and $\TAW$. Then the transfer function $\TGs$ can additively be decomposed as%
\begin{equation}%
    \label{eq:DAE_transfer_function}%
    \TGs=\Transfer+\TD=\widetilde{\TC}(s\,\widetilde{\TE}-\widetilde{\TA})^{-1}\widetilde{\TB}+\TD = \TG^{\strictlyproper}(s) + \TP(s),%
\end{equation}%
where $\TG^{\strictlyproper}(s)=\TC_{f} (s\,\TIfin-\TJ)^{-1}\TB_{f}$ is the \emph{strictly proper} part and%
\begin{equation*}%
    \TP(s)=\TC_{\infty}(s\,\TN-\TIinf)^{-1}\TB_{\infty}+\TD= -\sum\limits_{k=0}^{\nu-1}\TC_{\infty}\TN^k\,\TB_{\infty}\,s^k +\TD%
\end{equation*}%
is the \emph{polynomial} part of $\TGs$.\par%
In many cases, the explicit computation of the \WCF\ \Cref{eq:E_and_A_in_Weierstrass} is not required. Instead, one can act separately on the strictly proper and polynomial parts of $\TGs$ by making use of the \emph{spectral projectors} \cite{Benner2017,Mehrmann2006}. The matrices%
\begin{equation}%
    \label{eq:Spectral_Projectors_finite}%
    \Specleftfin=\TP^{-1}\begin{bmatrix} \TIfin & \Tzero \\ \Tzero & \Tzero \end{bmatrix}\TP\qquad\mbox{and}\qquad\Specrightfin=\TQ\begin{bmatrix} \TIfin & \Tzero \\ \Tzero & \Tzero \end{bmatrix}\TQ^{-1}%
\end{equation}%
are called the spectral projectors onto the left and right deflating subspaces of $\pencilEA$ corresponding to the finite eigenvalues along the left and right deflating subspaces corresponding to the infinite eigenvalues. Similarly, the matrices%
\begin{equation}%
    \label{eq:Spectral_Projectors_infinite}%
    \Specleftinf=\TP^{-1}\begin{bmatrix} \Tzero & \Tzero \\ \Tzero & \TIinf \end{bmatrix}\TP\qquad\mbox{and}\qquad\Specrightinf=\TQ\begin{bmatrix} \Tzero & \Tzero \\ \Tzero & \TIinf \end{bmatrix}\TQ^{-1}%
\end{equation}%
are called the spectral projectors onto the left and right deflating subspaces of $\pencilEA$ corresponding to the infinite eigenvalues along the left and right deflating subspaces corresponding to the finite eigenvalues. Using these projectors, one can partition $\TGs=\TG^\strictlyproper(s)+\TPs$ with%
\begin{equation}%
    \label{eq:RealizationofstrictlyproperPart}%
    \TG^\strictlyproper(s)=\TC\Specrightfin(s\TE-\TA)^{-1}\TB=\TC(s\TE-\TA)^{-1}\Specleftfin\TB=\TC\Specrightfin(s\TE-\TA)^{-1}\Specleftfin\TB \quad%
\end{equation}%
for the strictly proper part and%
\begin{equation}%
\label{eq:RealizationofimproperPart}%
    \TPs=\TC\Specrightinf(s\TE-\TA)^{-1}\TB+\TD=\TC(s\TE-\TA)^{-1}\Specleftinf\TB+\TD=\TC\Specrightinf(s\TE-\TA)^{-1}\Specleftinf\TB+\TD%
\end{equation}%
for the polynomial part \cite{Gugercin2013}. Note that the numerical computation of the spectral projectors is in general ill-conditioned. However, for certain types of \DAEsys s, analytic expressions can be derived directly by exploiting the structure of the system matrices. Several examples, including semi-explicit systems of index 1, Stokes-like systems of index 2 and mechanical systems of index 1 and 3 are collected in \cite{Benner2017}. Since the spectral projectors are in general dense matrices, their explicit computation and storage should be avoided. Fortunately, they often inherit the block structure of the system matrices, therefore projector-vector products can be performed block-wise exploiting the sparsity of the system matrices and resulting in a more efficient implementation \cite{Benner2017}. Therefore, in the following we assume that the given system is of special structure and the respective spectral projectors are known.\par%
%
%
%
\section{Model Reduction by Tangential Interpolation}%
\label{sec:MOR}%
In this contribution, we consider the reduction of a full-order model (FOM) given as a DAE system \Cref{eq:LTI_DAE_Definition} by means of tangential interpolation. Our goal is to design a reduced-order model (ROM)%
\begin{equation}%
    \TGr(s)=\Transferr + \TD_\rsys%
    \label{eq:ROM_general}%
\end{equation}%
with $\TEr,\TAr\tin\Real^{q\times q}$, $\TBr\tin\Real^{q\times m}$, $\TCr\tin\Real^{p\times q}$, and $\TDr\tin\Real^{p\times m}$ which satisfies the tangential interpolation conditions%
\begin{subequations}%
    \begin{align}%
        \TG(\sigma_i)\,\Tr_i &= \TGr(\sigma_i)\, \Tr_i\;, & & i = 1,\,\ppp\,,\,q,
        \label{eq:tangential_interpolation_conditions:1}\\
        \Tl_j^{\transpose}\,\TG(\mu_j) &= \Tl_j^{\transpose}\,\TGr(\mu_j)\;, & & j = 1,\,\ppp\,,\,q,
        \label{eq:tangential_interpolation_conditions:2}\\
        \Tl_i^{\transpose}\left(\left.\frac{\diff}{\diff s}\TG(s)\right|_{s=\sigma_i}\right)\Tr_i & =
        \Tl_i^{\transpose}\left(\left.\frac{\diff}{\diff s}\TGr(s)\right|_{s=\sigma_i}\right)\Tr_i, & & i = 1,\,\ppp\,,\,q,
        \label{eq:tangential_interpolation_conditions:3}%
    \end{align}%
    \label{eq:tangential_interpolation_conditions}%
\end{subequations}%
for complex frequencies $\sigma_i,\mu_j \tin\Complex $ as well as right and left tangential directions $\Tr_i\tin\Complex^{m}$ and \mbox{$\Tl_j\tin\Complex^{p}$}. A reduced model \Cref{eq:ROM_general} satisfying all three conditions in \Cref{eq:tangential_interpolation_conditions} is referred to as a \emph{bitangential Hermite interpolant}.\par%
The main advantage of the interpolatory approximation approach is the existence of numerically efficient methods which have been effectively used for reduction of large-scale systems, see \cite{Villemagne1987, Grimme1997, Antoulas2010, Gallivan2004TI, Gugercin2008, VanDooren2008, Beattie2009, Druskin2014, Panzer2013, Wolf2013} for ODEs and \cite{Gugercin2013,Ahmad2014,Benner2017} for the DAE case. Moreover, the interpolatory conditions allow to directly specify frequency regions in which the approximation quality should be higher. Further note that by means of Carleman bilinearization and quadratic bilinearization, tangential interpolation has also been extended to nonlinear models \cite{Benner2016,Ahmad2017,Goyal2015}. In the following, we discuss the tangential interpolation problem from two different -- though related -- perspectives: projective and non-projective. Both of them will play a role in \Cref{sec:pork}.\par%
\subsection{The Projective Framework}%
\label{subsec:MOR_projection}%
Traditionally, the model reduction problem is formulated in a projective framework, where the design parameters are projection matrices $\TV, \TW \tin \Real^{n\times q}$ spanning appropriate subspaces. The reduced-order model is obtained by approximating the state vector $\Tx\approx \widehat{\Tx} \teq \TV \Tx_\rsys$ and enforcing the Petrov-Galerkin condition $\TW^\transpose\left(\TE\TV\Txp_\rsys-\TA\TV\Txr-\TB \Tu \right) \teq 0$
in the form%
\begin{equation}%
    \TEr\Txp_\rsys = \TAr\Txr + \TBr\Tu, \qquad%
    \Ty_\rsys =\TCr\Txr + \TD_\rsys\Tu,%
    \label{eq:ROM_by_projection}%
\end{equation}%
where $\TEr\teq \TEre$, $\TAr \teq \TAre$, $\TBr\teq\TBre$, and $\TCr\teq\TCre$. In general, $\TD_\rsys\teq\TD$ is chosen and, hence, the reduction process can often be carried out without explicit consideration of $\TD$. Note that the choice $\TD_\rsys \teq \TD$ is necessary for bounded \HtwoText\  error but is in general non-optimal with respect to the $\Hinf$ error, see \cite{morFlaBG13,morCasBG17}.\par%
To enforce the tangential interpolation conditions \Cref{eq:tangential_interpolation_conditions}, the projection matrices can be constructed according to the following theorem.%
\begin{theorem}[\cite{Villemagne1987,Gallivan2004TI,Beattie2017}] \label{thm:tangential_interpolation}%
Consider a DAE \Cref{eq:LTI_DAE_Definition} with the transfer function $\TGs$. Let $\TGrs$ be the transfer function of the reduced-order model \Cref{eq:ROM_by_projection} obtained using the projection matrices $\TV$ and $\TW$. Furthermore, let $\sigma_i,\mu_j\tin\Complex$ be outside of the generalized spectra of the pencils $\pencilEA$ and $\pencilErAr$ and let $\Tr_i\tin \Complex^{m}$ and $\Tl_j\tin \Complex^{p}$ be nonzero vectors for $i,j=1,\,\ppp\,,\,q$.%
\begin{enumerate}%
    \item If%
          \begin{equation}\label{eq:input_Krylov}%
               (\TA-\sigma_i\TE)^{-1}\TB\,\Tr_i \in\Image(\TV), \hspace{2.5cm}i=1,\,\ppp\,,\,q\;,%
          \end{equation}%
        then $\TG(\sigma_i)\,\Tr_i \teq \TGr(\sigma_i)\,\Tr_i$ for $i=1,\,\ppp\,,\,q$.%
    \item If%
        \begin{equation}\label{eq:output_Krylov}%
            (\TA-\mu_j\TE)^{-\transpose}\TC^{\transpose}\,\Tl_j \in\Image(\TW), \hspace{2cm} j=1,\,\ppp\,,\,q\;,%
        \end{equation}%
        then $\Tl_j^{\transpose}\,\TG(\mu_j)\teq \Tl_j^{\transpose}\,\TGr(\mu_j)$ for $j=1,\,\ppp\,,\,q$.%
    \item If both \Cref{eq:input_Krylov} and \Cref{eq:output_Krylov} hold for $\sigma_i\teq\mu_i$, then, in addition,%
        \begin{equation*}%
            \Tl_i^{\transpose}\left(\left.\frac{\diff}{\diff s}\TG(s)\right|_{s=\sigma_i}\right)\Tr_i= \Tl_i^{\transpose}\left(\left.\frac{\diff}{\diff s}\TGr(s)\right|_{s=\sigma_i}\right)\Tr_i, \qquad i = 1,\,\ppp\,,\,q\;.%
        \end{equation*}%
\end{enumerate}%
\end{theorem}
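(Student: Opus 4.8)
The plan is to use the elementary \emph{Galerkin reproduction} principle: a Petrov--Galerkin projection reproduces exactly any full-order resolvent vector that already lies in the trial subspace $\Image(\TV)$, and dually in $\Image(\TW)$. Under the standing choice $\TD_\rsys\teq\TD$, writing $\TGs\teq\Transfer+\TD$ and $\TGrs\teq\Transferr+\TD_\rsys$ shows the constant feedthrough cancels, so it suffices to match the rational (resolvent) parts. Crucially, the resolvents are only ever evaluated at $\sigma_i,\mu_j$, which by assumption lie outside both generalized spectra; hence the singularity of $\TE$ never intervenes, the resolvent is a genuine finite matrix at each shift, and the classical \ODE\ argument transfers verbatim to the \DAE\ setting.

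For part~(i) I would set $\Tv\assign(\TA-\sigma_i\TE)^{-1}\TB\,\Tr_i$ and invoke \Cref{eq:input_Krylov} to write $\Tv\teq\TV\,\Tx_\rsys$ for some $\Tx_\rsys\tin\Complex^{q}$. Rearranging gives $(\TA-\sigma_i\TE)\TV\Tx_\rsys\teq\TB\Tr_i$; left-multiplying by $\TW^{\transpose}$ and recalling $\TEr\teq\TEre$, $\TAr\teq\TAre$, $\TBr\teq\TBre$ shows $\Tx_\rsys$ solves the reduced resolvent equation $(\TAr-\sigma_i\TEr)\Tx_\rsys\teq\TBr\Tr_i$, uniquely since $\sigma_i\tnotin\Lambda(\TEr,\TAr)$. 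With $\TCr\teq\TCre$ this yields $\TGr(\sigma_i)\Tr_i\teq-\TCr\Tx_\rsys\teq-\TC\TV\Tx_\rsys\teq-\TC\Tv\teq\TG(\sigma_i)\Tr_i$. Part~(ii) is the transposed statement: putting $\Tw\assign(\TA-\mu_j\TE)^{-\transpose}\TC^{\transpose}\Tl_j$, writing $\Tw\teq\TW\,\Ty_\rsys$ by \Cref{eq:output_Krylov}, and repeating the manipulation on the transpose gives $\Tl_j^{\transpose}\TCr\teq\Ty_\rsys^{\transpose}(\TAr-\mu_j\TEr)$ and hence $\Tl_j^{\transpose}\TGr(\mu_j)\teq-\Ty_\rsys^{\transpose}\TBr\teq\Tl_j^{\transpose}\TG(\mu_j)$.

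For the Hermite condition~(iii) I would differentiate $\TGs\teq\Transfer$ to get $\TG'(s)\teq-\TC(\sEmA)^{-1}\TE(\sEmA)^{-1}\TB$, and likewise for $\TGr$. Evaluating at $s\teq\sigma_i\teq\mu_i$ and reusing the two vectors already built in (i)--(ii) gives $\Tl_i^{\transpose}\TG'(\sigma_i)\Tr_i\teq-\Tw^{\transpose}\TE\,\Tv$ for the full model and $\Tl_i^{\transpose}\TGr'(\sigma_i)\Tr_i\teq-\Ty_\rsys^{\transpose}\TEr\,\Tx_\rsys$ for the reduced one. These coincide because $\Ty_\rsys^{\transpose}\TEr\Tx_\rsys\teq(\TW\Ty_\rsys)^{\transpose}\TE(\TV\Tx_\rsys)\teq\Tw^{\transpose}\TE\Tv$; this is exactly where the middle factor $\TEr\teq\TW^{\transpose}\TE\TV$ of the projection folds in correctly, and it is the only place that genuinely requires both interpolation conditions to hold at the same shift.

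No deep obstacle arises --- the statement is classical and the work is bookkeeping. The two points demanding care are the consistent sign convention when passing between $(\sEmA)$ and $(\TA-\sigma_i\TE)$ in the resolvent, and the verification that the reduced pencil is regular at each shift so that $\Tx_\rsys$ and $\Ty_\rsys$ are well defined; both are supplied by the spectral hypothesis. Conceptually, the entire content is that projection acts as the identity on whatever the \rKry\ conditions force into $\Image(\TV)$ and $\Image(\TW)$.
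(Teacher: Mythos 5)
Your proof is correct and complete. Note that the paper itself gives no proof of this theorem---it is quoted from \cite{Villemagne1987,Gallivan2004TI,Beattie2017}---and your argument is precisely the standard projection proof from that literature: resolvent vectors forced into $\Image(\TV)$ (resp.\ $\Image(\TW)$) are reproduced exactly by the Petrov--Galerkin projection, the spectral hypothesis on $\sigma_i,\mu_j$ guarantees regularity of the reduced resolvents (and uniqueness of $\Tx_\rsys$, $\Ty_\rsys$), and the identity $\Ty_\rsys^{\transpose}\,\TEr\,\Tx_\rsys = \Tw^{\transpose}\TE\,\Tv$ delivers the Hermite condition, with the singularity of $\TE$ playing no role since the resolvents are only evaluated at the shifts.
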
%
One of the beauties of this result is that it holds for both regular and singular matrices $\TE$ meaning that tangential interpolation can be achieved both for ODEs and DAEs by using \Cref{thm:tangential_interpolation}. %
In addition, necessary conditions for \HTOy\ can be derived in terms of tangential interpolation conditions as follows.%
\begin{theorem}[\cite{VanDooren2008}]\label{thm:H2optimality}%
Let a DAE system \Cref{eq:LTI_DAE_Definition} be given and let $\TGs$ denote its transfer function. Consider a~reduced-order model ${\TGrs\teq\sum\limits_{i=1}^{\ro}\frac{\Tc_{r,i}\Tb_{r,i}}{s-\lambda_{r,i}}}$ with distinct poles $\lambda_{r,i}\tin\Complex$ and input and output residual directions $\Tb_{r,i}^{\complexconjugatetranspose}\tin\Complex^{\is}$ and $\Tc_{r,i}\tin\Complex^{\os}$, respectively. If $\TGrs$ satisfies $\TGrs\teq\arg\min\limits_{\text{dim}(\widehat{\TG}_\rsys)=\ro}\norm{\TG-\widehat{\TG}_\rsys}{\Htwo}$ at least locally, then it holds%
\begin{subequations}%
    \begin{align}%
        \TG(-\complexconjugate{\lambda}_{r,i})\,\Tb_{r,i}^{\complexconjugatetranspose} &= \TGr(-\complexconjugate{\lambda}_{r,i})\,\Tb_{r,i}^{\complexconjugatetranspose}, \label{eq:H2OptimalityCondition_input}\\%
        \Tc_{r,i}^{\complexconjugatetranspose}\,\TG(-\complexconjugate{\lambda}_{r,i}) &= \Tc_{r,i}^{\complexconjugatetranspose}\,\TGr(-\complexconjugate{\lambda}_{r,i}), \label{eq:H2OptimalityCondition_output}\\%
        \Tc_{r,i}^{\complexconjugatetranspose}\left(\left.\frac{\diff}{\diff s} \TG(s)\right|_{s=-\complexconjugate{\lambda}_{r,i}}\right)\Tb_{r,i}^{\complexconjugatetranspose} &= \Tc_{r,i}^{\complexconjugatetranspose}\left(\left.\frac{\diff}{\diff s} \TGr(s)\right|_{s=-\complexconjugate{\lambda}_{r,i}}\right)\Tb_{r,i}^{\complexconjugatetranspose} \label{eq:H2OptimalityCondition_twosided}%
    \end{align}%
\end{subequations}%
for $i = 1,\,\ppp\,,\,q$.%
\end{theorem}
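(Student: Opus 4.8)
The plan is to regard the poles $\lambda_{r,i}$ together with the rank-one residue directions $\Tb_{r,i}$ and $\Tc_{r,i}$ as the free parameters of the strictly proper model ${\TGrs=\sum_{i=1}^{\ro}\frac{\Tc_{r,i}\Tb_{r,i}}{s-\lambda_{r,i}}}$, and to impose first-order stationarity of the squared error $\cost\assign\norm{\TG-\TGr}{\Htwo}^2$. Because only a strictly proper function has finite $\Htwo$ norm, I would first replace $\TGs$ by its strictly proper part $\TG^{\strictlyproper}(s)$ from \Cref{eq:DAE_transfer_function}; both functions are then stable and strictly proper and admit pole--residue expansions. Writing $\cost=\normHtwo{\TG}^2-2\Re\innerproductHtwo{\TG}{\TGr}+\normHtwo{\TGr}^2$, the first term is constant, so it suffices to analyze the parameter dependence of the remaining two.

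The key identity is the pole--residue form of the $\Htwo$ inner product. Starting from $\innerproductHtwo{\TG}{\TGr}=\frac{1}{2\pi}\int_{-\infty}^{\infty}\trace\!\left(\TG(\imath\omega)\,\TGr(\imath\omega)^{\complexconjugatetranspose}\right)\diff\omega$ and substituting the partial-fraction form of $\TGr$, the integrand of each summand has a single pole at $s=-\complexconjugate{\lambda}_{r,i}$ in the open right half-plane, where $\TG$ is analytic by asymptotic stability. Closing the contour in that half-plane and using the residue theorem gives
\begin{equation*}
    \innerproductHtwo{\TG}{\TGr}=\sum_{i=1}^{\ro}\Tc_{r,i}^{\complexconjugatetranspose}\,\TG(-\complexconjugate{\lambda}_{r,i})\,\Tb_{r,i}^{\complexconjugatetranspose},
\end{equation*}
and the same computation with $\TG$ replaced by $\TGr$ expresses $\normHtwo{\TGr}^2$. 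This turns $\cost$ into an explicit function of $\lambda_{r,i}$, $\Tb_{r,i}$, $\Tc_{r,i}$ whose entire $s$-dependence enters through samples of $\TG$ and $\TGr$ at the mirror images $-\complexconjugate{\lambda}_{r,i}$.

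I would then compute the gradient with Wirtinger calculus, treating each direction and its conjugate (and $\lambda_{r,i}$, $\complexconjugate{\lambda}_{r,i}$) as independent. Differentiating with respect to $\Tc_{r,i}^{\complexconjugatetranspose}$ isolates the right factor and equates $\TG(-\complexconjugate{\lambda}_{r,i})\Tb_{r,i}^{\complexconjugatetranspose}$ with $\TGr(-\complexconjugate{\lambda}_{r,i})\Tb_{r,i}^{\complexconjugatetranspose}$, which is \eqref{eq:H2OptimalityCondition_input}; by symmetry the derivative with respect to $\Tb_{r,i}$ yields the left-tangential condition \eqref{eq:H2OptimalityCondition_output}. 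Here it is essential that the contributions of $\normHtwo{\TGr}^2$ entering through the inner residues are holomorphic in the differentiated variable and hence annihilated by the Wirtinger derivative, leaving exactly the ROM sample.

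The main obstacle is the stationarity condition in the pole $\lambda_{r,i}$, since $\lambda_{r,i}$ occurs both explicitly in the residue denominator and implicitly in the evaluation point $-\complexconjugate{\lambda}_{r,i}$. Differentiating therefore generates terms containing $\frac{\diff}{\diff s}\TG$ and $\frac{\diff}{\diff s}\TGr$ evaluated at $-\complexconjugate{\lambda}_{r,i}$, and one must combine the contributions from $\innerproductHtwo{\TG}{\TGr}$ and $\normHtwo{\TGr}^2$ and invoke the already-derived conditions \eqref{eq:H2OptimalityCondition_input} and \eqref{eq:H2OptimalityCondition_output} to cancel the non-derivative pieces, so that only the bitangential Hermite term \eqref{eq:H2OptimalityCondition_twosided} survives. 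Keeping track of the rank-one structure $\Tc_{r,i}\Tb_{r,i}$ and of the conjugations is where the bookkeeping is delicate; the rest is routine residue calculus.
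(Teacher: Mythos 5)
The paper itself contains no proof of this theorem: it is imported verbatim from the cited reference \cite{VanDooren2008}, so there is no internal argument to compare against. Your proposal is, in outline, the standard derivation from that literature, and it runs on exactly the machinery the paper later introduces anyway: your pole--residue identity for $\innerproductHtwo{\TG}{\TGr}$ is precisely \Cref{thm:Description_of_the_Htwo_inner_product_via_moments}, and the strategy of expanding $\normHtwo{\TG-\TGr}^2$, reducing everything to samples of $\TG$ and $\TGr$ at the mirror points $-\complexconjugate{\lambda}_{r,i}$, and imposing first-order (Wirtinger) stationarity in the residue directions and poles is correct and does yield \Cref{eq:H2OptimalityCondition_input}--\Cref{eq:H2OptimalityCondition_twosided}.

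Two points of detail. First, your opening move of replacing $\TG$ by $\TG^{\strictlyproper}$ needs one more sentence, because the conclusions are stated for $\TG$, not $\TG^{\strictlyproper}$, and the two differ at the evaluation points by $\TP(-\complexconjugate{\lambda}_{r,i})$. The resolution is that every admissible candidate $\widehat{\TG}_\rsys$ is strictly proper, so the cost $\normHtwo{\TG-\widehat{\TG}_\rsys}$ is finite for some (equivalently, every) candidate only if the polynomial part vanishes identically; under the implicit hypothesis that the local minimum is a genuine finite minimum, this forces $\TG=\TG^{\strictlyproper}$, and the conditions you prove for $\TG^{\strictlyproper}$ are literally the stated ones. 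Without this observation the argument proves a slightly different statement. Second, the difficulty you anticipate in the pole derivative does not actually materialize: in the pole--residue parametrization, $\innerproductHtwo{\TG}{\TGr}$ depends on $\complexconjugate{\lambda}_{r,i}$ only through the evaluation point, the dependence of $\normHtwo{\TGr}^2$ on the unconjugated pole $\lambda_{r,i}$ is holomorphic and hence annihilated by $\partial/\partial\complexconjugate{\lambda}_{r,i}$, and the conjugate term $\overline{\innerproductHtwo{\TG}{\TGr}}$ drops out entirely; what survives is exactly the bitangential Hermite condition \Cref{eq:H2OptimalityCondition_twosided}, with no non-derivative pieces to cancel and no need to invoke \Cref{eq:H2OptimalityCondition_input} and \Cref{eq:H2OptimalityCondition_output}. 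Neither point invalidates the approach; the first should be added, the second merely simplifies your plan.
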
%
Next, we demonstrate that also the duality between the projection matrices satisfying \Cref{eq:input_Krylov} and \Cref{eq:output_Krylov} and the solutions of particular generalized Sylvester equations holds, see \cite{Gallivan2004,morVan04,Wolf2014} for ODEs. To that end, we first introduce different representations for tangential interpolation data used throughout this contribution.%
\begin{definition}[Right tangential interpolation data]\label{def:right interpolation data}%
Given a DAE system \Cref{eq:LTI_DAE_Definition}, consider a set of distinct interpolation frequencies $\sigma_1,\ppp,\sigma_\ro\tin\Complex$, which do not belong to $\Lambda(\TE,\TA)$, and corresponding nonzero right tangential directions $\Tr_1,\,\ppp\,,\,\Tr_\ro\tin\Complex^{\is}$.
We refer to the set of pairs $\left\{\left(\sigma_i,\Tr_i\right)\right\}_{i=1}^{\ro}$ as the \emph{right tangential interpolation data}, as they define the right tangential interpolation conditions of the form \Cref{eq:tangential_interpolation_conditions:1}. An equivalent representation for the interpolation data is given by the matrices%
\begin{subequations}%
    \begin{align}%
        \TS^\prim &= \diag\left(\sigma_1,\,\ppp\,,\,\sigma_\ro\right)\in\Complex^{\ro\times\ro},\label{eq:Sp}\\%
        \TR ^{\prim}&=\left[\Tr_1,\,\ppp\,,\,\Tr_\ro\right] \in \Complex^{\is\times\ro}.\label{eq:Rp}%
    \end{align}%
    \label{eq:SpRp}%
\end{subequations}%
\end{definition}%
\begin{remark}%
All considerations in the following have a dual counterpart in terms of \emph{left tangential interpolation data} $\left\{\left(\mu_j,\Tl_j\right)\right\}_{j=1}^{\ro}$ with $\mu_1,\ppp,\mu_\ro\tin\Complex\setminus\Lambda(\TE,\TA)$ and $\Tl_1,\ppp,\Tl_\ro\tin\Complex^{\os}$. As the discussion is analogous, we omit it here for brevity.%
\end{remark}%
It will be convenient in the remainder of the paper to characterize the right tangential interpolation data not in terms of the special structured matrices in \Cref{eq:SpRp} but in terms of matrices $\TS$ and~$\TR$ that are connected to $\TS^\prim$ and $\TR^\prim$ by means of a~similarity transformation.%
\begin{definition}\label{thm:S}%
Given a DAE system \Cref{eq:LTI_DAE_Definition}, consider a diagonalizable matrix $\TS\tin\Complex^{\ro\times\ro}$ with distinct eigenvalues $\sigma_1,\,\ppp\,,\,\sigma_\ro$ satisfying $\Lambda(\TS)\cap\Lambda(\TE,\TA)\teq\emptyset$ and a matrix $\TR\tin\Complex^{\is\times\ro}$. Let $\TT\tin\Complex^{\ro\times\ro}$ be a~regular matrix such that $\TT \TS \TT^{-1} = \diag\left(\sigma_1,\,\ppp\,,\,\sigma_\ro\right)$. If all columns of $\TR \TT^{-1}\teq\left[\Tr_1,\,\ppp\,,\,\Tr_\ro\right]$ are nonzero, then the matrices $\TS$ and $\TR $ are an equivalent representation of the right interpolation data $\left\{\left(\sigma_i,\Tr_i\right)\right\}_{i=1}^{\ro}$ as in \Cref{def:right interpolation data}.%
\end{definition}%
%
%
%
%
%
\begin{remark}\label{remark:S_R_controllability}%
It can be shown that if $\TS$ and $\TR$ satisfy the assumptions in \Cref{thm:S}, then the pair $\left(-\TS^\complexconjugatetranspose,\TR ^{\complexconjugatetranspose}\right)$ is controllable, see \cite[Theorem~3.17, Corollary~3.18 and Theorem 4.33 part (iii)]{Seiwald2016}. This is worth mentioning at this point, as the results in \Cref{sec:pork} will often make use of this property.%
\end{remark}%
The representation of the right interpolation data in terms of the matrices $\TS$ and $\TR$ becomes of practical importance when parametrizing the corresponding projection matrix $\TV$ in terms of \emph{generalized Sylvester equations.}%
\begin{theorem}[Sylvester equation for right tangential interpolation of DAEs]%
\label{thm:Assembly_Krylov_to_Sylvester}%
Consider a DAE system \Cref{eq:LTI_DAE_Definition}. Let right tangential interpolation data $\left\{\left(\sigma_i,\Tr_i\right)\right\}_{i=1}^{\ro}$ as in \Cref{def:right interpolation data} be given. Then any projection matrix $\TV\tin\Complex^{\fo\times\ro}$ satisfying \Cref{eq:input_Krylov} solves the generalized sparse-dense Sylvester equation%
\begin{equation}%
    \TA \TV - \TE \TV \TS - \TB\TR = \Tzero \label{eq:Sylvester}%
\end{equation}%
with appropriate choice of $\TS\tin\Complex^{\ro\times\ro}$ and $\TR\tin\Complex^{\is\times\ro}$.%
\end{theorem}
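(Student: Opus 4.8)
The plan is to rewrite the subspace condition \Cref{eq:input_Krylov} column by column, to assemble the $\ro$ resulting vector identities into one matrix equation that carries the \emph{diagonal} data $\TS^\prim,\TR^\prim$ of \Cref{def:right interpolation data}, and then to pass to the general representation $\TS,\TR$ of \Cref{thm:S} through a single similarity transformation. The only genuine difficulty is to certify that this transformation is invertible.

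First I would set $\Tv_i\assign(\TA-\sigma_i\TE)^{-1}\TB\Tr_i$ for $i=1,\ppp,\ro$; these are well defined because $\sigma_i\notin\Lambda(\TE,\TA)$ keeps $\TA-\sigma_i\TE$ regular, and each satisfies $\TA\Tv_i-\sigma_i\TE\Tv_i=\TB\Tr_i$. Condition \Cref{eq:input_Krylov} places every $\Tv_i$ in $\Image(\TV)$, so there are coordinate vectors $\Tt_i\in\Complex^\ro$ with $\TV\Tt_i=\Tv_i$. Collecting them into $\TT_0\assign[\Tt_1,\ppp,\Tt_\ro]$ and writing $\TS^\prim=\diag(\sigma_1,\ppp,\sigma_\ro)$ and $\TR^\prim=[\Tr_1,\ppp,\Tr_\ro]$, stacking the $\ro$ identities $\TA\TV\Tt_i-\sigma_i\TE\TV\Tt_i=\TB\Tr_i$ columnwise yields
\begin{equation*}
    \TA\TV\TT_0 - \TE\TV\TT_0\TS^\prim - \TB\TR^\prim = \Tzero,
\end{equation*}
since the $i$-th column of $\TE\TV\TT_0\TS^\prim$ equals $\sigma_i\TE\TV\Tt_i=\sigma_i\TE\Tv_i$.

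The decisive step, and where I expect the main obstacle, is the regularity of the coordinate matrix $\TT_0$. From $\TV\TT_0=[\Tv_1,\ppp,\Tv_\ro]$ one sees that $\TT_0$ is invertible as soon as the $\ro$ shifted-resolvent vectors $\Tv_i$ are linearly independent; independence in turn forces $\Image(\TV)=\spn\{\Tv_1,\ppp,\Tv_\ro\}$ and hence the full column rank of $\TV$. I would derive this independence from the distinctness of the shifts $\sigma_i\notin\Lambda(\TE,\TA)$ together with the nonzero directions $\Tr_i$ --- the same nondegeneracy that underlies the controllability of $(-\TS^\complexconjugatetranspose,\TR^\complexconjugatetranspose)$ noted in \Cref{remark:S_R_controllability}.

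With $\TT_0$ invertible, I would right-multiply the displayed equation by $\TT_0^{-1}$ and define $\TS\assign\TT_0\TS^\prim\TT_0^{-1}$ and $\TR\assign\TR^\prim\TT_0^{-1}$, which delivers the asserted equation $\TA\TV-\TE\TV\TS-\TB\TR=\Tzero$ of \Cref{eq:Sylvester}. It remains to confirm that $(\TS,\TR)$ is an admissible representation in the sense of \Cref{thm:S}: the transformation $\TT_0^{-1}$ satisfies $\TT_0^{-1}\TS\TT_0=\TS^\prim=\diag(\sigma_1,\ppp,\sigma_\ro)$ and $\TR\TT_0=\TR^\prim=[\Tr_1,\ppp,\Tr_\ro]$, so $\TS$ is diagonalizable with the prescribed distinct eigenvalues and the original tangential directions are recovered, which completes the argument.
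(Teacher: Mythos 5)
Your construction is, up to bookkeeping, the paper's own proof run in reverse: the paper writes $\TV\teq\TV^\prim\TT$ and obtains $\TS\teq\TT^{-1}\TS^\prim\TT$, $\TR\teq\TR^\prim\TT$, whereas you write $\TV\TT_0\teq\TV^\prim$ and obtain $\TS\teq\TT_0\TS^\prim\TT_0^{-1}$, $\TR\teq\TR^\prim\TT_0^{-1}$; with $\TT\teq\TT_0^{-1}$ these are identical. Your columnwise assembly, the right-multiplication by $\TT_0^{-1}$, and the admissibility check against \Cref{thm:S} are all correct.

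The genuine problem lies in the step you yourself call decisive. Linear independence of the vectors $\Tv_i\teq(\TA-\sigma_i\TE)^{-1}\TB\,\Tr_i$ does \emph{not} follow from distinctness of the shifts and nonvanishing of the directions, and it cannot be deduced from the controllability of $(-\TS^\complexconjugatetranspose,\TR^\complexconjugatetranspose)$ in \Cref{remark:S_R_controllability}: that property concerns the data $(\TS,\TR)$ alone, while independence of the $\Tv_i$ depends on $(\TE,\TA,\TB)$ as well. Concretely, in the SISO case take a finite eigenvector $\TA\Tv\teq\lambda\,\TE\Tv$ and set $\Tb\teq(\TA-\sigma_1\TE)\Tv\teq(\lambda-\sigma_1)\TE\Tv$ with $\Tr_1\teq\Tr_2\teq 1$; then $(\TA-\sigma_1\TE)^{-1}\Tb\teq\Tv$ and $(\TA-\sigma_2\TE)^{-1}\Tb\teq\frac{\lambda-\sigma_1}{\lambda-\sigma_2}\,\Tv$, so the two Krylov vectors are parallel even though $\sigma_1\tneq\sigma_2$, the directions are nonzero, and $(-\TS^\complexconjugatetranspose,\TR^\complexconjugatetranspose)$ is controllable. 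In this degenerate situation the theorem itself breaks down for a full-column-rank $\TV\teq[\Tv,\;\Tw]$ satisfying \Cref{eq:input_Krylov}: if $\TS,\TR$ represented the data via a similarity $\TT$, right-multiplying \Cref{eq:Sylvester} by $\TT^{-1}$ would force $\TV\TT^{-1}\teq[\Tv_1,\,\Tv_2]$, which is impossible since the left-hand side has rank two and the right-hand side rank one. So the independence cannot be proved from the stated hypotheses; it is an additional (generic) assumption. The paper does not prove it either, but it sidesteps the issue by phrasing the argument for ``any other basis of the same subspace'' as $\TV^\prim$, i.e., it builds $\dim\spn\lbrace\Tv_1,\ppp,\Tv_\ro\rbrace\teq\ro$ and $\Image(\TV)\teq\spn\lbrace\Tv_1,\ppp,\Tv_\ro\rbrace$ into the standing hypotheses on $\TV$. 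If you state that assumption explicitly, the rest of your proof goes through verbatim.
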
%
\begin{proof}%
The proof is obtained by construction. Let%
\begin{equation*}%
    \TV_i^\prim = (\TA-\sigma_i\TE)^{-1}\TB\,\Tr_i, \quad i = 1,\ppp,\ro.%
\end{equation*}%
Then we obtain from%
\begin{equation*}%
    \TA \TV_i^\prim -\sigma_i\TE \TV_i^\prim = \TB\,\Tr_i, \quad i = 1,\ppp,\ro,%
\end{equation*}%
that the so-called \emph{primitive basis} $\TV^{\prim}\teq\left[\TV_1^\prim,\,\ppp\,,\,\TV_\ro^{\prim}\right]$ satisfies%
\begin{equation*}%
\TA \TV^{\prim} - \TE \TV^{\prim} \TS^\prim - \TB \TR^\prim = \Tzero%
\end{equation*}%
with $\TS^\prim$ and $\TR^\prim$ as in \Cref{eq:SpRp}. Any other basis of the same subspace can be obtained through transformation $\TV \teq \TV^{\prim}\TT$ with a~regular matrix $\TT\tin\Complex^{\ro\times\ro}$, which ultimately results in the Sylvester equation \Cref{eq:Sylvester} with $\TS\teq\TT^{-1}\TS^\prim\TT$ and $\TR\teq\TR^\prim\TT$.%
%
%
%
%
\end{proof}%
A converse relation also holds true underlying the strong relationship between the solution of \Cref{eq:Sylvester} and the projection matrices satisfying \Cref{eq:input_Krylov}.%
\begin{theorem}%
Consider a DAE system \Cref{eq:LTI_DAE_Definition}. Let right tangential interpolation data $\left\{\left(\sigma_i,\Tr_i\right)\right\}_{i=1}^{\ro}$ be given in form of matrices $\TS$ and $\TR$ satisfying the assumptions in \Cref{thm:S}. Furthermore, let the reduced-order model \Cref{eq:ROM_by_projection} result from a projection with a~matrix $\TV\tin\Complex^{\fo\times\ro}$ solving the Sylvester equation \Cref{eq:Sylvester} and a~matrix $\TW\tin\Complex^{\fo\times\ro}$ chosen such that $\Lambda(\TS)\cap\Lambda(\TEr,\TAr)\teq\emptyset$. Then the reduced-order model \Cref{eq:ROM_by_projection} satisfies the right interpolation conditions \Cref{eq:tangential_interpolation_conditions:1}.%
\end{theorem}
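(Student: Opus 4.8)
The plan is to prove this converse by reversing the construction used in \Cref{thm:Assembly_Krylov_to_Sylvester}: I would show that any $\TV$ solving the Sylvester equation \Cref{eq:Sylvester} necessarily has a column space containing the rational Krylov directions of \Cref{eq:input_Krylov}, after which the interpolation property is nothing more than an application of \Cref{thm:tangential_interpolation}(i). The whole argument thus reduces the non-projective (Sylvester) hypothesis back to the projective (Krylov) one.

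First I would diagonalize $\TS$. By the assumptions collected in \Cref{thm:S} there is a regular $\TT\tin\Complex^{\ro\times\ro}$ with $\TT\TS\TT^{-1}=\diag(\sigma_1,\ppp,\sigma_\ro)=\TS^\prim$ and with all columns of $\TR\TT^{-1}=[\Tr_1,\ppp,\Tr_\ro]=\TR^\prim$ nonzero. Setting $\TV^\prim\teq\TV\TT^{-1}$ and multiplying \Cref{eq:Sylvester} on the right by $\TT^{-1}$, I would use the identity $\TV\TS\TT^{-1}=\TV^\prim\TS^\prim$ to pass to the primitive form $\TA\TV^\prim-\TE\TV^\prim\TS^\prim-\TB\TR^\prim=\Tzero$, i.e. exactly the equation appearing in the proof of \Cref{thm:Assembly_Krylov_to_Sylvester}.

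Next, because $\TS^\prim$ is diagonal, reading off the $i$-th column yields $(\TA-\sigma_i\TE)\TV_i^\prim=\TB\Tr_i$. Since $\sigma_i\tin\Lambda(\TS)$ and $\Lambda(\TS)\cap\Lambda(\TE,\TA)\teq\emptyset$, the matrix $\TA-\sigma_i\TE$ is invertible, so $\TV_i^\prim=(\TA-\sigma_i\TE)^{-1}\TB\Tr_i$. As $\TT^{-1}$ is regular we have $\Image(\TV^\prim)\teq\Image(\TV)$, so each Krylov direction lies in $\Image(\TV)$; this is precisely the input condition \Cref{eq:input_Krylov}.

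Finally I would invoke \Cref{thm:tangential_interpolation}(i). Its hypotheses ask that the $\sigma_i$ lie outside $\Lambda(\TE,\TA)$ — guaranteed by \Cref{thm:S} — and outside $\Lambda(\TEr,\TAr)$ — guaranteed by the assumption $\Lambda(\TS)\cap\Lambda(\TEr,\TAr)\teq\emptyset$ together with $\Lambda(\TS)\teq\{\sigma_1,\ppp,\sigma_\ro\}$. The theorem then delivers $\TG(\sigma_i)\Tr_i\teq\TGr(\sigma_i)\Tr_i$ for $i=1,\ppp,\ro$, which are the right interpolation conditions \Cref{eq:tangential_interpolation_conditions:1}. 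The only point requiring genuine care is the bookkeeping of the similarity transformation $\TT$, so that the data $(\sigma_i,\Tr_i)$ extracted from the diagonalized equation coincide with those of \Cref{def:right interpolation data}; once this change of basis is tracked, the argument is a direct reversal of \Cref{thm:Assembly_Krylov_to_Sylvester} and presents no essential obstacle.
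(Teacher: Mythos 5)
Your proposal is correct and follows essentially the same route as the paper's own proof: pass to the primitive form via the similarity transformation $\TT$ from \Cref{thm:S}, read off the columns $\TV_i^\prim=(\TA-\sigma_i\TE)^{-1}\TB\,\Tr_i$, note $\Image(\TV^\prim)=\Image(\TV)$, and conclude via \Cref{thm:tangential_interpolation}. If anything, you are slightly more explicit than the paper in verifying the invertibility of $\TA-\sigma_i\TE$ and the spectral hypotheses of \Cref{thm:tangential_interpolation}, which is fine.
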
%
\begin{proof}%
Due to the condition ${\Lambda(\TS)\cap\Lambda(\TE,\TA)\teq\emptyset}$, the solution of \Cref{eq:Sylvester} exists and is unique \cite{Chu1987}. By \Cref{thm:S} there exists a nonsingular matrix $\TT$ such that $\TT \TS \TT^{-1} \teq \TS^\prim \teq \diag(\sigma_1,\ppp,\sigma_\ro)$ and $\TR\TT^{-1}\teq\TR ^\prim \teq \begin{bmatrix}\Tr_1,\ppp,\Tr_\ro\end{bmatrix}$. Therefore, \Cref{eq:Sylvester} is equivalent to%
\begin{equation*}%
    \TA \TV^\prim - \TE \TV^\prim \TS^\prim - \TB\TR^\prim = \Tzero,
\end{equation*}%
where $\TV ^\prim \teq \TV\TT^{-1}$. Then each column of $\TV^\prim$ has the form%
\begin{equation*}%
    \TV_i^\prim = (\TA-\sigma_i\TE)^{-1}\TB\,\Tr_i, \quad i = 1,\ppp,\ro.%
\end{equation*}%
Since $\TV_i^\prim\in \Image(\TV)$, it follows from \Cref{thm:tangential_interpolation} that the right tangential interpolation conditions \Cref{eq:tangential_interpolation_conditions:1} are fulfilled.%
\end{proof}%
Note that the interpolation conditions \Cref{eq:tangential_interpolation_conditions} can be extended to tangentially match also high order derivatives of $\TGs$ by spanning appropriate input/output tangential Krylov subspaces \cite{Villemagne1987,Grimme1997,Gallivan2004,Beattie2017}. Also in this case, the duality to a~respective Sylvester equation holds. For the sake of brevity and simpler notation, we do not present the results in this most general form but refer the interested reader to the more detailed discussion in \cite{Seiwald2016}.\par%
The results of \Cref{thm:tangential_interpolation} may mislead to think that there is no difference in approximating ODEs and DAEs by tangential interpolation. As discussed in \cite{Gugercin2013,Benner2017}, achieving tangential interpolation for DAEs may in fact not be enough to obtain acceptable approximations. Indeed, interpolation by itself cannot in general guarantee that the polynomial part $\TPs$ of the transfer function $\TG(s)$ is matched exactly by the reduced-order model. This may lead to a nonzero or even unbounded error at $s\tto\infty$, and hence an error system which is not in $\Htwo$, see, e.g., the example in \cite{Gugercin2013}.\par%
For this reason, a correct reduction of DAEs should always ensure $\TPs\teq\TP_\rsys(s)$. This can be achieved by decomposing the reduction problem into two subproblems: tangential interpolation of the strictly proper part $\TG^{\strictlyproper}(s)$ given in \Cref{eq:RealizationofstrictlyproperPart} and preservation of the polynomial part $\TPs$ given in \Cref{eq:RealizationofimproperPart}, as illustrated in \Cref{fig:overall_procedure}. To this aim, similarly to \cite{Gugercin2013}, we make use of the spectral projectors \Cref{eq:Spectral_Projectors_finite} and \Cref{eq:Spectral_Projectors_infinite} to act separately on realizations of the strictly proper part $\TG^\strictlyproper (s)$ and the polynomial part $\TPs$. As shown in \cite[Theorem 3.1]{Gugercin2013}, tangential interpolation of the strictly proper part can be achieved through projection of the basis matrices $\TV$ and $\TW$ as in \Cref{eq:input_Krylov} and \Cref{eq:output_Krylov} according to $\TV^{\strictlyproper} \teq \Specrightfin\TV$ and $\TW^{\strictlyproper} \teq (\Specleftfin)^{\transpose}\TW$.\par
\begin{figure}[!ht]%
    \centering%
    \input{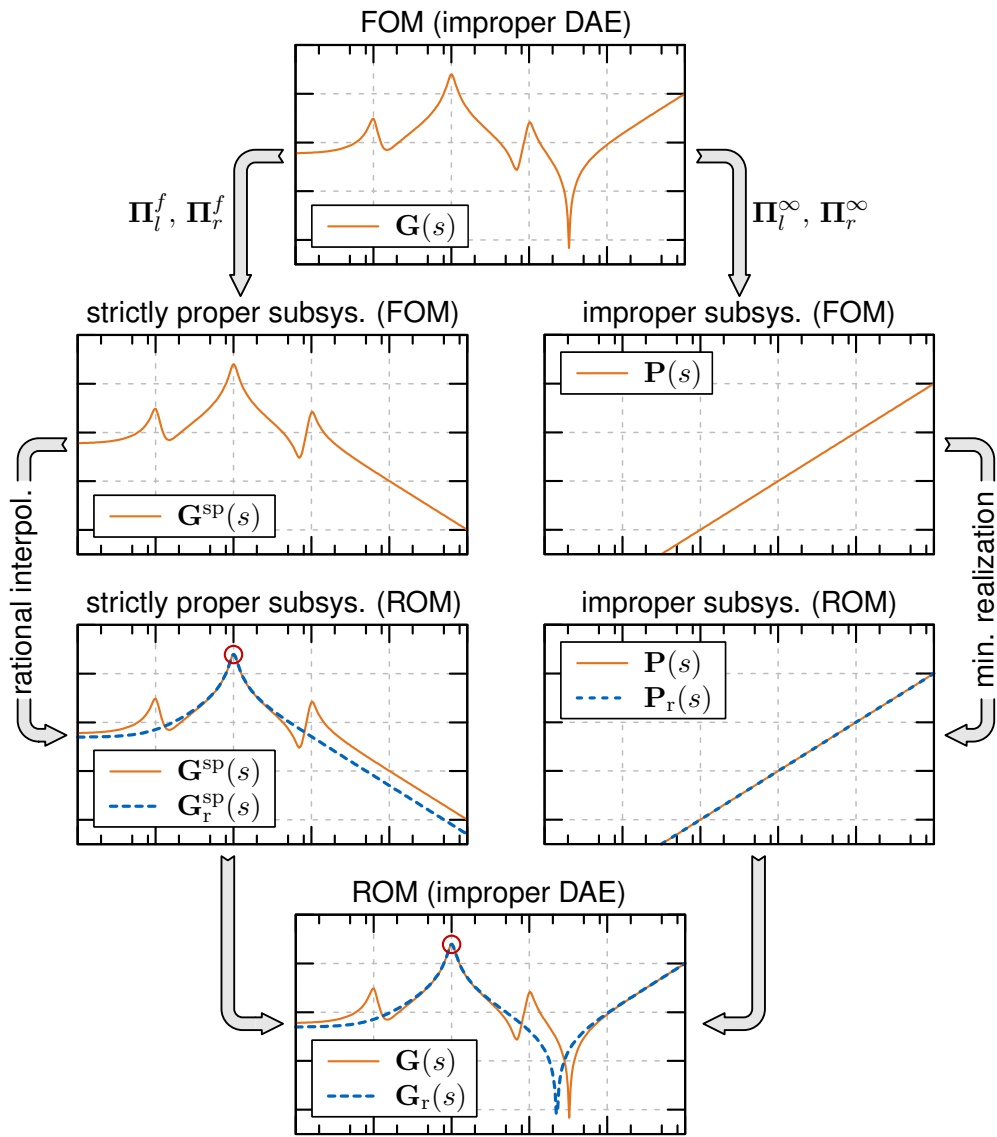}%
    \caption{Sketch of DAE-aware model reduction. The spectral projectors are used to act on the strictly proper and polynomial parts separately. While tangential interpolation is used to approximate $\TG^\strictlyproper (s)$, balanced truncation is used to determine a minimal realization of $\TPs$. In a~final step, both subsystems are combined to the overall reduced model $\TGr(s)$. The interpolation frequency is indicated by a circle.\ToEditor{layout=spanning two-columns preferred; online-version=colored; print-version=grayscale}}%
    \label{fig:overall_procedure}%
\end{figure}%
In this contribution, we will derive a pseudo-optimal rational Krylov algorithm \cite{Wolf2013,Wolf2014} to approximate the strictly proper part $\TG^\strictlyproper (s)$ by $\TGr^\strictlyproper(s)$ with a~realization $(\TEr^\strictlyproper,\,\TAr^\strictlyproper,\,\TBr^\strictlyproper,\TCr^\strictlyproper,\Tzero)$, see \Cref{sec:pork}. For the polynomial part $\TPs$, the balanced truncation approach proposed in \cite{Stykel2004} can be applied to find its minimal realization $(\TEr^\improper,\,\TAr^\improper,\,\TBr^\improper,\,\TCr^\improper,\,\TDr^\improper)$ by projection%
\begin{equation*}%
    \TEr^\improper = (\TW^\improper)^{\transpose}\TE\TV^\improper,\;%
    \TAr^\improper = (\TW^\improper)^{\transpose}\TA\TV^\improper,\;%
    \TBr^\improper = (\TW^\improper)^{\transpose}\TB,\;%
    \TCr^\improper = \TC\TV^\improper,\;%
    \TDr^\improper = \TD.%
\end{equation*}%
The computation of the projection matrices $\TW^\improper$ and $\TV^\improper$ involves the numerical solution of two projected discrete-time Lyapunov equations. Exploiting characteristic properties of DAEs, the solutions of these equations can be determined by solving at most $(m+p)\nu$ (generally sparse) linear systems \cite{Stykel2008}. Note that there exists a~direct relation between the index $\nu$ of the DAE system \Cref{eq:LTI_DAE_Definition} and the dimension of the minimal realization of the polynomial part given by ${\dim (\TEr^\improper)\leq \min\lbrace\nu\,m,\,\nu\,p,\,n_\infinite\rbrace}$, see \cite{Stykel2004}. Since $n_\infinite$ is often rather high, while $\nu\, m$ and $\nu\, p$ are small in most technical applications, this approach can already achieve a significant reduction of the system dimension without any approximation error. Finally, the overall reduced-order model $\TGrs\teq\TGr^\strictlyproper (s)+\TP(s)$ is then obtained by
\begin{equation*}%
    \TEr = \begin{bmatrix} \TEr^\strictlyproper & \Tzero \\ \Tzero & \TEr^\improper\end{bmatrix},\;%
    \TAr = \begin{bmatrix} \TAr^\strictlyproper & \Tzero \\ \Tzero & \TAr^\improper\end{bmatrix},\;%
    \TBr = \begin{bmatrix} \TBr^\strictlyproper \\ \TBr^\improper\end{bmatrix},\;%
    \TCr = \begin{bmatrix} \TCr^\strictlyproper &  \TCr^\improper\end{bmatrix},\;%
    \TDr = \TDr^\improper.%
\end{equation*}%
Within a projective framework, this is equivalent to composing the overall projection matrices $\TV \teq \begin{bmatrix}\TV^\strictlyproper & \TV^\improper\end{bmatrix}$ and $\TW \teq \begin{bmatrix}\TW^\strictlyproper & \TW^\improper\end{bmatrix}$ as proposed in \cite{Gugercin2013}.%
\subsection{A Non-Projective Framework}%
\label{subsec:MOR_non-projective}%
For later discussions, it is important to notice that for ODEs the problem of tangential interpolation can be tackled from a non-projective perspective as well, see \cite{Wolf2014,Astolfi2007,Astolfi2010Mar,Astolfi2010Dec}. This can be done by introducing families of reduced transfer functions.%
\begin{definition}%
\label{dfn:Familiy_of_reduced_transfer_functions}%
Given a~DAE system \Cref{eq:LTI_DAE_Definition} and right tangential interpolation data $\TS\tin\Complex^{\ro\times\ro}$ and ${\TR\tin\Complex^{\is\times\ro}}$ as in \Cref{thm:S}, let $\TV$ solve the Sylvester equation \Cref{eq:Sylvester} and let $\TF\tin\Complex^{q\times m}$ satisfy%
\begin{equation}%
    \Lambda(\TS)\cap\Lambda(\TS+\TF\TR)\teq\emptyset.%
    \label{eq:condition_for_F}%
\end{equation}%
Then a \emph{family of reduced transfer functions} $\TGFs$ is defined as $\TGFs\teq\TransferF$ with%
\begin{equation}%
    \label{eq:Familiy_of_reduced_transfer_functions_F_system matrices}%
    \TEF=\TI_q\;,\qquad\TAF=\TS+\TF\,\TR\;,\qquad\TBF=\TF\;,\qquad\TCF=\TC\,\TV\;,%
\end{equation}%
and free parameter matrix $\TF$.%
\end{definition}%
Note that the reduced model \Cref{eq:Familiy_of_reduced_transfer_functions_F_system matrices} is not computed through the Pretrov-Galerkin projection as in \Cref{eq:ROM_by_projection}. In fact, the existence of a corresponding matrix $\TW$, such that a projection according to \Cref{eq:ROM_by_projection} leads to \Cref{eq:Familiy_of_reduced_transfer_functions_F_system matrices}, is not required. The following theorem establishes that for arbitrary choice of $\TF$ satisfying \Cref{eq:condition_for_F}, $\TGFs$ is a right tangential interpolant of $\TGs$ with respect to the interpolation data $\TS$ and $\TR$. It can be proved analogously to the ODE case \cite{Astolfi2007,Wolf2014}.%
\begin{theorem}%
\label{thm:parametrizedFamily_tangential_interpolation}%
Consider a~DAE system \Cref{eq:LTI_DAE_Definition} with transfer function $\TGs$ and let right tangential interpolation data $\TS\tin\Complex^{\ro\times\ro}$, $\TR\tin\Complex^{\is\times\ro}$ be given as in \Cref{thm:S}. For any matrix $\TF\tin\Complex^{q\times m}$ satisfying \Cref{eq:condition_for_F}, let $\TGFs$ denote the family of reduced transfer functions according to \Cref{dfn:Familiy_of_reduced_transfer_functions}. Then%
\begin{equation}%
    \TG(\sigma_i)\Tr_i = \TGF(\sigma_i)\Tr_i\qquad i=1,\,\ppp\,,\,q\;,%
    \label{eq:Tangential interpolation of SysF}%
\end{equation}%
for arbitrary $\TF$.%
\end{theorem}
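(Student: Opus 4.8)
The plan is to verify the interpolation conditions \eqref{eq:Tangential interpolation of SysF} directly, imitating the ODE argument of \cite{Astolfi2007,Wolf2014}: I would evaluate $\TG(\sigma_i)\Tr_i$ and $\TGF(\sigma_i)\Tr_i$ separately and show that they coincide, one interpolation frequency at a time. The organizing object is, for each $i$, the right eigenvector of $\TS$ belonging to $\sigma_i$. By \Cref{thm:S} the matrix $\TT$ diagonalizes $\TS$, so writing $\T{t}_i\assign\TT^{-1}\T{e}_i$ for the $i$-th column of $\TT^{-1}$ gives $\TS\,\T{t}_i=\sigma_i\,\T{t}_i$ and, since $\TR\TT^{-1}=[\Tr_1,\ppp,\Tr_\ro]$, also $\TR\,\T{t}_i=\Tr_i$. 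These two identities are the only structural facts the argument needs.

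First I would treat the full-order side. Right-multiplying the Sylvester equation \eqref{eq:Sylvester} by $\T{t}_i$ and substituting $\TS\T{t}_i=\sigma_i\T{t}_i$ and $\TR\T{t}_i=\Tr_i$ yields $(\TA-\sigma_i\TE)\,\TV\T{t}_i=\TB\Tr_i$. Because $\sigma_i\notin\Lambda(\TE,\TA)$ the pencil is invertible at $\sigma_i$, so $\TV\T{t}_i=(\TA-\sigma_i\TE)^{-1}\TB\Tr_i=-(\sigma_i\TE-\TA)^{-1}\TB\Tr_i$, whence $\TC\TV\,\T{t}_i=-\TC(\sigma_i\TE-\TA)^{-1}\TB\Tr_i$, i.e. minus the rational part of $\TGs$ evaluated at $\sigma_i$ along $\Tr_i$. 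This is exactly the statement that the columns of $\TV$ realize the input-Krylov property \eqref{eq:input_Krylov}, now read off the Sylvester equation rather than supplied by \Cref{thm:tangential_interpolation}.

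Next I would treat the reduced side. With $\TAF=\TS+\TF\TR$ and $\TBF=\TF$ from \eqref{eq:Familiy_of_reduced_transfer_functions_F_system matrices}, the same two eigenvector relations give $(\sigma_i\TI_\ro-\TAF)\,\T{t}_i=\sigma_i\T{t}_i-\TS\T{t}_i-\TF\TR\T{t}_i=-\TF\Tr_i$. The hypothesis \eqref{eq:condition_for_F}, namely $\Lambda(\TS)\cap\Lambda(\TS+\TF\TR)=\emptyset$, guarantees $\sigma_i\notin\Lambda(\TAF)$, so $\sigma_i\TI_\ro-\TAF$ is invertible and $(\sigma_i\TI_\ro-\TAF)^{-1}\TF\Tr_i=-\T{t}_i$. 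Multiplying by $\TCF=\TC\TV$ then gives $\TGF(\sigma_i)\Tr_i=\TC\TV(\sigma_i\TI_\ro-\TAF)^{-1}\TF\Tr_i=-\TC\TV\T{t}_i=\TC(\sigma_i\TE-\TA)^{-1}\TB\Tr_i$, which equals $\TG(\sigma_i)\Tr_i$ in the strictly proper setting ($\TD=\Tzero$) relevant here. Since $i\in\{1,\ppp,\ro\}$ and the admissible $\TF$ were arbitrary, this establishes \eqref{eq:Tangential interpolation of SysF}.

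I expect no genuine analytic obstacle: once the eigenvector $\T{t}_i$ is in hand the proof is pure linear algebra. The points requiring care are bookkeeping rather than depth, namely keeping the resolvent sign $(\sigma_i\TE-\TA)^{-1}=-(\TA-\sigma_i\TE)^{-1}$ straight and invoking each invertibility hypothesis at the right moment ($\sigma_i\notin\Lambda(\TE,\TA)$ for the full pencil, \eqref{eq:condition_for_F} for the reduced one); both are exactly the assumptions built into \Cref{thm:S} and \Cref{dfn:Familiy_of_reduced_transfer_functions}. The one conceptual subtlety worth flagging is that $\TGFs$ is \emph{not} obtained by a Petrov--Galerkin projection, so \Cref{thm:tangential_interpolation} cannot be applied off the shelf; the input-Krylov property must instead be extracted from the Sylvester equation as above. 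Finally, since $\TGF$ is strictly proper, the identity matches the rational part $\TC(s\TE-\TA)^{-1}\TB$ of $\TGs$ at $\sigma_i$ along $\Tr_i$, which in the present framework, where the polynomial and feedthrough contributions are preserved separately, is precisely the desired condition.
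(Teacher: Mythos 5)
Your proof is correct and is essentially the argument the paper intends: the paper gives no explicit proof of this theorem (it defers to the ODE case of Astolfi and Wolf), and your eigenvector computation $\TS\,\T{t}_i=\sigma_i\T{t}_i$, $\TR\,\T{t}_i=\Tr_i$ applied to the Sylvester equation and to $\TAF=\TS+\TF\TR$ is exactly that argument carried over to singular $\TE$, with invertibility of $\TA-\sigma_i\TE$ supplied by $\sigma_i\notin\Lambda(\TE,\TA)$ rather than by regularity of $\TE$. Your handling of the feedthrough is also the right reading of the statement: the identity obtained is $\TGF(\sigma_i)\Tr_i=\TC(\sigma_i\,\TE-\TA)^{-1}\TB\,\Tr_i$, which coincides with $\TG(\sigma_i)\Tr_i$ in the strictly proper setting (no $\TD$ term) in which the theorem is actually invoked in the paper.
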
%
%
%
%
\section{The \HtwoText\ Inner Product of DAEs}%
\label{sec:H2innerproduct}%
In this section, we present a formulation of the \HTIP\ of two strictly proper transfer functions in terms of the system matrices of their DAE realizations and the solution of \gSE s. Note that these results are not restricted to model reduction and may be used in different contexts. Although similar relations have been investigated in \cite{Stykel2006} and \cite{Wolf2014}, the formulation of the \HTIP\ of two strictly proper DAEs via \gSE s appears to be new. Its main advantage lies in the direct relationship between the inner product and the system matrices and will be the key for the results of \cref{sec:pork}.\par%
As a first step, we define $\Ltwopm$ as the Hilbert space of matrix-valued functions $\TF: \imath\,\Real \tto \Complex^{p \times m}$ that have bounded $\Ltwopm$-norm%
\begin{equation}%
    \normLtwopm{\TF}\assign\left(\frac{1}{2\,\pi}\int_{-\infty}^{\infty}\normFrobenius{\TF(\imath\,\omega)}^2 \,\diff \omega\right)^{\frac{1}{2}}\;,%
    \label{eq:L2-Definition}%
\end{equation}%
where $\normFrobenius{\cdot}$ denotes the Frobenius matrix norm. Furthermore, let $\Htwo$ denote the subspace of $\Ltwopm$ containing all rational functions that are analytic in the closed right half of the complex plane. It is important to note that a~rational function needs to be strictly proper in order to be in $\Htwo$, as otherwise the integral in \cref{eq:L2-Definition} is unbounded. This is why in the DAE context, the discussion must be restricted to the strictly proper transfer functions only. Also note that {asymptotic stability is a sufficient but not necessary condition for a DAE system to have the transfer function in $\Htwo$ \cite{Stykel2006}.\par%
Next we consider two asymptotically stable \DAEsys s with the strictly proper transfer functions $\TG(s)=\TC(s\TE-\TA)^{-1}\TB$ and $\TGH(s)=\TCH(s\TEH-\TAH)^{-1}\TBH$ and the impulse responses $\TGtimedomain(t)$ and $\TGHtimedomain(t)$ defined as inverse Laplace transforms of $\TG(s)$ and $\TGH(s)$, respectively. Then $\TG, \TGH\in\Htwo$ and the \HTIP\ of $\TG$ and $\TGH$ is defined as%
\begin{equation*}
    \innerproductHtwo{\TG}{\TGH}\assign\frac{1}{2\,\pi}\int_{-\infty}^{\infty}\trace\left(     \TG(\imath\,\omega)\,\TGH^{\complexconjugatetranspose}(\imath\,\omega) \right) \diff\omega     = \int_{0}^{\infty}\trace\left(\TGtimedomain(t)\,\TGHtimedomain(t)^{\complexconjugatetranspose}\right) \dt\;.
\end{equation*}%
The second relation follows immediately from Parseval's theorem \cite{Rudin1987}. Note that we use the notation in \WCF\ in order to exploit the strict properness of $\TGs$ and $\TGHs$. The following theorem is one of the main results of this contribution. It establishes a~formulation of the \HTIP\ of two transfer functions in terms of their DAE realizations.%
\begin{theorem}\label{thm:GenSylEq_for_X_and_Y}%
Consider two asymptotically stable DAE systems with the same number of input and output variables. Let their realizations be $\Realization$ and $\RealizationH$, respectively, and let their transfer functions $\TGs$ and $\TGHs$ be strictly proper. Furthermore, let $\Specleftfin,\,\Specrightfin$ and $\SpecHleftfin,\,\SpecHrightfin$ be the spectral projectors onto the left and right deflating subspaces of $\pencilEA$ and $\pencilEHAH$, respectively, corresponding to the finite eigenvalues. Then the \HTIP\ of $\TG$ and $\TGH$ is given by%
\begin{equation}%
    \label{eq:GenSylEq_innerProduct_X_and_Y}%
    \innerproductHtwo{\TG}{\TGH} = \trace \left(\TC\,\TX\,\TCH^{\complexconjugatetranspose}\right)=\trace \left(\TB^{\complexconjugatetranspose}\,\TY\,\TBH\right)\;,%
\end{equation}%
where $\TX$ and $\TY$ are the unique solutions of the projected Sylvester equations%
\begin{subequations}%
    \begin{align}%
        \label{eq:GenSylEq_for_X_DAEDAE}%
        &\TA\,\TX\,\TEH^{\complexconjugatetranspose}+\TE\,\TX\,\TAH^{\complexconjugatetranspose}+\Specleftfin\,\TB\,\TBH^{\complexconjugatetranspose}\,\SpecHleftfinCT = \Tzero\;, &\TX=\Specrightfin\,\TX\,\SpecHrightfinCT\;,\\%
        \label{eq:GenSylEq_for_Y_DAEDAE}%
        &\TA^{\complexconjugatetranspose}\,\TY\,\TEH+\TE^{\complexconjugatetranspose}\,\TY\,\TAH+\SpecrightfinCT\,\TC^{\complexconjugatetranspose}\,\TCH\,\SpecHrightfin = \Tzero\;,%
        &\TY=\SpecleftfinCT\,\TY\,\SpecHleftfin\;.%
    \end{align}%
    \label{eq:GenSylEq_for_DAEDAE}%
\end{subequations}%
\end{theorem}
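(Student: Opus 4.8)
The plan is to reduce the statement to the classical \HtwoText\ inner-product formula for \ODE s via the \WCF\ and then transport the resulting small Sylvester equations back to the original coordinates using the spectral projectors. Throughout, I work in the coordinates \Cref{eq:E_and_A_in_Weierstrass}, writing $\TEW=\TP\TE\TQ$, $\TAW=\TP\TA\TQ$, $\TBW=\TP\TB$, $\TCW=\TC\TQ$ for the first system and analogously with $\TPH,\TQH,\TJH,\TNH$ for the second.

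First I would exploit strict properness. Since $\TGs$ and $\TGHs$ are strictly proper, the nilpotent blocks $\TN,\TNH$ contribute nothing and only the finite (\ODE) blocks $(\TIfin,\TJ,\TBfin,\TCfin)$ and $(\TIHfin,\TJH,\TBHfin,\TCHfin)$ carry the dynamics; consequently the impulse responses are $\TGtimedomain(t)=\TCfin\,e^{\TJ t}\,\TBfin$ and $\TGHtimedomain(t)=\TCHfin\,e^{\TJH t}\,\TBHfin$ for $t\geq 0$. Substituting these into the time-domain expression for $\innerproductHtwo{\TG}{\TGH}$ and interchanging trace and integral gives $\innerproductHtwo{\TG}{\TGH}=\trace\left(\TCfin\,\TXff\,\TCHfin^{\complexconjugatetranspose}\right)$ with the finite cross-Gramian $\TXff\assign\int_{0}^{\infty}e^{\TJ t}\,\TBfin\TBHfin^{\complexconjugatetranspose}\,e^{\TJH^{\complexconjugatetranspose}t}\,\dt$. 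A standard differentiation-under-the-integral argument then shows that $\TXff$ is the unique solution of the small Sylvester equation $\TJ\,\TXff+\TXff\,\TJH^{\complexconjugatetranspose}+\TBfin\TBHfin^{\complexconjugatetranspose}=\Tzero$; both convergence of the integral and uniqueness follow from asymptotic stability, as $\Lambda(\TJ)$ and $\Lambda(-\TJH^{\complexconjugatetranspose})$ lie in the open left and right half-planes, respectively, and are therefore disjoint.

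The decisive step is to lift $\TXff$ back to the original coordinates. I would set $\TX\assign\TQ\,\diag(\TXff,\Tzero)\,\TQH^{\complexconjugatetranspose}$ with the block partition inherited from the \WCF. Inserting $\Specrightfin=\TQ\,\diag(\TIfin,\Tzero)\,\TQ^{-1}$, $\Specleftfin=\TP^{-1}\,\diag(\TIfin,\Tzero)\,\TP$, the \WCF\ factorizations of $\TE,\TA,\TB,\TC$, and the $\Hsys$-counterparts, a direct computation shows that the block-diagonal structure of $\TEW,\TAW$ makes $\TN,\TNH$ act only on the zero blocks of $\diag(\TXff,\Tzero)$. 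Hence the mixed products $\TA\TX\TEH^{\complexconjugatetranspose}$ and $\TE\TX\TAH^{\complexconjugatetranspose}$ collapse to the finite block, the projected right-hand side $\Specleftfin\TB\TBH^{\complexconjugatetranspose}\SpecHleftfinCT$ reduces to $\TBfin\TBHfin^{\complexconjugatetranspose}$ in that block, and \Cref{eq:GenSylEq_for_X_DAEDAE} becomes exactly the small Sylvester equation above; the constraint $\TX=\Specrightfin\TX\SpecHrightfinCT$ holds by construction. The same substitution yields $\trace(\TC\TX\TCH^{\complexconjugatetranspose})=\trace(\TCfin\TXff\TCHfin^{\complexconjugatetranspose})$, which matches the inner product. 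Uniqueness in the original coordinates follows because the constraint confines $\TX$ to the finite--finite block, where the equation is the uniquely solvable small Sylvester equation.

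The second representation $\innerproductHtwo{\TG}{\TGH}=\trace(\TB^{\complexconjugatetranspose}\TY\TBH)$ is obtained dually from the observability cross-Gramian $\TY_{\finite\finite}\assign\int_{0}^{\infty}e^{\TJ^{\complexconjugatetranspose}t}\,\TCfin^{\complexconjugatetranspose}\TCHfin\,e^{\TJH t}\,\dt$, which solves $\TJ^{\complexconjugatetranspose}\TY_{\finite\finite}+\TY_{\finite\finite}\TJH+\TCfin^{\complexconjugatetranspose}\TCHfin=\Tzero$; lifting by $\TY\assign\TP^{\complexconjugatetranspose}\,\diag(\TY_{\finite\finite},\Tzero)\,\TPH$ reproduces \Cref{eq:GenSylEq_for_Y_DAEDAE} together with the constraint $\TY=\SpecleftfinCT\TY\SpecHleftfin$, and gives $\trace(\TB^{\complexconjugatetranspose}\TY\TBH)=\trace(\TBfin^{\complexconjugatetranspose}\TY_{\finite\finite}\TBHfin)$. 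Since the systems are real and the \WCF\ uses a real \JCF, all quantities are real and the two trace expressions agree. I expect the lift-back bookkeeping to be the main obstacle: one must verify that the spectral projectors land in precisely the positions dictated by \Cref{eq:GenSylEq_for_X_DAEDAE} and \Cref{eq:GenSylEq_for_Y_DAEDAE} (left projectors multiplying $\TB\TBH^{\complexconjugatetranspose}$, a right-projector constraint on $\TX$), which hinges on the clean separation between the $\TP$-side and $\TQ$-side transformations of the \WCF.
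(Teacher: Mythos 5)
Your proof is correct and follows essentially the same route as the paper's: transform both pencils to the \WCF, use strict properness to express the impulse responses through the finite blocks, identify the finite--finite cross-Gramian $\TXff$ with its small Sylvester equation, and use the projector constraint to confine $\TX$ (resp. $\TY$) to that block, with the dual argument for the second trace formula. The only difference is organizational --- the paper decouples the given projected equation into its four block equations and then computes the inner product, whereas you compute the inner product first and lift the small solution back to the original coordinates --- which does not change the substance of the argument.
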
%
\begin{proof}%
Let the pencils $\pencilEA$ and $\pencilEHAH$ be transformed into the Weierstra{\ss} canonical form \Cref {eq:E_and_A_in_Weierstrass} and%
\begin{equation}%
    \label{eq:E_and_A_in_WeierstrassH}%
    \TPH\,\TEH\,\TQH=\begin{bmatrix} \TIHfin & \Tzero \\ \Tzero & \TNH\end{bmatrix}\;, \quad \TPH\,\TAH\,\TQH=\begin{bmatrix} \TJH & \Tzero \\ \Tzero & \TIHinf\end{bmatrix}\;,%
\end{equation}%
respectively, and let the input and output matrices be transformed as in \Cref{eq:Transformation_into_WCF_all_matrices} and%
\begin{equation}%
    \label{eq:Transformation_into_WCF_all_matricesH}%
    \TPH\,\TBH=\begin{bmatrix} \TBHfin \\ \TBHinf \end{bmatrix}\;,\quad \TCH\,\TQH=\begin{bmatrix} \TCHfin &\TCHinf \end{bmatrix}.%
\end{equation}%
Partitioning the matrix%
\begin{equation*}%
\TQ^{-1}\TX\TQH^{-*} = \begin{bmatrix} \TXff & \TXfi \\ \TXif & \TXii \end{bmatrix}%
\end{equation*}%
such that $\TXff\in\mathbb{C}^{n_f\times n_{\Hsys f}}$ and $\TXii\in\mathbb{C}^{n_{\infty}\times n_{\Hsys \infty}}$, the first equation in \Cref{eq:GenSylEq_for_X_DAEDAE} can be decoupled as%
\begin{subequations}%
    \begin{align}%
    \label{eq:Proof_GenSylEq_for_X_in_Weierstrass_Xff}%
    \TJ\,\TXff+\TXff\,\TJ^{\complexconjugatetranspose}_\Hsys+\TBfin\,\TB^{\complexconjugatetranspose}_{\Hsys \finite}&=\Tzero\;,\\%
    \label{eq:Proof_GenSylEq_for_X_in_Weierstrass_Xif}%
    \TXif+\TN\,\TXif\,\TJ^{\complexconjugatetranspose}_\Hsys&=\Tzero\;,\\%
    \label{eq:Proof_GenSylEq_for_X_in_Weierstrass_Xfi}%
    \TJ\,\TXfi\,\TN^{\complexconjugatetranspose}_\Hsys+\TXfi&=\Tzero\;,\\%
    \label{eq:Proof_GenSylEq_for_X_in_Weierstrass_Xii}%
    \TXii\,\TN^{\complexconjugatetranspose}_\Hsys+\TN\,\TXii&=\Tzero\;.%
    \end{align}%
    \label{eq:Proof_GenSylEq_for_X_in_Weierstrass}%
\end{subequations}%
Since both DAE systems are asymptotically stable, the Sylvester equation \Cref{eq:Proof_GenSylEq_for_X_in_Weierstrass_Xff} has a unique solution given by%
\begin{equation*}%
    \TXff\teq\int_0^{\infty} e^{\TJ\,t}\, \TBfin\,\TB^{\complexconjugatetranspose}_{\Hsys \finite}\,e^{\TJ^{\complexconjugatetranspose}_\Hsys\,t}  \dt.%
\end{equation*}%
The second equation in \Cref{eq:GenSylEq_for_X_DAEDAE} implies $\TXif\teq\Tzero$, $\TXfi\teq\Tzero$ and  $\TXii\teq\Tzero$, which obviously satisfy equations \Cref{eq:Proof_GenSylEq_for_X_in_Weierstrass_Xif}-\Cref{eq:Proof_GenSylEq_for_X_in_Weierstrass_Xii}.\par%
Furthermore, using again \Cref {eq:E_and_A_in_Weierstrass}, \Cref {eq:E_and_A_in_WeierstrassH}, \Cref{eq:Transformation_into_WCF_all_matrices}, \Cref{eq:Transformation_into_WCF_all_matricesH} and exploiting the strict properness of $\TGs$ and $\TGHs$, the impulse responses can be represented as%
\begin{equation*}%
    \TGtimedomain(t)=\TCfin\,e^{\TJ\,t}\,\TBfin\;,\qquad \TGHtimedomain(t)=\TCHfin\,e^{\TJ_\Hsys\,t}\,\TBHfin\;, \qquad t\geq 0\;.%
\end{equation*}%
Then we have%
\begin{equation*}%
    \begin{aligned}%
        \innerproductHtwo{\TG}{\TGH}&=\int_{0}^{\infty} \trace \left( \TCfin\,e^{\TJ\,t}\,\TBfin\,\TBHfin^\complexconjugatetranspose\,e^{\TJ_\Hsys^\complexconjugatetranspose\,t}\,\TCHfin^\complexconjugatetranspose \right)\dt\\%
        & = \trace \left(\TCfin \Bigl( \int_{0}^{\infty} e^{\TJ\,t}\,\TBfin\,\TBHfin^\complexconjugatetranspose\,e^{\TJ_\Hsys^\complexconjugatetranspose\,t}\dt\Bigr)\TCHfin^\complexconjugatetranspose \right)\\%
        & = \trace \Bigl(\begin{bmatrix} \TCfin &\TCinf \end{bmatrix} \begin{bmatrix} \TXff & \Tzero \\ \Tzero & \Tzero \end{bmatrix}\begin{bmatrix} \TCHfin^\complexconjugatetranspose \\ \TCHinf^\complexconjugatetranspose \end{bmatrix}\Bigr)= \trace \left( \TC\, \TX\, \TCH^\complexconjugatetranspose\right).%
    \end{aligned}%
\end{equation*}%
Thus, \Cref{eq:GenSylEq_for_X_DAEDAE} holds. Similar considerations lead to the dual result \Cref{eq:GenSylEq_for_Y_DAEDAE}.%
\end{proof}%
\begin{remark}%
A special case covered by \cref{thm:GenSylEq_for_X_and_Y} and of particular importance in \cref{sec:pork} is given when only one of the two systems involved in the inner product is a DAE, e.g., if $\TEH$ is nonsingular. In this case, the additional constraints in \Cref{eq:GenSylEq_for_X_DAEDAE} and \Cref{eq:GenSylEq_for_Y_DAEDAE} on $\TX$ and $\TY$ to ensure uniqueness of the solution are trivially satisfied.%
\end{remark}%
It is worth noting that $\TX$ and $\TY$ coincide with the proper controllability and observability Gramians of a DAE for the special case $\Realization\teq\RealizationH$. This shows that \eqref{eq:GenSylEq_for_X_DAEDAE} is a generalization of the results in \cite{Stykel2006}, where the squared \HTN\ of $\TGs$ reads%
\begin{equation}%
    \normHtwo{\TG}^2=\innerproductHtwo{\TG}{\TG}=\trace(\TC\,\TX\,\TC^{\complexconjugatetranspose})=\trace(\TB^{\complexconjugatetranspose}\,\TY\,\TB)\;.%
\end{equation}%
To conclude this section, finally note that the \HTIP\ of two strictly proper transfer functions $\TGs$ and $\TGMs$ can also be characterized in terms of the pole-residue representation of $\TGMs$ by applying the residue theorem.%
\begin{theorem}[\cite{Wolf2014}] \label{thm:Description_of_the_Htwo_inner_product_via_moments}%
Consider two DAE systems with the transfer functions $\TG,\TGM\tin\Htwo$. Let $\TGMs \teq \sum\limits_{i=1}^{\ro}\frac{\Tc_{\Msys i}\Tb_{\Msys i}}{s-\lambda_{Mi}}$ be the pole-residue representation, where $\lambda_{Mi}$ denote the poles of $\TGMs$ and $\,\Tb_{\Msys i}^\complexconjugatetranspose$ and $\Tc_{\Msys i}$ denote the corresponding input and output residual directions. Then the \HTIP\ of $\TG$ and $\TGM$ is given by%
\begin{equation}%
    \label{eq:Htwo_inner_product_via_moments_summation}%
    \innerproductHtwo{\TG}{\TGM}=\sum_{i=1}^q \trace\left(\TG(-\complexconjugate{\lambda}_{\Msys i})\,\Tb_{\Msys i}^{\complexconjugatetranspose}\,\Tc_{\Msys i}^\complexconjugatetranspose\right)\;. %
\end{equation}%
\end{theorem}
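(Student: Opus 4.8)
The plan is to compute the defining frequency-domain integral of the \HTIP\ by means of the residue theorem, turning the pole-residue structure of $\TGMs$ into point evaluations of $\TGs$. Starting from
\[
    \innerproductHtwo{\TG}{\TGM}=\frac{1}{2\,\pi}\int_{-\infty}^{\infty}\trace\left(\TG(\imath\,\omega)\,\TGM(\imath\,\omega)^{\complexconjugatetranspose}\right)\diff\omega,
\]
I would first rewrite the conjugate-transposed factor along the imaginary axis. Using $\complexconjugate{\imath\,\omega}=-\imath\,\omega$ and $(\Tc_{\Msys i}\Tb_{\Msys i})^{\complexconjugatetranspose}=\Tb_{\Msys i}^{\complexconjugatetranspose}\Tc_{\Msys i}^{\complexconjugatetranspose}$, the pole-residue representation gives $\TGM(\imath\,\omega)^{\complexconjugatetranspose}=\Phi(\imath\,\omega)$ with the rational matrix function
\[
    \Phi(s)\assign-\sum_{i=1}^{\ro}\frac{\Tb_{\Msys i}^{\complexconjugatetranspose}\Tc_{\Msys i}^{\complexconjugatetranspose}}{s+\complexconjugate{\lambda}_{\Msys i}}.
\]
The crucial observation is that conjugation reflects the poles across the imaginary axis: since $\TGM\tin\Htwo$ forces every $\lambda_{\Msys i}$ into the open left half-plane, the poles of $\Phi$ at $-\complexconjugate{\lambda}_{\Msys i}$ all lie in the open right half-plane, where $\TGs$ is analytic.

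Next I would substitute $s=\imath\,\omega$ to obtain the contour integral $\tfrac{1}{2\pi\imath}\int_{-\imath\infty}^{\imath\infty}\trace\bigl(\TG(s)\,\Phi(s)\bigr)\,\diff s$ over the imaginary axis, and close the contour by a large semicircle in the right half-plane. Both $\TGs$ and $\Phi(s)$ are strictly proper, so the integrand decays like $\order(|s|^{-2})$ and the arc contribution vanishes as its radius tends to infinity; this is the only genuinely analytic step and uses that $\TG\tin\Htwo$ guarantees strict properness even though $\TGs$ is a DAE transfer function. The enclosed singularities are exactly the simple poles of $\Phi$ at $s=-\complexconjugate{\lambda}_{\Msys i}$, with residues $\mathrm{Res}_{\,s=-\complexconjugate{\lambda}_{\Msys i}}\Phi = -\,\Tb_{\Msys i}^{\complexconjugatetranspose}\Tc_{\Msys i}^{\complexconjugatetranspose}$ (assuming, as the stated representation does, that the poles are distinct).

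Finally, applying the residue theorem and accounting for the clockwise orientation of the right-half-plane contour, I would collect
\[
    \innerproductHtwo{\TG}{\TGM}=-\sum_{i=1}^{\ro}\mathrm{Res}_{\,s=-\complexconjugate{\lambda}_{\Msys i}}\trace\bigl(\TG(s)\,\Phi(s)\bigr)=\sum_{i=1}^{\ro}\trace\left(\TG(-\complexconjugate{\lambda}_{\Msys i})\,\Tb_{\Msys i}^{\complexconjugatetranspose}\,\Tc_{\Msys i}^{\complexconjugatetranspose}\right),
\]
which is precisely \Cref{eq:Htwo_inner_product_via_moments_summation}. The main obstacle is sign bookkeeping: the minus sign built into $\Phi$, the minus from closing clockwise in the right half-plane, and the minus in each residue must cancel correctly, and one must deliberately close to the right so as to pick up the (simple) poles of $\Phi$ rather than the possibly intricate, index-$\nu$ pole structure of the DAE $\TGs$. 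Verifying the vanishing arc and the distinctness assumption on the poles are the remaining points that need care.
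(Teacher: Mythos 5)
Your proof is correct and follows exactly the route the paper indicates for this result: the paper states the theorem without a written-out proof, citing \cite{Wolf2014} and remarking only that it follows ``by applying the residue theorem'' to the defining frequency-domain integral with the pole-residue representation of $\TGM$. Your argument -- reflecting the conjugated poles into the open right half-plane, closing the contour there (where $\TG\tin\Htwo$ is analytic and strict properness makes the arc vanish), and collecting the simple-pole residues with the correct clockwise sign -- is precisely that argument, carried out with the sign bookkeeping done correctly and under the same distinct-pole assumption the theorem statement makes.
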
%
This result can be extended to systems with multiple poles as well. As the notation becomes more involved, we refer to \cite{Wolf2014,Seiwald2016} for details.%
%
%
%
\section{\HtwoText\ Pseudo-Optimal Reduction of DAEs}%
\label{sec:pork}%
Using the results of \Cref{sec:H2innerproduct}, it is possible to extend the concept of \HTPO\space reduction and the related algorithms to DAEs. This type of reduction is strongly related to rational interpolation and \HTO\ reduction as introduced in \Cref{sec:MOR}. The resulting ROM will interpolate the FOM at the mirror images of the reduced poles tangentially along the reduced input residues. As we will discuss, these are necessary and sufficient conditions for the \emph{global} optimum within a subspace of $\Htwo$ defined by the set of reduced poles and input residues. Optimality within such a~subspace is what motivates the name ``pseudo''. As the original work \cite{Wolf2014} requires the regularity of $\TE$, several steps have to be adapted to deal with a singular descriptor matrix $\TE$. We start our discussion by introducing the notion of \HTPOy.%
\subsection{\HtwoText\ Pseudo-Optimality}%
\label{sec:pork_Htwo_Pseudo_optimality}%
In contrast to \HTO\ reduction \cite{Gugercin2008,Gugercin2013}, which is aimed at finding a \emph{locally} optimal reduced-order model amongst all reduced models of prescribed order $\ro$, we consider the problem of finding the \emph{global} optimum within a specific \subspace\ of $\Htwo$ defined as follows.%
\begin{definition}%
\label{dfn:Definition_Subspace_of_transfer_functions}%
Let $\TAM\in\Complex^{\ro\times\ro}$ and $\TBM\in\Complex^{\is\times\ro}$ be given such that $\TAM$ is diagonalizable and has all eigenvalues in the open left half-plane. We denote by $\TransferSubspaceXY{\TAM}{\TBM}$ the set of asymptotically stable, strictly proper transfer functions $\TG_{\Msys}\tin\Htwo$ such that%
\begin{equation}%
\TransferSubspaceXY{\TAM}{\TBM}\assign\left\lbrace \TG_{\Msys}(s)\;\left|\;\exists\, \TC_\Msys\in\Complex^{p\times \ro}: \TG_{\Msys}(s)= \TC_\Msys\,(s\,\TI_q-\TAM)^{-1}\,\TBM\right.\right\rbrace\subset\Htwo\;.%
\end{equation}%
\end{definition}%
\begin{remark}\label{remark:subspace diagonal}%
    It holds $\TransferSubspaceXY{\TAM}{\TBM}\teq\TransferSubspaceXY{\widehat{\TA}_\Msys}{\widehat{\TB}_\Msys}$, where $\widehat{\TA}_\Msys \teq \TT^{-1}\TAM\TT \teq \diag(\lambda_{\Msys 1},\,\ppp\,,\,\lambda_{\Msys q})$ and $\widehat{\TB}_\Msys \teq \TT^{-1} \TBM$ for a nonsingular matrix $\TT$. Therefore, all transfer functions in $\TransferSubspaceXY{\TAM}{\TBM}$ share the same poles and input residues.%
\end{remark}%
The reduced-order model within this subset which yields the smallest approximation error will be denoted as the \HtwoText\ pseudo-optimum \cite{Wolf2014}.%
\begin{definition}%
\label{dfn:Definition_Pseudo_Optimal_ROM}%
Consider a DAE system \Cref{eq:LTI_DAE_Definition} with a~strictly proper transfer function $\TGs$. Let  $\TAM\tin\Complex^{\ro\times\ro}$ be diagonalizable and  $\TBM\tin\Complex^{\ro\times\is}$. A reduced-order model $\TGrs$ is called \emph{\HTPO\ with respect to} $\TransferSubspaceXY{\TAM}{\TBM}$ if it satisfies%
\begin{equation}%
    \TGr(s)=\arg\;\min_{\TG_\Msys\in\TransferSubspaceXY{\TAM}{\TBM}}\normHtwo{\TG-\TG_\Msys}\;.%
\end{equation}%
\end{definition}%
Using the Hilbert projection theorem \cite{Rudin1987}, \Cref{thm:GenSylEq_for_X_and_Y} and  \Cref{thm:Description_of_the_Htwo_inner_product_via_moments}, it is possible to derive necessary and sufficient conditions for \HTPOy. This result is an~extension of that in {\cite{Beattie2012}, \cite[Theorem~4.19]{Wolf2014}} to the DAE case.%
\begin{theorem}%
\label{lem:PseudoOptimality}%
Consider a DAE system \Cref{eq:LTI_DAE_Definition} with a~strictly proper transfer function $\TG\tin\Htwo$. Let matrices $\TAM\tin\Complex^{\ro\times\ro}$ and $\TBM\tin\Complex^{\ro\times\is}$ be given such that $\TAM$ is diagonalizable. Then $\TGrs$ is the unique \HTPO\ reduced transfer function with respect to $\TransferSubspaceXY{\TAM}{\TBM}$ if and only if
\begin{equation}
\label{eq:Condition_PseudoOptimality_InnerProduct}
\innerproductHtwo{\TG-\TGr}{\TG_\Msys}=0\quad \mbox{for all} \quad\TG_\Msys\in\TransferSubspaceXY{\TAM}{\TBM}
\end{equation}
or, equivalently,%
\begin{equation}
\label{eq:Condition_PseudoOptimality_Moments}
\left(\TG(-\complexconjugate{\lambda}_{\Msys i})-\TGr(-\complexconjugate{\lambda}_{\Msys i})\right)\Tb_{\Msys i}^\complexconjugatetranspose=\Tzero\;,\qquad i=1,\,\ppp\,,\,q\;,
\end{equation}
holds, %
where $\lambda_{\Msys i}$ and $\,\Tb_{\Msys i}^\complexconjugatetranspose$ are the poles and input residual directions of any ${\TG_\Msys\tin\TransferSubspaceXY{\TAM}{\TBM}}$.
\end{theorem}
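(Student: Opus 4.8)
The plan is to identify the pseudo-optimal reduced model with the orthogonal projection of $\TG$ onto $\TransferSubspaceXY{\TAM}{\TBM}$ in the Hilbert space $\Htwo$, and then to convert the resulting orthogonality relation into the pointwise moment conditions by means of the residue representation of the inner product.

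First I would verify that $\TransferSubspace\assign\TransferSubspaceXY{\TAM}{\TBM}$ is a closed linear subspace of $\Htwo$. Linearity is immediate, since $\TGM\teq\TCM(s\TI_q-\TAM)^{-1}\TBM$ is linear in the free parameter $\TCM$, so that $\TransferSubspace$ is the image of the linear map $\TCM\mapsto\TGM$; as $\TAM$ has all its eigenvalues in the open left half-plane, every such $\TGM$ is strictly proper and stable and thus belongs to $\Htwo$, where \Cref{thm:GenSylEq_for_X_and_Y} moreover confirms that the $\Htwo$ inner product is well defined for strictly proper DAE transfer functions. Because $\TransferSubspace$ has dimension at most $\os\,\ro$, it is finite-dimensional and hence closed.

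Next I would apply the Hilbert projection theorem \cite{Rudin1987}. For the closed subspace $\TransferSubspace$ and the fixed element $\TG\in\Htwo$, there is a unique minimizer $\TGr\in\TransferSubspace$ of $\normHtwo{\TG-\TGM}$, and it is characterized by $\innerproductHtwo{\TG-\TGr}{\TGM}\teq 0$ for all $\TGM\in\TransferSubspace$. Since \Cref{dfn:Definition_Pseudo_Optimal_ROM} requires the pseudo-optimum to lie in $\TransferSubspace$, this delivers at once existence, uniqueness, and the characterization \Cref{eq:Condition_PseudoOptimality_InnerProduct}. It then remains to show that \Cref{eq:Condition_PseudoOptimality_InnerProduct} is equivalent to the moment form \Cref{eq:Condition_PseudoOptimality_Moments}. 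Using the diagonalization $\TT^{-1}\TAM\TT\teq\diag(\lambda_{\Msys 1},\ppp,\lambda_{\Msys q})$ of \Cref{remark:subspace diagonal}, any $\TGM\in\TransferSubspace$ has the pole--residue form of \Cref{thm:Description_of_the_Htwo_inner_product_via_moments} with fixed input residual directions $\Tb_{\Msys i}$ (the rows of $\widehat{\TB}_\Msys\teq\TT^{-1}\TBM$) and output residual directions $\Tc_{\Msys i}$ (the columns of $\widehat{\TC}_\Msys\teq\TCM\TT$). Applying the residue formula \Cref{eq:Htwo_inner_product_via_moments_summation} to $\TG-\TGr$ and using the cyclic property of the trace gives
\[
\innerproductHtwo{\TG-\TGr}{\TGM}=\sum_{i=1}^{\ro}\Tc_{\Msys i}^\complexconjugatetranspose\left(\TG(-\complexconjugate{\lambda}_{\Msys i})-\TGr(-\complexconjugate{\lambda}_{\Msys i})\right)\Tb_{\Msys i}^\complexconjugatetranspose .
\]
As $\TCM$ ranges over $\Complex^{\os\times\ro}$, the columns $\Tc_{\Msys i}$ range independently over $\Complex^{\os}$, so the sum vanishes for every $\TGM\in\TransferSubspace$ precisely when each factor $\left(\TG(-\complexconjugate{\lambda}_{\Msys i})-\TGr(-\complexconjugate{\lambda}_{\Msys i})\right)\Tb_{\Msys i}^\complexconjugatetranspose$ is zero, which is \Cref{eq:Condition_PseudoOptimality_Moments}; by \Cref{remark:subspace diagonal} the data $\lambda_{\Msys i},\Tb_{\Msys i}$ are common to all of $\TransferSubspace$, so the conditions do not depend on the chosen $\TGM$.

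The step I expect to require the most care is this last decoupling: isolating the $\ro$ pointwise conditions from the single scalar orthogonality relation relies on the output residual directions $\Tc_{\Msys i}$ being genuinely free and mutually independent, which uses both the surjectivity of $\TCM\mapsto\TGM$ onto $\TransferSubspace$ and the diagonalizability of $\TAM$ (so that the inner product splits into a sum of simple first-order contributions). I would in particular note that even when $\TAM$ has repeated eigenvalues the columns $\Tc_{\Msys i}$ remain independent parameters, so selecting one nonzero at a time still isolates each individual condition.
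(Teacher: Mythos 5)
Your proof is correct, and it reaches the tangential conditions by a genuinely different route than the paper. Both you and the paper obtain \Cref{eq:Condition_PseudoOptimality_InnerProduct} from the Hilbert projection theorem, and your explicit check that $\TransferSubspaceXY{\TAM}{\TBM}$ is the image of the linear map $\TCM\mapsto\TCM(s\TI_q-\TAM)^{-1}\TBM$, hence a finite-dimensional and therefore closed subspace, is a detail the paper leaves implicit. The difference lies in the equivalence with \Cref{eq:Condition_PseudoOptimality_Moments}: the paper works with a DAE realization $(\TE_e,\TA_e,\TB_e,\TC_e)$ of the error system and invokes its new projected-Sylvester-equation formulation of the \HTIP\ from \Cref{thm:GenSylEq_for_X_and_Y}; after diagonalizing $\TAM$, the columns of the Sylvester solution are the resolvent vectors $\widehat{\TX}_i=(-\complexconjugate{\lambda}_{\Msys i}\,\TE_e-\TA_e)^{-1}\Spec^\finite_{e\idxleft}\,\TB_e\,\Tb_{\Msys i}^{\complexconjugatetranspose}$, so that $\TC_e\widehat{\TX}$ is exactly the row of tangential errors and the freedom of $\TCM$ finishes the argument. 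You instead bypass realizations, spectral projectors and Sylvester equations entirely by applying the pole-residue formula of \Cref{thm:Description_of_the_Htwo_inner_product_via_moments} to $\TG-\TGr$ and using cyclicity of the trace, $\innerproductHtwo{\TG-\TGr}{\TGM}=\sum_{i=1}^{\ro}\Tc_{\Msys i}^{\complexconjugatetranspose}\left(\TG(-\complexconjugate{\lambda}_{\Msys i})-\TGr(-\complexconjugate{\lambda}_{\Msys i})\right)\Tb_{\Msys i}^{\complexconjugatetranspose}$; the final ``freedom of the $\Tc_{\Msys i}$'' step is then identical to the paper's. Your route is shorter and more elementary, makes transparent that only the transfer functions (not their realizations) matter, and is in fact consistent with how the paper itself advertises the result, since its preamble cites both \Cref{thm:GenSylEq_for_X_and_Y} and \Cref{thm:Description_of_the_Htwo_inner_product_via_moments} as ingredients. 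What the paper's route buys is a demonstration of the DAE-specific Sylvester machinery that constitutes its main contribution, and it handles repeated (diagonalizable) eigenvalues of $\TAM$ with no extra comment, whereas \Cref{thm:Description_of_the_Htwo_inner_product_via_moments} is stated for a pole-residue representation with simple poles; in the repeated-eigenvalue case your argument needs the small supplementary remark, which you correctly flag, that one splits $\TGM$ into rank-one first-order terms and uses additivity of the inner product term by term.
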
%
\begin{proof}%
The relationship in \Cref{eq:Condition_PseudoOptimality_InnerProduct} follows directly from the Hilbert projection theorem on optimality within a subspace, see \cite{Rudin1987}. Based on this result and the new formulation of the \HTIP\ presented in \Cref{thm:GenSylEq_for_X_and_Y}, we can derive an equivalent characterization of \Cref{eq:Condition_PseudoOptimality_InnerProduct} in terms of tangential interpolatory conditions \Cref{eq:Condition_PseudoOptimality_Moments}.\par%
Denoting the error system by $\TG_e(s)\teq\TGs-\TGrs$ with a realization $(\TE_e,\TA_e,\TB_e,\TC_e)$, we obtain%
\begin{equation*}%
    \innerproductHtwo{\TG_e}{\TG_\Msys} = \trace\left(\TC_e\,\TX\,\TCM^{\complexconjugatetranspose}\right),%
\end{equation*}%
where $\TX$ solves the projected Sylvester equation%
\begin{equation}\label{eq:GenSylEq_for_X_ErrorMsys}%
    \TA_e\,\TX\,+\, \TE_e\,\TX\,\TAM^{\complexconjugatetranspose}+\Spec^\finite_{e\idxleft}\,\TB_e\,\TBM^{\complexconjugatetranspose} = \Tzero.%
\end{equation}%
Here, $\Spec^\finite_{e\idxleft}$ is the spectral projector onto the left subspace of $\lambda\,\TE_e-\TA_e$ corresponding to the finite eigenvalues. Since $\TAM$ is diagonalizable, there exists a~nonsingular matrix $\TT\tin\Complex^{\ro\times\ro}$ such that $\TT^{-1}\TAM\TT\teq \widehat{\TA}_{\Msys}\teq\diag(\lambda_{\Msys 1},\ppp,\lambda_{\Msys \ro})$. Then equation \Cref{eq:GenSylEq_for_X_ErrorMsys} can be written as%
\begin{equation}%
    \TA_e\,\widehat{\TX}\,+\,\TE_e\,\widehat{\TX}\,\widehat{\TA}_{\Msys}^{\complexconjugatetranspose}+\Spec^\finite_{e\idxleft}\,\TB_e\,\widehat{\TB}_{\Msys}^{\complexconjugatetranspose} = \Tzero,%
\end{equation}%
where $\widehat{\TX}\teq\TX\TT^{-\complexconjugatetranspose}$ and $\widehat{\TB}_{\Msys}\teq\TT^{-1}\TBM\teq\begin{bmatrix} \Tb_{\Msys 1}^{\complexconjugatetranspose}&\ppp& \Tb_{\Msys \ro}^{\complexconjugatetranspose}\end{bmatrix}^{\complexconjugatetranspose}$. Due to the diagonal structure of $\widehat{\TA}_\Msys$, every column $\widehat{\TX}_i$ of $\widehat{\TX}$ is given by%
\begin{equation*}%
    \widehat{\TX}_i = \left(-\complexconjugate{\lambda}_{\Msys i}\,\TE_e-\TA_e \right)^{-1}\Spec^\finite_{e\idxleft}\,\TB_e\,\Tb_{\Msys i}^{\complexconjugatetranspose}, \quad i=1,\ppp,\ro.%
\end{equation*}%
As the relation%
\begin{equation*}%
    0=\innerproductHtwo{\TG_e}{\TG_\Msys} = \trace(\TC_e\,\TX\,\TCM^{\complexconjugatetranspose}) = \trace(\TC_e\,\widehat{\TX}\,\widehat{\TC}_{\Msys}^{\complexconjugatetranspose})%
\end{equation*}
with $\widehat{\TC}_{\Msys}=\TC_{\Msys}\TT$ has to hold true for all $\TGM\tin\TransferSubspaceXY{\TAM}{\TBM}$, i.e., for all $\TCM\tin\Complex^{\os\times\ro}$, we conclude from%
\begin{equation}%
    \begin{aligned}%
        \TC_e\widehat{\TX} &= \TC_e \begin{bmatrix} \left(-\complexconjugate{\lambda}_{\Msys 1}\,\TE_e-\TA_e\right)^{-1}\Spec^\finite_{e\idxleft}        \,\TB_e\,\Tb_{\Msys 1}^{\complexconjugatetranspose} &\ppp & \left(-\complexconjugate{\lambda}_{\Msys \ro}\,\TE_e-\TA_e\right)^{-1}\Spec^\finite_{e\idxleft}\,\TB_e\,\Tb_{\Msys \ro}^{\complexconjugatetranspose}\end{bmatrix}\\%
        & = \begin{bmatrix} \TG_e\left(-\complexconjugate{\lambda}_{\Msys 1}\right)\,\Tb_{\Msys 1}^{\complexconjugatetranspose} &\ppp & \TG_e\left(-\complexconjugate{\lambda}_{\Msys \ro}\right)\,\Tb_{\Msys \ro}^{\complexconjugatetranspose}    \end{bmatrix}%
    \end{aligned}%
    \label{eq:Condition_PseudoOptimality_Proof}%
\end{equation}%
that \Cref{eq:Condition_PseudoOptimality_InnerProduct} is satisfied if and only if%
\begin{equation*}%
    \TG_e\left(-\complexconjugate{\lambda}_{\Msys i}\right)\,\Tb_{\Msys i}^\complexconjugatetranspose=\Tzero\;,\qquad i=1,\,\ppp\,,\,q\;.%
\end{equation*}%
Note that in \Cref{eq:Condition_PseudoOptimality_Proof} we used the strict properness of $\TGs$ and $\TGrs$.%
\end{proof}%
This result is central in that it describes \HTPO\ reduced models as tangential interpolants at the mirror images of the reduced eigenvalues with respect to the reduced input residual directions. In fact, it indicates how to construct \HTPO\ reduced models. Given reduced poles and input residual directions, we seek a reduced model satisfying the right tangential interpolation conditions \Cref{eq:Condition_PseudoOptimality_Moments}. Alternatively, given right tangential interpolation data $\TS$ and $\TR$, we seek a reduced model with respective poles and input residual directions. Therefore, the goal in the remainder of this section will be to develop an algorithm to construct such a reduced-order model.\par%
\begin{remark}\label{remark:HTPO_HTO}%
    Comparing \Cref{lem:PseudoOptimality} to \Cref{thm:H2optimality}, one can immediately see that the necessary and sufficient conditions \Cref{eq:Condition_PseudoOptimality_Moments} for \HTPOy\ correspond to the necessary conditions \Cref{eq:H2OptimalityCondition_input} for \HTOy. Therefore, every locally \HTO\ reduced-order model is also the global $\Htwo$ pseudo-optimum in the corresponding subspace. This relationship will be used in \Cref{sec:curedspark}, where the subspace $\TransferSubspaceXY{\TAM}{\TBM}$ will be optimized in order to obtain an \HTO\ reduced-order model through pseudo-optimal reduction.%
\end{remark}%
At this point, it still needs to be clarified how a model satisfying \Cref{eq:Condition_PseudoOptimality_Moments} can be constructed. It is well known that a transfer function $\TGrs$ allows an infinite amount of realizations. In the following, we will consider three particular ODE realizations of order $\ro$, namely $\SysM\teq\RealizationM$, $\SysF\teq\RealizationF$ and $\Sysr\teq\Realizationr$, each of which will play a specific role in the derivations. The key in the discussion will be to show that these systems are restricted system equivalent \cite{Mayo2007}, i.e., they share the same transfer function%
\begin{equation}%
    \TGrs=\Transferr=\TransferM=\TransferF,%
\end{equation}%
and, hence, the same properties with respect to tangential interpolation and pole-residue representation. In contrast to $\Sysr$, which represents the ROM we are searching for, the systems $\SysM$ and $\SysF$ are of theoretical interest only.\par %
\subsection{The \texorpdfstring{$\SysM$}{M} System}%
First, we introduce a system $\SysM$ of order $\ro$ whose transfer function $\TGMs$ belongs to a subspace defined in terms of the right tangential interpolation data $\TS$ and $\TR$.%
\begin{definition}\label{def:SysM}%
Let the right tangential interpolation data $\TS\tin\Complex^{\ro\times\ro}$ and $\TR\tin\Complex^{\is\times\ro}$ be given according to \Cref{thm:S}. Then $\SysM\teq\RealizationM$ is an ODE realization of order $q$ of a~transfer function $\TGMs$ defined by%
\begin{equation}%
    \label{eq:definition_of_M_system}%
    \TGM \in \TransferSubspaceXY{-\TS^\complexconjugatetranspose}{\TR ^\complexconjugatetranspose}.%
\end{equation}%
\end{definition}%
Recall the relations $\TS^\prim\teq\TT\,\TS\,\TT^{-1}\teq\diag(\sigma_1,\ppp,\sigma_\ro)$ and $\TR^\prim\teq\TR\,\TT^{-1}\teq \left[\Tr_1,\ppp,\Tr_\ro\right]$ from \Cref{thm:S}. Then by \Cref{remark:subspace diagonal} we have $\TransferSubspaceXY{-\TS^\complexconjugatetranspose}{\TR ^\complexconjugatetranspose}\teq\TransferSubspaceXY{-(\TS^\prim)^\complexconjugatetranspose}{(\TR^\prim) ^\complexconjugatetranspose}$, and, hence, by \Cref{lem:PseudoOptimality}, necessary and sufficient conditions for \HTPOy\ with respect to $\TransferSubspaceXY{-\TS^\complexconjugatetranspose}{\TR ^\complexconjugatetranspose}$ can be formulated as%
\begin{equation}%
    \label{eq:Condition_on_Msystem_for_HTPOy}%
    \left(\TG(\sigma_i)-\TGM(\sigma_i)\right)\Tr_i=\Tzero\;,\qquad i=1,\,\ppp\,,\,q\;.%
\end{equation}%
\begin{remark}\label{remark:positive shifts leads to stable ROM}%
Due to \Cref{eq:definition_of_M_system}, the relation $\lambda_i\teq-\complexconjugate{\sigma}_{\Msys i}$ holds for the eigenvalues of $\lambda\, \TEM-\TAM$. As a~consequence, the choice of interpolation points in the open right half-plane automatically enforces asymptotic stability of $\SysM$ for any choice of the free parameter matrix $\TCM$.%
\end{remark}%
\subsection{The \texorpdfstring{$\SysF$}{F} System}%
For given right tangential interpolation data $\TS\tin\Complex^{\ro\times\ro}$ and $\TR\tin\Complex^{\is\times\ro}$ as in \Cref{thm:S} and a~matrix $\TF\in\Complex^{\ro\times\is}$ satisfying \Cref{eq:condition_for_F}, we consider a~system $\SysF\teq\RealizationF$ with the transfer function $\TGFs\teq\TransferF$ as in \Cref{dfn:Familiy_of_reduced_transfer_functions} which is parametrized by $\TF$. Then by \Cref{thm:parametrizedFamily_tangential_interpolation}, the interpolation conditions \Cref{eq:Tangential interpolation of SysF} hold. Furthermore, it follows from \Cref{lem:PseudoOptimality} that the \HTPO\ reduced model with respect to $\TransferSubspaceXY{-\TS^\complexconjugatetranspose}{\TR ^\complexconjugatetranspose}$ must combine the properties of both realizations $\SysM$ and $\SysF$. Therefore, in the following we will link $\SysM$ and $\SysF$ together through an appropriate choice of remaining degrees of freedom such that $\TGFs\teq\TGMs$ holds. For this purpose, a~third auxiliary realization $\Sysr$ is required which will provide the \HTPO\ reduced model.%
\subsection{The \texorpdfstring{$\Sysr$}{r} System}%
Assume that $\SysM$ is asymptotically stable, $\TEM$ is nonsigular and the triple $(\TEM,\TAM,\TBM)$ is controllable, i.\,e., $\mbox{rank}\begin{bmatrix}\lambda\,\TEM-\TAM & \TBM\end{bmatrix}=q$ for all $\lambda\in\mathbb{C}$, then the generalized Lyapunov equation%
\begin{equation}%
    \label{eq:controllability_Gramian_ROM_Definition}%
    \TAM\,\GramcM\,\TEM^{\complexconjugatetranspose}+\TEM\,\GramcM\,\TAM^{\complexconjugatetranspose}+\TBM\,\TBM^{\complexconjugatetranspose}=\Tzero\;%
\end{equation}%
has a unique Hermitian positive definite solution $\GramcM$ which is the controllability Gramian of $\SysM$. Using this Gramian, we define a system $\Sysr\teq\Realizationr$ as follows.%
\begin{definition}\label{def:SysR}%
Let an~asymptotically stable system $\SysM\teq\RealizationM$ be given such that $\TEM$ is nonsingular and the triple $(\TEM,\TAM,\TBM)$ is controllable. Then $\Sysr\teq\Realizationr$ is defined as%
\begin{equation}%
    \label{eq:realization_ROM_depending_on_Msys}%
    \TAr=\TEr\GramcMinv\TEM^{-1}\TAM\,\GramcM\;,\quad \TBr=-\TEr\GramcMinv\TEM^{-1}\TBM\;,\quad\TCr=-\TCM\,\GramcM\;,%
\end{equation}%
where $\TEr$ is an~arbitrary nonsingular matrix and $\GramcM$ is the controllability Gramian of $\SysM$.%
\end{definition}%
Note that $\SysM$ and $\Sysr$ are restricted system equivalent and, as a~consequence, have the same transfer function. Indeed, we have%
\begin{equation}\label{eq:Gr_GM}%
    \begin{aligned}%
        \TGrs & = \TCr\left(s\TEr-\TAr\right)^{-1}\TBr \\%
              & = \TCM\,\GramcM\left(s\TEr-\TEr\GramcMinv\TEM^{-1}\TAM\,\GramcM\right)^{-1}\TEr\GramcMinv\TEM^{-1}\TBM \\%
              & = \TCM\,\GramcM\left(\TEr\GramcMinv\TEM^{-1} (s\TEM-\TAM)\GramcM\right)^{-1}\TEr\GramcMinv\TEM^{-1}\TBM \\%
              & = \TCM\left(s\TEM-\TAM\right)^{-1}\TBM = \TGMs.%
    \end{aligned}%
\end{equation}%
Moreover, the controllability of $(\TEM,\TAM,\TBM)$ implies the controllability of $(\TEr,\TAr,\TBr)$. In addition, it is possible to relate the matrix $\TAr$ directly to the interpolation data $\TS$ and $\TR$.%
\begin{lemma}\label{lem:Ar in terms of S_R}%
Let the matrices $\TS\tin\Complex^{\ro\times\ro}$ and $\TR\tin\Complex^{\is\times\ro}$ be given such that the pair $(-\TS^{\complexconjugatetranspose},\TR^{\complexconjugatetranspose})$ is controllable. Consider the system $\SysM=(\TEM,\TAM,\TBM,\TCM)$ with %
\begin{equation}%
    \TEM\teq\TI,\quad \TAM\teq-\TS^\complexconjugatetranspose, \quad \TBM\teq\TR^\complexconjugatetranspose%
    \label{eq:EM_AM_BM}%
\end{equation}%
and the system $\Sysr$ constructed according to \Cref{def:SysR}. Then it holds%
\begin{equation}%
    \TAr = \TEr\,\TS + \TBr\,\TR.%
    \label{eq:Ar in terms of SR}%
\end{equation}%
\end{lemma}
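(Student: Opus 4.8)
The plan is to prove the identity by direct substitution into the construction of \Cref{def:SysR}, with the generalized Lyapunov equation satisfied by the controllability Gramian $\GramcM$ serving as the one nontrivial algebraic ingredient.

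First I would specialize the defining Lyapunov equation \eqref{eq:controllability_Gramian_ROM_Definition} to the data \eqref{eq:EM_AM_BM}. Substituting $\TEM=\TI$, $\TAM=-\TS^\complexconjugatetranspose$ and $\TBM=\TR^\complexconjugatetranspose$, and using $(\TS^\complexconjugatetranspose)^\complexconjugatetranspose=\TS$ together with $(\TR^\complexconjugatetranspose)^\complexconjugatetranspose=\TR$, the equation reduces to
\[
    \TS^\complexconjugatetranspose\,\GramcM + \GramcM\,\TS = \TR^\complexconjugatetranspose\,\TR .
\]
Since $\Sysr$ is constructed according to \Cref{def:SysR}, the Gramian $\GramcM$ is the unique Hermitian positive definite — hence invertible — solution of \eqref{eq:controllability_Gramian_ROM_Definition}; the controllability of $(\TEM,\TAM,\TBM)$ required for this is precisely the assumed controllability of $(-\TS^\complexconjugatetranspose,\TR^\complexconjugatetranspose)$, and $\TEM=\TI$ is trivially nonsingular.

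Next I would write out $\TAr$ and $\TBr$ from \eqref{eq:realization_ROM_depending_on_Msys} for the present choice $\TEM=\TI$, namely $\TAr=-\TEr\,\GramcMinv\,\TS^\complexconjugatetranspose\,\GramcM$ and $\TBr=-\TEr\,\GramcMinv\,\TR^\complexconjugatetranspose$. The remaining step is to form $\TEr\,\TS+\TBr\,\TR=\TEr\,\TS-\TEr\,\GramcMinv\,\TR^\complexconjugatetranspose\,\TR$ and replace $\TR^\complexconjugatetranspose\,\TR$ by the Lyapunov relation above. The resulting summand $\TEr\,\GramcMinv\,\GramcM\,\TS$ collapses to $\TEr\,\TS$ and cancels the leading term, leaving exactly $-\TEr\,\GramcMinv\,\TS^\complexconjugatetranspose\,\GramcM=\TAr$, which is the claim \eqref{eq:Ar in terms of SR}.

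The computation is entirely routine once the Lyapunov equation has been specialized, so there is no genuine obstacle. The only point demanding care is the bookkeeping of conjugate transposes: the interpolation data enter $\SysM$ through $\TAM=-\TS^\complexconjugatetranspose$ and $\TBM=\TR^\complexconjugatetranspose$ rather than through $\TS$ and $\TR$ directly, and it is precisely the identity $\TR^\complexconjugatetranspose\,\TR=\TS^\complexconjugatetranspose\,\GramcM+\GramcM\,\TS$ that makes the two pieces of $\TEr\,\TS+\TBr\,\TR$ recombine into $\TAr$.
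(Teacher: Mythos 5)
Your proof is correct and follows essentially the same route as the paper: both arguments rest solely on the controllability Lyapunov equation \Cref{eq:controllability_Gramian_ROM_Definition}, left-multiplied (implicitly or explicitly) by $\TEr\,\GramcMinv$, together with careful bookkeeping of the conjugate transposes in \Cref{eq:EM_AM_BM} and \Cref{eq:realization_ROM_depending_on_Msys}. The only difference is presentational -- you specialize the Lyapunov equation to $\TEM=\TI$ first and then substitute into $\TEr\,\TS+\TBr\,\TR$, whereas the paper multiplies the general equation and identifies the three terms as $\TAr$, $-\TEr\,\TS$ and $-\TBr\,\TR$ directly -- which is immaterial.
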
%
\begin{proof}%
Multiplying the Lyapunov equation \Cref{eq:controllability_Gramian_ROM_Definition} from the left by $\TEr\,\GramcMinv\,\TEM^{-1}$ and taking into account \Cref{eq:realization_ROM_depending_on_Msys} and \Cref{eq:EM_AM_BM}, we obtain%
\begin{equation*}%
    0=(\TEr\,\GramcMinv\,\TEM^{-1}\,\TAM\,\GramcM)\,\TEM+\TEr\,\TAM^\complexconjugatetranspose+(\TEr\,\GramcMinv\,\TEM^{-1}\,\TBM)\, \TBM^\complexconjugatetranspose=\TAr-\TEr\,\TS - \TBr\,\TR.%
\end{equation*}%
Thus, \Cref{eq:Ar in terms of SR} is satisfied.%
\end{proof}%
Finally, we select the remaining degrees of freedom such that $\TGFs = \TGrs$, and, hence, by \Cref{eq:Gr_GM}, we get $\TGFs\teq\TGMs$. The quantities that still need to be specified are%
\vspace*{-0.5em}%
\begin{enumerate}\itemsep0pt%
    \item $\TF$, the parameter matrix in $\SysF$ satisfying \Cref{eq:condition_for_F},%
    \item $\TCM$, the output matrix in $\SysM$, or alternatively, due to \Cref{eq:realization_ROM_depending_on_Msys}, $\TCr\teq-\TCM \GramcM$, and%
    \item $\TEr$, a nonsingular matrix in $\Sysr$.%
\end{enumerate}%
\vspace*{-0.5em}%
Following the discussion so far, it is straightforward to see that the choice $\TF\assign\TEr^{-1}\TBr$ and $\TCr\assign\TC\,\TV$ with $\TV$ solving the Sylvester equation \Cref{eq:Sylvester} yields%
\begin{equation*}%
    \label{eq:connection_Fsys_rsys}%
    \begin{aligned}%
        \TGFs&=\TransferF=\TC\,\TV\left(s\,\TI-\TS-\TEr^{-1}\,\TBr\,\TR\right)^{-1}\TEr^{-1}\,\TBr\\%
        &=\TCr(s\,\TEr-\TEr\,\TS-\TBr\,\TR)^{-1}\TBr=\Transferr=\TGrs,%
    \end{aligned}%
\end{equation*}%
where we made use of \Cref{lem:Ar in terms of S_R}. Furthermore, the following lemma shows that the chosen matrix $\TF$ satisfies the spectral condition \Cref{eq:condition_for_F} once the interpolation points belong to the open right half-plane.%
\begin{lemma}\label{lem:spectral_condition_for_F}%
Let the matrices $\TS\tin\Complex^{\ro\times\ro}$ and $\TR\tin\Complex^{\is\times\ro}$ be given such that all eigenvalues of $\TS$ have positive real part and the pair $(-\TS^{\complexconjugatetranspose},\TR^{\complexconjugatetranspose})$ is controllable. Consider the system $\SysM$ as in \Cref{eq:EM_AM_BM} and the system $\Sysr$ constructed according to \Cref{def:SysR}. Then the matrix $\TF\teq\TEr^{-1}\,\TBr$ satisfies \Cref{eq:condition_for_F}.%
\end{lemma}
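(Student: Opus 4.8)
The plan is to show that the entire spectrum of $\TS+\TF\TR$ lies in the open left half-plane, while by hypothesis the spectrum of $\TS$ lies in the open right half-plane. The two sets are then automatically disjoint, which is exactly the spectral condition \eqref{eq:condition_for_F}. So the whole argument reduces to locating $\Lambda(\TS+\TF\TR)$.

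First I would insert the prescribed choice $\TF\teq\TEr^{-1}\TBr$ into \Cref{lem:Ar in terms of S_R}. That lemma gives $\TAr\teq\TEr\TS+\TBr\TR$, and factoring $\TEr$ out on the left yields $\TAr\teq\TEr\bigl(\TS+\TEr^{-1}\TBr\TR\bigr)\teq\TEr\bigl(\TS+\TF\TR\bigr)$. Hence $\TS+\TF\TR\teq\TEr^{-1}\TAr$, so the eigenvalues of $\TS+\TF\TR$ coincide with the finite generalized eigenvalues of the pencil $\lambda\TEr-\TAr$.

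Next I would evaluate $\TEr^{-1}\TAr$ directly from the realization \eqref{eq:realization_ROM_depending_on_Msys}. With $\TEM\teq\TI$ as in \eqref{eq:EM_AM_BM}, that definition reads $\TAr\teq\TEr\,\GramcMinv\,\TAM\,\GramcM$, so $\TEr^{-1}\TAr\teq\GramcMinv\,\TAM\,\GramcM$. This is a similarity transformation of $\TAM$, and therefore $\Lambda(\TS+\TF\TR)\teq\Lambda(\TEr^{-1}\TAr)\teq\Lambda(\TAM)\teq\Lambda(-\TS^\complexconjugatetranspose)$. Translating the spectral location, every eigenvalue $\lambda$ of $\TS$ has positive real part by assumption, so the associated eigenvalue $-\complexconjugate{\lambda}$ of $-\TS^\complexconjugatetranspose$ satisfies $\Re(-\complexconjugate{\lambda})\teq-\Re(\lambda)<0$; thus $\Lambda(\TS+\TF\TR)$ lies in the open left half-plane, disjoint from $\Lambda(\TS)$ in the open right half-plane, and \eqref{eq:condition_for_F} follows.

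I do not anticipate a genuine obstacle here; the only point needing care is that the similarity step $\GramcMinv\,\TAM\,\GramcM$ is well defined. This requires $\GramcM$ to be invertible, which is guaranteed because the controllability of $(-\TS^\complexconjugatetranspose,\TR^\complexconjugatetranspose)$ together with the stability of $\SysM$ (itself ensured by the eigenvalues of $\TS$ lying in the open right half-plane, cf.\ \Cref{remark:positive shifts leads to stable ROM}) makes the Lyapunov equation \eqref{eq:controllability_Gramian_ROM_Definition} have a unique Hermitian positive definite solution $\GramcM$. This is precisely the setting assumed before \Cref{def:SysR}, so the construction of $\Sysr$ and the similarity argument are both legitimate.
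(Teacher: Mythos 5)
Your proposal is correct and follows essentially the same route as the paper: both use \Cref{lem:Ar in terms of S_R} together with the definition of $\Sysr$ to obtain the chain $\TS+\TF\,\TR=\TEr^{-1}\,\TAr=\GramcMinv\,\TAM\,\GramcM=-\GramcMinv\,\TS^\complexconjugatetranspose\,\GramcM$, and then read off the spectral condition from the similarity to $-\TS^\complexconjugatetranspose$. Your additional remarks on the invertibility of $\GramcM$ and the explicit left/right half-plane argument merely spell out what the paper leaves implicit.
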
%
\begin{proof}%
It follows from \Cref{lem:Ar in terms of S_R} and the definition of the systems $\SysM$ and $\Sysr$ that%
\begin{equation*}%
    \TS+\TF\,\TR=\TS+\TEr^{-1}\,\TBr\,\TR=\TEr^{-1}\,\TAr=\GramcMinv\TAM\,\GramcM=-\GramcMinv\,\TS^\complexconjugatetranspose\,\GramcM,%
\end{equation*}%
and, hence, the condition \Cref{eq:condition_for_F} is satisfied.%
\end{proof}%
According to \Cref{eq:Tangential interpolation of SysF}, $\TGFs$ tangentially interpolates $\TGs$ as encoded in $\TS$ and $\TR$. Moreover, we have $\TGM\tin\TransferSubspaceXY{-\TS^{\complexconjugatetranspose}}{\TR^\complexconjugatetranspose}$. Then we can conclude from \Cref{lem:PseudoOptimality} that $\TGrs$ is indeed the unique \HTPO\ reduced transfer function with respect to $\TransferSubspaceXY{-\TS^{\complexconjugatetranspose}}{\TR^\complexconjugatetranspose}$.\par%
In order to determine the reduced-order system $\Sysr$ given in \Cref{eq:realization_ROM_depending_on_Msys}, we need to solve the Lyapunov equation \Cref{eq:controllability_Gramian_ROM_Definition} which for the system $\SysM$ with the system matrices as in \Cref{eq:EM_AM_BM} takes the form%
\begin{equation*}%
    \TS^{\complexconjugatetranspose}\,\GramcM+\GramcM\,\TS-\TR^{\complexconjugatetranspose}\,\TR=\Tzero.%
\end{equation*}%
To avoid the inversion of $\GramcM$ in \Cref{eq:realization_ROM_depending_on_Msys}, we choose $\TEr=\GramcM$ and get the reduced-order model%
\begin{equation}%
    \TEr=\GramcM, \quad \TAr=\GramcM\TS-\TR^{\complexconjugatetranspose}\TR=-\TS^{\complexconjugatetranspose}\GramcM, \quad     \TBr=-\TR^{\complexconjugatetranspose}, \quad \TCr=\TC\TV.%
    \label{eq:reduced_model_Sigmar}%
\end{equation}%
This finally leads to the \emph{\HTPO\ rational Krylov} (PORK) algorithm for DAEs with strictly proper transfer function, see \Cref{alg:input_PORK}. It is worth noting that the PORK algorithm for general DAEs should be applied to the strictly proper part $(\TE,\TA, \Specleftfin\,\TB, \TC\,\Specrightfin)$. Due to the duality in linear control systems, there exists also an equivalent \emph{output} PORK algorithm which can be derived in a~similar way.\par %
\begin{algorithm}[ht]%
    \KwIn{$\Sys\teq\Realization$, $\TS$, $\TR$, $\TV$}%
    \KwOut{\HTPO\ reduced-order model $\Sysr\teq\Realizationr$}%
    solve the Lyapunov equation $\TS^{\complexconjugatetranspose}\,\GramcM+\GramcM\,\TS-\TR^{\complexconjugatetranspose}\,\TR=\Tzero$ for $\GramcM$\;%
    compute the reduced-order model $\TEr=\GramcM$, $\TAr = -\TS^{\complexconjugatetranspose}\,\GramcM$, $\TBr = -\TR^{\complexconjugatetranspose}$ and $\TCr = \TC\,\TV$\;%
    \caption{(Input) \PORKA\ for strictly proper \DAE s}%
    \label{alg:input_PORK}%
\end{algorithm}%
As a summary of the results so far, \cref{fig:relations_sysM_sysF_sysr} gives an overview of the relations between the three different realizations $\SysM$, $\SysF$ and $\Sysr$ used to obtain the \HTPO\ reduced order model.\par%
\begin{figure}[!htb]%
    \centering%
    \resizebox{0.9\textwidth}{!}{\input{./relations_sysM_sysF_sysr.tex}}%
    \caption{Illustration of the dependencies between $\SysM$, $\SysF$ and $\Sysr$ during the proof of \HTPOy. To be read from top to bottom. Adapted from \cite[Figure 4.3]{Seiwald2016}.\ToEditor{layout=spanning one-column preferred if readable; online-version=colored; print-version=grayscale}}%
    \label{fig:relations_sysM_sysF_sysr}%
\end{figure}%
\subsection{Properties and Advantages of \texorpdfstring{$\Htwo$}{H2} Pseudo-Optimality}\label{sec:PORK_properties}%
The PORK algorithm provides a~reduced-order model $\TGr(s)$ which is global \HtwoText \space optimum in the subspace $\TransferSubspaceXY{-\TS^\complexconjugatetranspose}{\TR^\complexconjugatetranspose}$. Naturally, the quality of this reduced model highly depends on the selection of $\TransferSubspaceXY{-\TS^\complexconjugatetranspose}{\TR^\complexconjugatetranspose}$, as the global optimum in a badly chosen subspace can be an arbitrarily bad approximation. For this reason, \HTPO\space reduction is most valuable in combination to a subspace optimization algorithm. In \Cref{sec:spark}, we will embed \Cref{alg:input_PORK} into a globally convergent trust region optimization to optimize the subspace and find a locally \HTO\space reduced-order model.\par%
Nonetheless, \HTPO \space reduction of DAEs by itself already bears a few advantageous properties we shall briefly revise, 
see \cite{Wolf2014} for ODE case.\par%
\newcounter{property}%
\stepcounter{property}%
\textbf{\Roman{property}. Error norm decomposition:} As already observed by Meier and Luenberger in \cite{Meier1967}, by the Hilbert projection theorem, the \HTPO\space reduced model satisfies%
\begin{equation}%
    \normHtwo{\TG-\TGr}^2 = \normHtwo{\TG}^2-\normHtwo{\TGr}^2,%
    \label{eq:h2 norm decomposition}%
\end{equation}%
since $\innerproductHtwo{\TG-\TGr}{\TGr} = 0 $ holds.%
This implies that minimization of the reduction error can be achieved by maximizing the norm of the reduced model, which is far easier to compute, as it will be shown in \Cref{sec:spark}.\par%
\stepcounter{property}%
\textbf{\Roman{property}. Reduction of the search space for \HTO \space reduction:} As all \HTO \space reduced models are also \HTPO (see \Cref{remark:HTPO_HTO}), an optimization within the set of \HTPO\space models is non-restrictive.\par%
\stepcounter{property}%
\textbf{\Roman{property}. Explicit dependence of $\TV$:} The PORK algorithm constructs \HTPO \space reduced models depending explicitly on a projection matrix $\TV$ that satisfies the Sylvester equation \Cref{eq:Sylvester}. Even though the advantage of this may not be obvious at first, the dependence on $\TV$ will be crucial to adaptively select the reduced order, as it will be presented in \Cref{sec:cure}. In addition, \HTPO\space reduction will ensure monotonic decrease of the error as we increase the reduced order.\par%
\stepcounter{property}%
\textbf{\Roman{property}. Relation to the ADI iteration for Lyapunov equations:} It has been shown in \cite{Wolf2013_at,Wolf2013_ADI} that the Alternative Direction Implicit (ADI)  method \cite{Penzl2000,Li2002,Kuerschner2016} for finding low-rank solutions of large-scale sparse Lyapunov equations can be interpreted as a two-step approach using \HTPO\space reduction. As a detailed discussion of this goes beyond the scope of this contribution, we limit ourselves to indicating this connection, which may be useful in solving projected Lyapunov equations for DAEs.\par%
\stepcounter{property}%
\textbf{\Roman{property}. Stability preservation:} A selection of $\TS$ with eigenvalues in the open right half-plane yields an~\emph{asymptotically stable} \HTPO\ reduced-order model by construction, as its eigenvalues will be the mirror images of the eigenvalues of $\TS$, see \Cref{remark:positive shifts leads to stable ROM}. This trivially provides a~stability preserving model reduction. Moreover, as we are not only fixing interpolation frequencies but also reduced eigenvalues, an appropriate choice of $\TS$ becomes twice as important.\par%
%
    \section{Adaptive, Stability Preserving Reduction of the Strictly Proper Subsystem}%
\label{sec:curedspark}%
One of the main challenges in model reduction by rational interpolation is an~appropriate selection of reduction parameters to obtain a high-fidelity approximation while preserving fundamental properties such as stability. The PORK algorithm represents merely a~reduction tool that needs to be fed with appropriate tangential interpolation data $\TS$ and $\TR$.\par%
To this end, we will integrate PORK within existing methods from \cite{Panzer2013,Panzer2014} that require no additional modification in the DAE setting once the strictly proper part of the transfer function is available. Their description in the following is hence limited to a brief introduction, while details can be found in the original publications. The optimization of $\TS$ and $\TR$ to obtain an \HTO \space reduced model of prescribed order $\ro$ will be discussed in \Cref{sec:spark}. In addition, we will adaptively choose the reduced order by means of \emph{cumulative reduction} (CURE), which will be discussed in \Cref{sec:cure}. The overall reduction method resulting from \Cref{sec:spark} and \Cref{sec:cure} will then be used in \Cref{sec:numericalresults} for numerical experiments. As in the previous section, we assume that the DAE system \Cref{eq:LTI_DAE_Definition} has a~strictly proper transfer function $\TG(s)$.\par%
\subsection{Optimal Choice of \texorpdfstring{$\TS$}{S} and \texorpdfstring{$\TR$}{R}}%
\label{sec:spark}%
The goal is now to find optimal interpolation data $\TS$ and $\TR$ that minimize the $\Htwo$ approximation error $\normHtwo{\TG-\TGr}$. Exploiting \HTPOy \space and the relation \Cref{eq:h2 norm decomposition}, this problem can be equivalently written as%
\begin{equation}%
    \TGr = \arg\min_{\TS,\TR}\left(-\normHtwo{\widehat{\TG}_{\rm r}}^2 \right)\quad \text{subject to} \quad \widehat{\TG}_{\rm r}\in\TransferSubspaceXY{-\TS^\complexconjugatetranspose}{\TR^\complexconjugatetranspose}.%
    \label{eq:optimization}%
\end{equation}%
A similar problem \cite{Beattie2007,Gugercin2008} has been addressed in the context of \HTO \space reduction for SISO  and MIMO models  and arbitrary reduced orders $\ro$, see \cite{Beattie2009,Beattie2012}. In the context of \HTPO \space reduction, this problem has been investigated in \cite{Panzer2013,Panzer2014} for SISO models and $\ro\teq2$. Our discussion will go along the lines of the latter, embedding \Cref{alg:input_PORK} into the \emph{stability preserving adaptive rational Krylov} (SPARK) algorithm of \cite{Panzer2014}. Its advantages lie in the efficient implementation with real arithmetics of a globally convergent trust-region optimization for the optimal choice of $\TS\tin\Complex^{2\times2}$. As this algorithm works for SISO models only, we limit our discussion to this case and use $\Tb,\Tc^{\transpose}\tin\Real^\fo$ as input and output vectors to make this restriction evident. The optimal choice of $\TR$ in the context of \HTPO\ reduction has been addressed in \cite{Beattie2012} for the ODE case using a~pole-residue formulation but has not been extended to work with PORK, yet.\par%
Due to \HTPOy\ and the relation \cref{eq:h2 norm decomposition}, the cost functional in \Cref{eq:optimization} can be equivalently formulated  as%
\begin{equation}%
    \cost(\TS) = - \normHtwo{\widehat{\TG}_{\rm r}}^2 = - \Tc_{\rm r}\Gramcr\Tc_{\rm r}^*,%
    \label{eq:spark-cost}%
\end{equation}%
where $\Tc_{\rm r} = \Tc\TV$ and $\Gramcr$ is the controllability Gramian of the reduced model. To efficiently implement an optimization strategy, explicit knowledge of gradient $\grad$ and Hessian $\Hess$ with respect to the optimization variables is required. In the SISO case, the \subspace\ $\TransferSubspaceXY{-\TS^\complexconjugatetranspose}{\TR^\complexconjugatetranspose}$ to be optimized is completely defined by the eigenvalues of $\TS$.
Thus, we define the set of optimization variables as being $\sigma_i\teq\lambda_i\left(\TS\right)$, $i \teq 1,\dots,\ro$, in accordance with \cref{thm:S}.\par%
To simplify the problem, we first restrict our consideration to reduced models of order $q\teq2$. The limitation will be lifted in \Cref{sec:cure}. Therefore, the number of optimization variables reduces to two complex scalars $\sigma_1,\sigma_2\tin\Complex$. Furthermore, we are interested exclusively in shift parameters in the open right half-plane that are closed under conjugation. For this choice of shifts, the \HTPO\space reduced-order model is guaranteed to be real and asymptotically stable. Hence, we can parametrize the problem by two real scalar parameters $a,b\tin\Real$ satisfying $\sigma_{1,2} \teq a \pm \sqrt{a^2-b}$. The derivation of expressions for $\grad$ and $\Hess$ is based on the following results.%
\begin{lemma}[\cite{Panzer2014}]\label{lm:spark-V}%
    Let $a,b\tin\Real$ be positive numbers and $\sigma_{1,2} \teq a \pm \sqrt{a^2-b}\notin\Lambda(\TE,\TA)$ with $\sigma_1\tneq\sigma_2$. Let $\TA_{\sigma_i} := \left(\TA-\sigma_i\TE\right)$. Then the matrix%
    \begin{equation}%
        \TV\teq\begin{bmatrix}\frac{1}{2}\TA_{\sigma_1}^{-1}\,\Tb + \frac{1}{2}\TA_{\sigma_2}^{-1}\,\Tb & \TA_{\sigma_2}^{-1}\,\TE\,\TA_{\sigma_1}^{-1}\,\Tb\end{bmatrix}\tin\Real^{\fo\times2}%
        \label{eq:V-spark}%
    \end{equation}%
    with $\TA_{\sigma_i} = \left(\TA-\sigma_i\,\TE\right)$ is a real basis of the rational Krylov subspace $\Image\left(\begin{bmatrix}\TA_{\sigma_1}^{-1}\,\Tb & \TA_{\sigma_2}^{-1}\,\Tb \end{bmatrix}\right)$ and satisfies the Sylvester equation \Cref{eq:Sylvester} with%
    \begin{equation}%
        \TS = \begin{bmatrix} a & 1 \\ a^2 - b & a \end{bmatrix}, \quad \TR = \begin{bmatrix} 1 & 0 \end{bmatrix}.%
        \label{eq:Sylvester-spark}%
    \end{equation}%
\end{lemma}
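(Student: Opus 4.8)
The idea is to exhibit $\TV$ as a nonsingular change of basis applied to the \emph{primitive} rational Krylov basis and then to invoke \Cref{thm:Assembly_Krylov_to_Sylvester}. Write $\TV^\prim\teq\begin{bmatrix}\TA_{\sigma_1}^{-1}\Tb & \TA_{\sigma_2}^{-1}\Tb\end{bmatrix}$ for the primitive basis associated with the shifts $\sigma_1,\sigma_2$ and the scalar tangential directions $\Tr_1\teq\Tr_2\teq1$. By construction each column satisfies $\TA\,\TA_{\sigma_i}^{-1}\Tb-\sigma_i\TE\,\TA_{\sigma_i}^{-1}\Tb\teq\Tb$, so, exactly as in the proof of \Cref{thm:Assembly_Krylov_to_Sylvester}, $\TV^\prim$ solves the Sylvester equation \Cref{eq:Sylvester} with $\TS^\prim\teq\diag(\sigma_1,\sigma_2)$ and $\TR^\prim\teq\begin{bmatrix}1 & 1\end{bmatrix}$ as in \Cref{eq:SpRp}. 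Throughout I would use the Vieta relations $\sigma_1+\sigma_2\teq2a$, $\sigma_1\sigma_2\teq b$ and $a^2-b\teq(\sigma_1-\sigma_2)^2/4$, which follow from $\sigma_{1,2}\teq a\pm\sqrt{a^2-b}$; in particular the characteristic polynomial of $\TS$ is $\lambda^2-2a\lambda+b$, so $\TS$ has precisely the eigenvalues $\sigma_1,\sigma_2$.

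\textbf{The key step.} The only column of $\TV$ that is not literally a primitive vector is the second, $\TA_{\sigma_2}^{-1}\TE\,\TA_{\sigma_1}^{-1}\Tb$. Here I would apply the first resolvent identity: since $\TA_{\sigma_2}-\TA_{\sigma_1}\teq(\sigma_1-\sigma_2)\TE$, we have
\begin{equation*}
    \TA_{\sigma_1}^{-1}-\TA_{\sigma_2}^{-1}=\TA_{\sigma_2}^{-1}\left(\TA_{\sigma_2}-\TA_{\sigma_1}\right)\TA_{\sigma_1}^{-1}=(\sigma_1-\sigma_2)\,\TA_{\sigma_2}^{-1}\TE\,\TA_{\sigma_1}^{-1},
\end{equation*}
whence $\TA_{\sigma_2}^{-1}\TE\,\TA_{\sigma_1}^{-1}\Tb\teq\tfrac{1}{\sigma_1-\sigma_2}\left(\TA_{\sigma_1}^{-1}\Tb-\TA_{\sigma_2}^{-1}\Tb\right)$ lies in the span of the primitive vectors. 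Consequently $\TV\teq\TV^\prim\TT$ with
\begin{equation*}
    \TT=\begin{bmatrix} \tfrac{1}{2} & \tfrac{1}{\sigma_1-\sigma_2} \\[2pt] \tfrac{1}{2} & -\tfrac{1}{\sigma_1-\sigma_2} \end{bmatrix}, \qquad \det\TT=-\tfrac{1}{\sigma_1-\sigma_2}\neq0.
\end{equation*}
Since $\TT$ is nonsingular, $\Image(\TV)\teq\Image(\TV^\prim)$, which is exactly the stated rational Krylov subspace, so $\TV$ is a basis of it.

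\textbf{Verification of $\TS$, $\TR$ and reality.} By \Cref{thm:Assembly_Krylov_to_Sylvester}, $\TV\teq\TV^\prim\TT$ solves \Cref{eq:Sylvester} with $\TR\teq\TR^\prim\TT$ and $\TS\teq\TT^{-1}\TS^\prim\TT$. A direct computation gives $\TR^\prim\TT\teq\begin{bmatrix}1 & 1\end{bmatrix}\TT\teq\begin{bmatrix}1 & 0\end{bmatrix}$, and checking $\TT\,\TS\teq\diag(\sigma_1,\sigma_2)\,\TT$ entrywise—using the Vieta relations above—confirms $\TT^{-1}\diag(\sigma_1,\sigma_2)\TT\teq\TS$, i.e. the claimed $2\times2$ matrices. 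It remains to verify that $\TV$ is real. If $a^2-b\geq0$ the shifts are real and distinct, so the primitive vectors, and hence $\TV$, are real; if $a^2-b<0$ then $\sigma_2\teq\complexconjugate{\sigma_1}$ and, because $\TA,\TE,\Tb$ are real, $\TA_{\sigma_2}^{-1}\Tb\teq\complexconjugate{\TA_{\sigma_1}^{-1}\Tb}$, so the first column equals $\Re(\TA_{\sigma_1}^{-1}\Tb)$ and, since $\sigma_1-\sigma_2$ is purely imaginary, the second column equals $\Im(\TA_{\sigma_1}^{-1}\Tb)/\sqrt{b-a^2}$; both are real.

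I expect the resolvent rewriting of the second column to be the only genuinely substantive step, as it is what collapses the infinite-dimensional claim to the finite $2\times2$ similarity check. The remaining work—the entrywise verification of $\TT\TS\teq\diag(\sigma_1,\sigma_2)\TT$ and the reality argument—is routine bookkeeping.
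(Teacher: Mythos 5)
Your proof is correct, and all of its computations check out: the resolvent identity $\TA_{\sigma_1}^{-1}-\TA_{\sigma_2}^{-1}\teq(\sigma_1-\sigma_2)\,\TA_{\sigma_2}^{-1}\TE\,\TA_{\sigma_1}^{-1}$ does identify the second column of $\TV$ as $\tfrac{1}{\sigma_1-\sigma_2}\bigl(\TA_{\sigma_1}^{-1}\Tb-\TA_{\sigma_2}^{-1}\Tb\bigr)$, the transformation rule from the proof of \Cref{thm:Assembly_Krylov_to_Sylvester} gives $\TS\teq\TT^{-1}\TS^\prim\TT$ and $\TR\teq\TR^\prim\TT$, and with the Vieta relations $\sigma_1+\sigma_2\teq2a$, $\sigma_1\sigma_2\teq b$ the entrywise identities $\TT\,\TS\teq\diag(\sigma_1,\sigma_2)\,\TT$ and $\TR^\prim\TT\teq\begin{bmatrix}1 & 0\end{bmatrix}$ both hold, as does the reality argument in the complex-conjugate case. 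For comparison: the paper itself contains no proof of this lemma at all — it is imported from \cite{Panzer2014} as indicated by the citation in the lemma header — so there is no in-paper argument to measure yours against; your derivation is a self-contained justification built entirely from the paper's own machinery (primitive basis, Sylvester-equation transformation rule), which is precisely the style of argument the original reference uses. One cosmetic remark: the basis property of $\TV$ (as opposed to mere equality of ranges) transfers from $\TV^\prim$ through the nonsingular $\TT$, hence it holds exactly when the primitive matrix has full column rank; this is the same implicit genericity assumption the lemma's statement itself makes, so it is not a gap in your argument.
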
%
\begin{lemma} \label{lm:spark-ROM}%
    The \HTPO\space reduced-order model \Cref{eq:reduced_model_Sigmar} resulting from $\TV$ and $\TS$, $\TR$ as in \Cref{eq:V-spark} and \Cref{eq:Sylvester-spark}, respectively, is given by%
    \begin{equation}%
        \TEr = \frac{1}{4ab}\begin{bmatrix}(a^2+b) & -a \\ -a & \; 1 \end{bmatrix},\quad%
        \TAr = \frac{1}{4a}\begin{bmatrix}-2a & 1 \\ -1 & 0 \end{bmatrix},\quad%
        \Tb_{\rm r} = \begin{bmatrix}-1 \\ 0 \end{bmatrix}, \quad%
        \Tc_{\rm r} = \Tc\TV.%
        \label{eq:ROM HTPO ab}%
    \end{equation}%
    In addition, the reduced controllability Gramian satisfies%
    \begin{equation}%
        \Gramcr = \begin{bmatrix} 4a & 4a^2 \\ 4a^2 & 4a(a^2+b) \end{bmatrix}.%
        \label{eq:Gamma_r}%
    \end{equation}%
\end{lemma}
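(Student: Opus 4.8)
The plan is to substitute the explicit $\TS$ and $\TR$ from \Cref{eq:Sylvester-spark} into the pseudo-optimal construction \Cref{eq:reduced_model_Sigmar} and carry out the resulting $2\times2$ computations. Three of the four reduced matrices are immediate: since the construction fixes $\TEr\teq\GramcM$, $\TBr\teq-\TR^\complexconjugatetranspose$ and $\TCr\teq\TC\TV$, we have at once $\TBr\teq\begin{bmatrix}-1&0\end{bmatrix}^\transpose$ (matching $\Tb_{\rm r}$) and $\Tc_{\rm r}\teq\TC\TV$, so it remains only to determine $\GramcM$ and then $\TAr\teq-\TS^\complexconjugatetranspose\GramcM$. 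Because $a,b\tin\Real$, the matrix $\TS$ is real and $\TS^\complexconjugatetranspose\teq\TS^\transpose$ throughout.

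First I would solve the reduced Lyapunov equation $\TS^\complexconjugatetranspose\GramcM+\GramcM\TS\teq\TR^\complexconjugatetranspose\TR$ from line~1 of \Cref{alg:input_PORK}. Writing $\GramcM$ as a symmetric $2\times2$ matrix with unknown entries $g_{11},g_{12},g_{22}$ and noting $\TR^\complexconjugatetranspose\TR\teq\diag(1,0)$, the matrix equation reduces to three scalar equations. The $(2,2)$ entry gives $g_{12}\teq-a\,g_{22}$; substituting into the $(1,2)$ entry yields $g_{11}\teq(a^2+b)\,g_{22}$; and the $(1,1)$ entry then fixes $g_{22}\teq\tfrac{1}{4ab}$ after the cancellation $(a^2+b)-(a^2-b)\teq2b$. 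This produces exactly the claimed $\GramcM\teq\TEr$. Multiplying out $-\TS^\complexconjugatetranspose\GramcM$ with these entries, the diagonal terms collapse to $-\tfrac12$ and $0$ and the off-diagonal terms to $\pm\tfrac{1}{4a}$, giving the stated $\TAr$.

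For the reduced controllability Gramian I would avoid a second Lyapunov solve by exploiting a clean identity: whenever $\TEr\teq\GramcM$, the inverse $\GramcMinv$ solves the generalized Lyapunov equation $\TAr\,\Gramcr\,\TEr^\complexconjugatetranspose+\TEr\,\Gramcr\,\TAr^\complexconjugatetranspose+\TBr\TBr^\complexconjugatetranspose\teq\Tzero$ defining the controllability Gramian of $\Sysr$. Indeed, substituting $\Gramcr\teq\GramcMinv$, $\TAr\teq-\TS^\complexconjugatetranspose\GramcM$ and $\TBr\teq-\TR^\complexconjugatetranspose$, and using that $\GramcM$ is Hermitian, collapses the left-hand side to $-(\TS^\complexconjugatetranspose\GramcM+\GramcM\TS)+\TR^\complexconjugatetranspose\TR$, which vanishes by the very Lyapunov equation that defines $\GramcM$. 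Hence $\Gramcr\teq\GramcMinv$, and inverting the explicit $2\times2$ matrix $\GramcM$ (its determinant being $\tfrac{1}{16a^2b}$) yields the asserted $\Gramcr$.

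The computations are entirely elementary, so the only real obstacle is bookkeeping: keeping the $2\times2$ products and the conjugate transposes straight, and verifying that the algebraic cancellations come out as claimed. The identity $\Gramcr\teq\GramcMinv$ is what keeps the second half conceptual rather than a verbatim repeat of the first Lyapunov solve; alternatively, one may simply substitute the asserted $\Gramcr$ into the reduced Lyapunov equation and check that the residual is $\Tzero$, which I would present as a sanity check at the end.
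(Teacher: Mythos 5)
Your proposal is correct, and it is essentially the computation the paper has in mind: the paper's own proof consists of the single sentence that \Cref{eq:ROM HTPO ab} and \Cref{eq:Gamma_r} ``can be obtained by straightforward computations,'' and your solve of the $2\times2$ Lyapunov equation for $\GramcM$, the product $\TAr\teq-\TS^{\complexconjugatetranspose}\GramcM$, and the resulting entries all check out. Your observation that $\Gramcr\teq\GramcMinv$ (which follows from substituting $\TEr\teq\GramcM$, $\TAr\teq-\TS^{\complexconjugatetranspose}\GramcM$, $\TBr\teq-\TR^{\complexconjugatetranspose}$ into the reduced Lyapunov equation and invoking uniqueness, guaranteed here since $a,b>0$ makes $\Sysr$ asymptotically stable) is a clean way to obtain \Cref{eq:Gamma_r} without a second Lyapunov solve, and the inversion of $\GramcM$ indeed yields the stated matrix.
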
%
\begin{proof}%
The matrices \Cref{eq:ROM HTPO ab} and \Cref{eq:Gamma_r} can be obtained by straightforward computations.%
\end{proof}%
Note that the restriction $\sigma_1\tneq\sigma_2$ in \Cref{lm:spark-V} can be lifted easily. It was introduced here to simplify the notation.\par%
Making use of \Cref{lm:spark-V}, \Cref{lm:spark-ROM} and \Cref{eq:spark-cost}, the derivation of%
\begin{equation*}%
    \grad = \begin{bmatrix}\frac{\partial \cost}{\partial a} &\frac{\partial \cost}{\partial b}\end{bmatrix} \quad \text{and} \quad%
    \Hess = \begin{bmatrix}%
                \frac{\partial^2 \cost}{\partial a^2}&\frac{\partial^2 \cost}{\partial a\partial b}\\[2mm]%
                \frac{\partial^2 \cost}{\partial a\partial b}&\frac{\partial^2 \cost}{\partial b^2}%
            \end{bmatrix}%
\end{equation*}%
results from straightforward though lengthier computations. As the expressions match the ODE case, we refer to the original publication \cite[Section~4.4.3]{Panzer2014}. Using this relations, the optimization problem can be solved efficiently through a structured trust-region solver, as summarized in \cref{alg:SPARK}.%
\begin{algorithm}%
    \KwIn{$\Sys\teq\Realization$; initial shifts $\sigma_1,\sigma_2\tin\Complex^+$, closed under conjugation}%
    \KwOut{\HTO\ $\Sysr\teq (\TEr, \TAr, \Tb_{\rm r}, \Tc_{\rm r})$ of order $\ro\teq2$}%
    $(a,b) \gets \left( \frac{\sigma_1+\sigma_2}{2}, \sigma_1\,\sigma_2 \right)$ \;%
    \While{not converged}{%
        compute $\cost$, $\grad$, and $\Hess$ at $(a,b)$ \;%
        compute the trust-region step \;%
        update $(a,b)$\;%
    }%
    compute $\TV$ according to \cref{eq:V-spark} with optimal $\sigma_1^{opt}\teq a+\sqrt{a^2-b}$, $\sigma_2^{opt}\teq a-\sqrt{a^2-b}$ \;%
    return reduced order model $\Sysr$ as in \cref{eq:ROM HTPO ab} with optimal $(a,b)$ \;%
    \caption{SPARK for strictly proper \DAE s (adapted from \cite{Panzer2014})}%
    \label{alg:SPARK}%
\end{algorithm}%
\subsection{Adaptive Selection of Reduced Order \texorpdfstring{$\ro$}{q}}%
\label{sec:cure}%
The previous subsection dealt with the problem of finding an \HTO\space reduced SISO model of order $\ro\teq2$ by means of \HTPO\space reduction. Clearly, a reduced order of two might be restrictive for most problems. Nonetheless, it bears the advantage of reducing the search space for optimization to two real scalars $a,b$. So what if $\ro\teq2$ is not enough?\par%
The answer to this question was given in \cite{Panzer2013} by using a decomposition of the error system introduced in \cite{Wolf2011,Wolf2012} and proposing a cumulative reduction scheme. As this method is applicable to MIMO models as well, we will switch back to the matrices $\TB\tin\Real^{\fo\times m}$ and $\TC\tin\Real^{p\times \fo}$. The following result can be proved analogously to the ODE case \cite{Wolf2012}.%
\begin{theorem} \label{thm:Error Factorization CURE}%
Consider a DAE system \cref{eq:LTI_DAE_Definition} with a strictly proper transfer function $\TGs$. Let interpolation data $\TS\tin\Complex^{\ro\times\ro}$ and $\TR\tin\Complex^{\is\times\ro}$ be given according to \cref{thm:S}. Consider the reduced-order model \cref{eq:ROM_by_projection} with the transfer function $\TGrs$ obtained through projection with matrices $\TW$ and $\TV$ such that $\TV$ solves the Sylvester equation \Cref{eq:Sylvester} and  $\TW$ satisfies $\det\left(\TW^{\transpose}\TE\TV\right)\tneq0$. Define $\TB_{\bot}\teq\bigl(\TI-\TE\TV(\TW^{\transpose}\TE\TV)^{-1}\TW^{\transpose}\bigr)\TB$. Then the error system $\TGes \teq \TGs-\TGrs$ can be factorized as%
\begin{equation}%
    \TGes = \TGbots\,\widetilde{\TG}_{\rm r}(s),%
    \label{eq:cure-factorization}%
\end{equation}%
where $\TGbots$ and $\widetilde{\TG}_{\rm r}(s)$ have realizations $\Sys_\bot\teq(\TE,\TA,\TB_{\bot}, \TC, \Tzero)$ and $\widetilde{\Sys}_\rsys\teq\RealizationrTilde$, respectively.%
\end{theorem}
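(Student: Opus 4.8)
My plan is to prove the factorization at the level of state-space realizations rather than by pushing resolvents around: I will (i) assemble a descriptor realization of the series connection $\TGbots\,\TGrtildes$, (ii) write the standard block-diagonal realization of the error system $\TGes\teq\TGs-\TGrs$, and (iii) exhibit an explicit restricted system equivalence, i.e. an invertible left/right multiplication of the descriptor quintuple, that carries the former into the latter. Since the transfer function is invariant under such equivalences, this yields \Cref{eq:cure-factorization} directly.

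First I would assemble the cascade realization. Writing $\TGbots=\TC(\sEmA)^{-1}\TB_{\bot}$ (realization $\Sys_\bot$) and $\TGrtildes=\TR(s\TEr-\TAr)^{-1}\TBr+\TI_m$ (realization $\widetilde{\Sys}_\rsys$), the standard series-connection formula gives for the product $\TGbots\,\TGrtildes$ the realization $(\TE_c,\TA_c,\TB_c,\TC_c,\Tzero)$ with
\begin{equation*}
    \TE_c=\begin{bmatrix}\TE & \Tzero\\\Tzero & \TEr\end{bmatrix},\quad
    \TA_c=\begin{bmatrix}\TA & \TB_{\bot}\TR\\\Tzero & \TAr\end{bmatrix},\quad
    \TB_c=\begin{bmatrix}\TB_{\bot}\\\TBr\end{bmatrix},\quad
    \TC_c=\begin{bmatrix}\TC & \Tzero\end{bmatrix},
\end{equation*}
where the feedthrough $\TI_m$ of $\widetilde{\Sys}_\rsys$ has been absorbed into the lower $\TB$-block and the off-diagonal coupling block is $\TB_{\bot}\TR$ (the feedthrough of $\Sys_\bot$ being zero). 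Since $\TGs$ and $\TGrs$ are strictly proper, the error system $\TGes=\TGs-\TGrs$ has the block-diagonal realization $(\TE_e,\TA_e,\TB_e,\TC_e,\Tzero)$ given by $\TE_e=\diag(\TE,\TEr)$, $\TA_e=\diag(\TA,\TAr)$, $\TB_e=[\TB;\,\TBr]$, $\TC_e=[\TC\ \ {-\TCr}]$ with $\TCr\teq\TC\TV$.

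Next I would propose the unipotent block-triangular transformation
\begin{equation*}
    \TZ=\begin{bmatrix}\TI & -\TV\\\Tzero & \TI\end{bmatrix},\qquad
    \TU=\begin{bmatrix}\TI & \TE\TV\TEr^{-1}\\\Tzero & \TI\end{bmatrix},
\end{equation*}
which are invertible and well defined because $\TEr=\TW^{\transpose}\TE\TV$ is nonsingular by hypothesis, and verify the four identities $\TU\TE_c\TZ=\TE_e$, $\TU\TA_c\TZ=\TA_e$, $\TU\TB_c=\TB_e$, $\TC_c\TZ=\TC_e$. Three of these are immediate: $\TC_c\TZ=[\TC\ \ {-\TC\TV}]$ reproduces $\TC_e$; the $\TE$-identity reduces to $-\TE\TV+\TE\TV\TEr^{-1}\TEr=\Tzero$; and the first block of $\TU\TB_c$ is $\TB_{\bot}+\TE\TV\TEr^{-1}\TBr=\TB$, which is exactly the definition of $\TB_{\bot}$.

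The crux is the $(1,2)$ block of $\TU\TA_c\TZ$, which must vanish, i.e. $-\TA\TV+\TB_{\bot}\TR+\TE\TV\TEr^{-1}\TAr=\Tzero$; this is the one step where all hypotheses enter simultaneously and I expect it to be the main obstacle, the rest being bookkeeping. I would substitute $\TB_{\bot}=\TB-\TE\TV\TEr^{-1}\TBr$, replace $\TA\TV$ via the Sylvester equation \Cref{eq:Sylvester} in the form $\TA\TV=\TE\TV\TS+\TB\TR$, and replace $\TAr$ via the projected relation $\TAr=\TEr\TS+\TBr\TR$ (obtained by multiplying \Cref{eq:Sylvester} from the left by $\TW^{\transpose}$); after these substitutions the terms in $\TE\TV\TS$, $\TB\TR$, and $\TE\TV\TEr^{-1}\TBr\TR$ cancel in pairs, leaving $\Tzero$. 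Invertibility of $\TZ$ and $\TU$ then forces the two realizations to share the same transfer function, giving $\TGes=\TGbots\,\TGrtildes$. As a consistency check I would, if desired, rederive the identity purely at the transfer-function level using the resolvent identity $(\sEmA)^{-1}\TA=s(\sEmA)^{-1}\TE-\TI$ together with $(s\TI-\TS)(s\TEr-\TAr)^{-1}\TBr=\TEr^{-1}\TBr+\TEr^{-1}\TBr\TR(s\TEr-\TAr)^{-1}\TBr$; the same two Sylvester relations drive the cancellation in that route as well, and strict properness is what guarantees the error realization carries no feedthrough so that the stated realizations of $\Sys_\bot$ and $\widetilde{\Sys}_\rsys$ are the correct factors.
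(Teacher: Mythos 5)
Your proof is correct, and it is worth pointing out that the paper itself does not actually prove \Cref{thm:Error Factorization CURE}: it only remarks that the result ``can be proved analogously to the ODE case'' and defers to the cited reference, so your argument supplies a self-contained proof rather than replicating one. The ODE-case route the paper points to works at the transfer-function level: one writes $\TGes \teq \TC\left(\sEmA\right)^{-1}\bigl(\TB-\left(\sEmA\right)\TV\left(s\,\TEr-\TAr\right)^{-1}\TBr\bigr)$, replaces $\left(\sEmA\right)\TV$ by $s\,\TE\TV-\TE\TV\TS-\TB\TR$ via \Cref{eq:Sylvester}, and uses the projected identity $\TAr\teq\TEr\TS+\TBr\TR$ to collapse the bracket to $\TB_{\bot}\bigl(\TI_m+\TR\left(s\,\TEr-\TAr\right)^{-1}\TBr\bigr)$ --- essentially the resolvent computation you sketch as a consistency check at the end. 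Your primary route is different: you exhibit an explicit restricted system equivalence between the cascade realization of $\TGbots\,\TGrtildes$ and the block-diagonal realization of the error system, using the unipotent transformations $\TZ$ and $\TU$. Both approaches hinge on exactly the same two identities (the Sylvester equation \Cref{eq:Sylvester} and its left multiplication by $\TW^{\transpose}$), but the realization-level argument has two advantages in the DAE setting: it is coordinate-free in $s$ (no resolvent manipulations that implicitly presume regularity of the pencils involved at the evaluation point), and it makes transparent that only the invertibility of $\TEr\teq\TW^{\transpose}\TE\TV$ --- never of the singular matrix $\TE$ --- is needed, which is precisely the hypothesis of the theorem. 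All four block identities you claim do check out; in particular the crucial $(1,2)$ block of $\TU\,\TA_c\,\TZ$ reduces, after your substitutions, to $\TE\TV\TEr^{-1}\left(\TAr-\TEr\TS-\TBr\TR\right)\teq\Tzero$, and the feedthrough bookkeeping ($\TD_c\teq\Tzero\cdot\TI_m\teq\Tzero$) is consistent with the strict properness of $\TGs$ and $\TGrs$.
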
%
In \Cref{eq:cure-factorization}, the factor $\TGbots$ is high-dimensional with a modified input matrix $\TB_{\bot}$, while the factor $\widetilde{\TG}_{\rm r}(s)$ is small dimensional with a modified output matrix $\TR$ and unity
feedthrough matrix. The interpretation of \Cref{eq:cure-factorization} becomes even more evident in the context of \HTPO\space reduction. One can show \cite{Panzer2013} that if $\TGrs$ is an \HTPO\space approximation of $\TGs$, then $\widetilde{\TG}_{\rm r}(s)$ is all-pass. %
%
%
%
%
As $\widetilde{\TG}_{\rm r}(s)$ has no influence on the amplitude response of $\TGes$, the dynamics of the error are represented by $\TGbots$.
This motivates the CURE scheme derived in the following. \par%
Assume that reduction by rational interpolation has been performed and the approximation error $\TGes\teq\TGs -\TGrs$ is larger (with respect to some norm) than desired. Instead of conducting a new reduction with different parameters or a higher reduced order, CURE further improves the approximation quality by adding an additional term coming from the reduction of $\TGbots$ according to%
\begin{subequations}%
    \begin{align}%
        \TG(s) &= \TGr(s) + \TGe(s), \nonumber\\%
        &= \TGr(s) + \TGbot^{(1)}(s)\widetilde{\TG}_{\rm r}(s),  \label{eq:cure:1} \\%
        &= \TGr(s) + \left(\TGr^{(2)}(s) + \TGe^{(2)}(s) \right)\widetilde{\TG}_{\rm r}(s), \nonumber\\%
        &= \underbrace{\left(\TGr(s) + \TGr^{(2)}(s)\widetilde{\TG}_{\rm r}(s)\right)}_{\TG_{\rsys,tot}^{(2)}(s)} + \underbrace{\TGbot^{(2)}(s)\left(\widetilde{\TG}_{\rm r}^{(2)}(s)\widetilde{\TG}_{\rm r}(s)\right)}_{\TG_{\esys,tot}^{(2)}(s)}.%
        \label{eq:cure:2}%
    \end{align}%
    \label{eq:cure}%
\end{subequations}%
Noticing that \Cref{eq:cure:2} has the same structure as \Cref{eq:cure:1}, this procedure can be repeated until the error $\TG^{(k)}_{\esys,tot}(s)$ is small enough.\par%
The matrices of the overall reduced model $\TG_{\rsys,tot}^{(k)}$ can be assembled easily at every step. Detailed expressions are given in \cite[Theorem 4.2]{Panzer2014}. The overall reduced order results from the sum of each individual reduction $\ro_{tot}\teq\sum\ro_j$. If \HTPO\space reduction is conducted in each step of CURE, then following properties hold.%
\begin{lemma}[\cite{Wolf2014}]%
\label{thm:CUREdSPARK_H2pseudooptimality_properties}%
If in each step of CURE \cref{eq:cure}, $\TGr^{(k)}$ is an \HTPO\space reduced model of $\TGbot^{(k-1)}$, then%
\vspace*{-1em}%
\begin{enumerate}\itemsep0pt%
    \item the reduction error $\normHtwo{\TG^{(k)}_{\esys,tot}}$ decreases monotonically with $k$,%
    \item the overall reduced model $\TG_{\rsys,tot}^{(k)}$ is \HTPO\ as well.%
\end{enumerate}%
\end{lemma}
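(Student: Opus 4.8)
The plan is to run everything off the error factorisation of \Cref{thm:Error Factorization CURE}. Iterating the splitting \Cref{eq:cure} $k$ times produces, on the one hand, the local recursion
\begin{equation*}
    \TGbot^{(k-1)}(s) = \TGr^{(k)}(s) + \TGbot^{(k)}(s)\,\TGrtilde^{(k)}(s),
\end{equation*}
and, on the other hand, the closed form of the accumulated error
\begin{equation*}
    \TG_{\esys,tot}^{(k)}(s) = \TGbot^{(k)}(s)\,\TPhi^{(k)}(s), \qquad \TPhi^{(k)}(s) \assign \TGrtilde^{(k)}(s)\cdots\TGrtilde^{(1)}(s),
\end{equation*}
where $\TGr^{(j)}$ is by hypothesis the \HTPO\ reduction of $\TGbot^{(j-1)}$, so that each factor $\TGrtilde^{(j)}$ is all-pass (as noted below \Cref{eq:cure-factorization}). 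The single analytic fact I will use throughout is that an all-pass factor is co-inner on the imaginary axis, $\TGrtilde^{(j)}(\imath\omega)\,\TGrtilde^{(j)}(\imath\omega)^{\complexconjugatetranspose} = \TI_m$; consequently $\TPhi^{(k)}$ is co-inner as well, and the para-conjugate identity $\TPhi^{(j)}(-\complexconjugate{s})^{\complexconjugatetranspose} = \TPhi^{(j)}(s)^{-1}$ holds by analytic continuation.

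For statement~1 I would argue step-by-step, without induction. Co-innerness of $\TPhi^{(k)}$ makes right multiplication an $\Htwo$-isometry, so that $\normHtwo{\TG_{\esys,tot}^{(k)}} = \normHtwo{\TGbot^{(k)}}$ and likewise $\normHtwo{\TG_{\esys,tot}^{(k-1)}} = \normHtwo{\TGbot^{(k-1)}}$. Applying the error-norm decomposition \Cref{eq:h2 norm decomposition} (Property~I) to the \HTPO\ reduction of $\TGbot^{(k-1)}$ and using $\TGbot^{(k-1)} - \TGr^{(k)} = \TGbot^{(k)}\TGrtilde^{(k)}$ together with co-innerness of $\TGrtilde^{(k)}$ gives
\begin{equation*}
    \normHtwo{\TGbot^{(k)}}^2 = \normHtwo{\TGbot^{(k)}\TGrtilde^{(k)}}^2 = \normHtwo{\TGbot^{(k-1)}}^2 - \normHtwo{\TGr^{(k)}}^2 \le \normHtwo{\TGbot^{(k-1)}}^2 .
\end{equation*}
Chaining the two norm identities yields $\normHtwo{\TG_{\esys,tot}^{(k)}} \le \normHtwo{\TG_{\esys,tot}^{(k-1)}}$, which is the claimed monotonicity.

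For statement~2 I would verify the tangential characterisation \Cref{eq:Condition_PseudoOptimality_Moments} of \HTPOy\ directly at every pole of $\TG_{\rsys,tot}^{(k)}$. The poles of the cumulative model are the mirror images $-\complexconjugate{\sigma}_i^{(j)}$ of the interpolation points of all steps $j=1,\dots,k$ (see the assembly in \cite[Theorem~4.2]{Panzer2014}); the pole stemming from step $j$ carries the input-residual direction $\TPhi^{(j-1)}(\sigma_i^{(j)})^{-1}\Tr_i^{(j)}$, obtained as the residue of $\TGr^{(j)}\TPhi^{(j-1)}$ and rewritten with the para-conjugate identity. At such a pole the condition \Cref{eq:Condition_PseudoOptimality_Moments} reads, after cancelling the trailing all-pass factors against $\TPhi^{(j-1)}(\sigma_i^{(j)})^{-1}$,
\begin{equation*}
\begin{aligned}
    \TG_{\esys,tot}^{(k)}(\sigma_i^{(j)})\,\TPhi^{(j-1)}(\sigma_i^{(j)})^{-1}\,\Tr_i^{(j)}
    &= \TGbot^{(k)}(\sigma_i^{(j)})\,\TGrtilde^{(k)}(\sigma_i^{(j)})\cdots\TGrtilde^{(j+1)}(\sigma_i^{(j)})\,\bigl[\TGrtilde^{(j)}(\sigma_i^{(j)})\,\Tr_i^{(j)}\bigr]\\
    &= \Tzero,
\end{aligned}
\end{equation*}
because each all-pass factor has a \emph{transmission zero} at its own interpolation points along the associated tangential direction, i.e. $\TGrtilde^{(j)}(\sigma_i^{(j)})\Tr_i^{(j)} = \Tzero$. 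Since this holds for every pole, \Cref{lem:PseudoOptimality} shows $\TG_{\rsys,tot}^{(k)}$ is \HTPO. Equivalently, one can run an induction on $k$ using the inner-product form \Cref{eq:Condition_PseudoOptimality_InnerProduct}, splitting any test function of the cumulative subspace into a part from $\TransferSubspace^{(k-1)}$ and a part carried by $\TPhi^{(k-1)}$; the latter collapses by co-innerness to the step-$k$ orthogonality from \Cref{lem:PseudoOptimality}, and the former reduces to the same transmission-zero argument.

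The transmission-zero identity $\TGrtilde^{(j)}(\sigma_i^{(j)})\Tr_i^{(j)} = \Tzero$ is the load-bearing lemma and the first thing I would establish: feeding the \HTPO\ realisation \Cref{eq:reduced_model_Sigmar} and \Cref{lem:Ar in terms of S_R} into a push-through identity gives the compact form $\TGrtilde^{(j)}(s) = (\TI_m - \TR(s\TI - \TS)^{-1}\TEr^{-1}\TBr)^{-1}$, whose inverse has a simple rank-one pole at each eigenvalue $\sigma_i$ of $\TS$ in the direction $\Tr_i$, so that $\TGrtilde^{(j)}(\sigma_i)\Tr_i$ vanishes. I expect the main obstacle to be not this identity but the bookkeeping in statement~2: correctly identifying the poles and, above all, the input-residual directions of the cumulative model $\TG_{\rsys,tot}^{(k)}$ through the nested all-pass products, and handling the para-conjugate cancellations cleanly. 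Keeping the interpolation data in the transformed $(\TS,\TR)$ coordinates of \Cref{thm:S} and exploiting that all interpolation points lie in the open right half-plane (so every $\TGrtilde^{(j)}$ is genuinely all-pass and $\TG_{\rsys,tot}^{(k)}$ stays asymptotically stable) should keep this manageable.
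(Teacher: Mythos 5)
The first thing to note is that the paper itself does not prove this lemma: it is imported from \cite{Wolf2014} (with the supporting CURE machinery from \cite{Panzer2013,Panzer2014}), so your argument can only be compared against that cited literature. There the proof is state-space: one shows that the accumulated projection matrix of CURE solves a Sylvester equation \Cref{eq:Sylvester} with block-structured interpolation data, so the cumulative model is itself a PORK output and hence \HTPO, while monotonicity follows from the norm decomposition \Cref{eq:h2 norm decomposition}. Your route is genuinely different and stays in the frequency domain: co-innerness of the all-pass factors makes right multiplication an $\Htwo$-isometry (this settles statement~1 completely and correctly), and \HTPOy\ of the cumulative model is verified through the tangential conditions \Cref{eq:Condition_PseudoOptimality_Moments} using the transmission-zero property $\TGrtilde^{(j)}(\sigma_i^{(j)})\,\Tr_i^{(j)}=\Tzero$. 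That load-bearing lemma is indeed true, and it follows even more directly than via your push-through identity: by \Cref{lem:Ar in terms of S_R}, if $\TS\Tv=\sigma_i\Tv$ with $\TR\Tv=\Tr_i$, then $(\sigma_i\TEr-\TAr)\Tv=-\TBr\Tr_i$, whence $\TGrtilde^{(j)}(\sigma_i)\,\Tr_i=\Tr_i-\TR\Tv=\Tzero$ using the realization $\RealizationrTilde$ of \Cref{thm:Error Factorization CURE}. What your approach buys is a self-contained argument that never needs the explicit assembled realization of \cite[Theorem~4.2]{Panzer2014}; what the state-space induction buys is that it avoids residue bookkeeping altogether.

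That bookkeeping is also the one place where your sketch needs tightening. The residue of $\TG_{\rsys,tot}^{(k)}$ at a step-$j$ pole $-\complexconjugate{\sigma}_i^{(j)}$ is \emph{not} just the residue of the single term $\TGr^{(j)}\TPhi^{(j-1)}$: every later term $\TGr^{(l)}\TPhi^{(l-1)}$ with $l>j$ also has a pole there, because its trailing product contains the factor $\TGrtilde^{(j)}$. Your claimed input-residual direction survives this, since the residue of $\TGrtilde^{(j)}$ at that pole carries the same left eigenvector of step $j$ as the residue of $\TGr^{(j)}$; hence all contributions share the input row $\Tb_i^{(j)}\TPhi^{(j-1)}(-\complexconjugate{\sigma}_i^{(j)})$ and differ only in output direction, so the total residue remains rank one along exactly the direction you use. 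This argument (together with the implicit assumption that shifts are distinct across CURE steps, so that the cumulative pole matrix is diagonalizable and \Cref{lem:PseudoOptimality} applies) must be made explicit to close the proof; as written, identifying the cumulative residue with that of one term is a genuine, though repairable, oversight.
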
%
The combination of results from \cref{sec:spark} and \cref{sec:cure} finally yields the reduction procedure CUREd SPARK given in \cref{alg:CUREd SPARK}. For all details on implementation, we refer to \cite{Panzer2014,Castagnotto2017}. Typically, stopping criteria for the CURE iteration include the (estimated) approximation error falling below a given threshold or stagnation of the reduced model.\par%
\begin{algorithm}%
    \KwIn{$\Sys\teq\Realization$; initial shifts $\sigma_1,\sigma_2\in\Complex^+$ closed under conjugation}%
    \KwOut{\HTPO\ reduced model $\Sysr\teq\Realizationr$ of order $\ro$}%
    $(a,b) \gets \left( \frac{\sigma_1+\sigma_2}{2}, \sigma_1\,\sigma_2 \right)$ \;%
    $\TG_{r,tot}^{(0)}(s)\gets \left[\;\right]$, $\TG_{\bot}^{(0)}(s) \gets \TGs$, $k\gets 0$ \;%
    \While{CURE not converged}{
        $k = k+1$ \;%
        \While{SPARK not converged}{%
            compute $\cost$, $\grad$, and $\Hess$ at $(a,b)$ w.r.t. $\TG_{\bot}^{(k-1)}(s)$ \;%
            compute the trust-region step \;%
            update $(a,b)$\;%
        }%
        compute $\TV^{(k)}$ according to \cref{eq:V-spark} with optimal $\sigma_1^{opt}=a+\sqrt{a^2-b}$ and $\sigma_2^{opt}=a-\sqrt{a^2-b}$\;%
        return $\TG_{r}^{(k)}(s)$ as in \cref{eq:ROM HTPO ab} with optimal $(a,b)$ \;%
        assemble $\TG_{r,tot}^{(k)}(s)$ according to \cite[Theorem 4.2]{Panzer2014}\;%
        compute $\TB_\bot^{(k)} = \TB_\bot^{(k-1)} - \TE\TV^{(k)}\left(\TEr^{(k)}\right)^{-1}\TBr^{(k)}$ and define $\TG_{\bot}^{(k)}(s)$ according to \cref{thm:Error Factorization CURE}\;%
    }%
    $\ro\gets 2\,k$ \;%
    \caption{CUREd SPARK for strictly proper \DAE s (adapted from \cite{Panzer2014})}%
    \label{alg:CUREd SPARK}%
\end{algorithm}%
%
%
%
\section{Numerical Results}%
\label{sec:numericalresults}%
In this section, we apply the reduction scheme as presented in \Cref{fig:overall_procedure} to DAE benchmark models of different indices and structures. Special structure of the system matrices is exploited to efficiently compute the projectors $\Specleftfin$ and $\Specrightfin$ required for determining the strictly proper and polynomial parts of the transfer function. To reduce the strictly proper part $\TG^\strictlyproper(s)$, we apply the CUREd SPARK method derived in \Cref{sec:curedspark}, where we monitor the convergence of the \HtwoText\ norm of the reduced model as the stopping criterion. Every trust-region optimization has been started at $a\teq b\teq10^{-4}$. For the polynomial part $\TP(s)$, we determine a~minimal realization as described in \Cref{subsec:MOR_projection}.\par%
Note that all algorithms are independent of the specific index and structure of the model as long as analytic expressions for the spectral projectors $\Specleftfin$ and $\Specrightfin$ are known. The numerical expe\-ri\-ments were conducted using sss and sssMOR, open-source MATLAB toolboxes for analysis and reduction of large-scale linear models \cite{Castagnotto2017}. The functions \mcode{DAEmor}, \mcode{cure} and \mcode{spark} used for the following results are available for download within sssMOR. \ToEditor{By the time of submission, the functions might still not be available for download. They will be however before publication.}\par%
\subsection{Semi-Explicit Index 1 DAE}%
\label{sec:numericalresults_index1SE}%
As a first test case, we consider a semi-explicit DAE of index~1. Such systems typically arise in computational fluid dynamics and power systems modelling \cite{Benner2017}. Their structure is given by%
\begin{equation}%
    \label{eq:SEIndex1_Structure}%
    \begin{bmatrix} \TE_{11} & \TE_{12} \\ \Tzero & \Tzero \end{bmatrix}\begin{bmatrix} \dot{\Tx}_1 \\ \dot{\Tx}_2\end{bmatrix}=\begin{bmatrix} \TA_{11} & \TA_{12} \\ \TA_{21} & \TA_{22} \end{bmatrix}\begin{bmatrix}\Tx_1 \\ \Tx_2\end{bmatrix}+\begin{bmatrix}%
    \TB_1 \\ \TB_2\end{bmatrix}\Tu\;,\qquad \Ty=\begin{bmatrix} \TC_1 & \TC_2 \end{bmatrix}\begin{bmatrix}\Tx_1 \\ \Tx_2\end{bmatrix}\;,%
\end{equation}%
where the matrices $\TE_{11}$ and $\TA_{22}-\TA_{21}\,\TE_{11}^{-1}\,\TE_{12}$ are assumed to be nonsingular. Note that system \Cref{eq:SEIndex1_Structure} has a proper transfer function $\TGs$ and, hence, the polynomial part of $\TGs$ is constant, i.\,e., $\TPs\equiv\TP$. The analytic expressions for the spectral projectors are given in \cite{Benner2017}.\par%
In the following, we investigate the DAE system ``BIPS/97'' created by the Brazilian Electrical Energy Research Center (CEPEL). It is available online from the MOR Wiki \cite{RommesMORWikiPowersystems} under the name MIMO46. Since this system is of MIMO-type, we restrict ourselves to the channel $G_{42,42}$, i.\,e., $u_{42}\rightarrow y_{42}$. Furthermore, simple row and column reordering has been applied in order to obtain the semi-explicit structure given in \Cref{eq:SEIndex1_Structure}. The model is of order $\fo\teq13250$, whereby  $n_{\finite}\teq1664$. As $\TE_{11}\teq\TI_{n_{\finite}}$ and $\TE_{12}\teq\Tzero$ holds for this model, $\Specleftfin$ and $\Specrightfin$ simplify to%
\begin{equation}%
    \Specleftfin = \begin{bmatrix} \TI_{n_{\finite}} & -\TA_{12}\,\TA_{22}^{-1} \\ \Tzero & \Tzero\end{bmatrix}\;,\qquad \Specrightfin = \begin{bmatrix} \TI_{n_{\finite}} & \Tzero \\ -\TA_{22}^{-1}\,\TA_{21} & \Tzero\end{bmatrix}\;,%
\end{equation}%
and $\TPs\teq-\TC_2\,\TA_{22}^{-1}\,\TB_2$. Note that the strictly proper part of $\TGs$ in \Cref{eq:RealizationofstrictlyproperPart} only involves terms of the form $\Specleftfin\TB$ and $\TC\,\Specrightfin$. The explicit computation of $\Specleftfin$ and $\Specrightfin$, which are generally dense matrices, can be hence avoided by implementing the projector-vector product in terms of matrix-vector multiplications and sparse linear solves. In sssMOR, this is done by creating function handles through the function \mcode{projInd1se} called by \mcode{DAEmor}.\par%
\Cref{image:ind1se_freqresp} shows the result of model reduction after 16 CURE iterations. The polynomial part is constant and its minimal realization is of order 1. Therefore, the resulting reduced order is $\ro = 16\cdot 2 + 1 =33$. Note that for this special case, the constant contribution $\TPs\teq-\TC_2\,\TA_{22}^{-1}\,\TB_2$ could be added as a~feedthrough term without increasing the order of the reduced model by one. Here, we prefer to illustrate the more general strategy presented in \cref{fig:overall_procedure}.\par%
\Cref{image:ind1se_h2norm_of_Gr_and_stopping_criterion} shows the improvement of the reduced-order model over the iterations in CURE. \Cref{image:cure_norm_of_ROM} depicts the \HTN\ of the reduced-order model $\TG_{r,tot}^{(k)}(s)$ showing a monotonic increase and stagnation. \Cref{image:cure_stopping_criterion} depicts the relative increase of the \HtwoText-norm of the reduced model over the iterations, which has been used as stopping criterion. At $k\teq16$ the desired tolerance of $10^{-6}$ is achieved. The frequency response of the error system $\TG_{e,tot}^{(k)}(s)\teq\TGs-\TG_{r,tot}^{(k)}(s)$ is shown in \Cref{image:ind1se_error}. As one can see, the overall error decreases over the iterations.\par%
Finally, we stress out the main advantage of the proposed procedure: the reduced model in \Cref{image:ind1se_freqresp} is an \HTPO\ approximation obtained without any prior specification of reduced order and interpolation frequencies. In addition, the reduced-order model preserves stability, as expected from theoretical considerations.\par%
\begin{figure}[htb]%
    \centering
    \pgfplotsset{yticklabel style={text width=1.5em,align=right}}%
    \input{ind1se_freqresp}%
    \caption{Reduction of the BIPS/97 (MIMO46) benchmark model.\ToEditor{layout=spanning one-column preferred; online-version=colored; print-version=grayscale}}%
    \label{image:ind1se_freqresp}%
\end{figure}%
\begin{figure}[htb]%
    \centering%
    \captionsetup[subfigure]{justification=centering}%
    \begin{minipage}[b]{.4\linewidth}%
        \setlength{\myheight}{4cm}%
        \setlength{\mywidth}{4cm}%
        \centering\pgfplotsset{yticklabel style={text width=2.0em,align=right}}
%
\definecolor{mycolor1}{rgb}{0.00000,0.40000,0.74000}%
\begin{tikzpicture}

\begin{axis}[%
width=0.951\mywidth,
height=\myheight,
at={(0\mywidth,0\myheight)},
scale only axis,
xmin=0,
xmax=16,
xlabel style={font=\color{white!15!black}},
xlabel={CURE iteration $k$},
ymin=0,
ymax=1,
ylabel style={font=\color{white!15!black}},
ylabel={$\|G_{r,tot}^{(k)}\|_{\mathcal{H}_2}$},
axis background/.style={fill=white}
]
\addplot [color=mycolor1, line width=1.0pt, forget plot]
  table[row sep=crcr]{%
1	1.16774996286596e-10\\
2	0.840948446980415\\
3	0.935455526812182\\
4	0.961774894004547\\
5	0.962020405874063\\
6	0.962064664212379\\
7	0.973224128907168\\
8	0.977066322613256\\
9	0.977575383173968\\
10	0.977580701409414\\
11	0.977585491064271\\
12	0.977963146214397\\
13	0.978894280222319\\
14	0.978962623254166\\
15	0.978966175574187\\
16	0.978966306627833\\
};
\end{axis}
\end{tikzpicture}
        \subcaption{Norm of reduced model.}\label{image:cure_norm_of_ROM}%
    \end{minipage}%
    \hfill%
    \begin{minipage}[b]{.6\linewidth}%
        \setlength{\myheight}{4cm}%
        \setlength{\mywidth}{7cm}%
        \pgfplotsset{yticklabel style={text width=2.5em,align=left}}
%
\definecolor{mycolor1}{rgb}{0.00000,0.40000,0.74000}%
\begin{tikzpicture}

\begin{axis}[%
width=0.95\mywidth,
height=\myheight,
at={(0\mywidth,0\myheight)},
scale only axis,
xmin=2,
xmax=16,
xtick={ 2,  4,  6,  8, 10, 12, 14, 16},
xlabel style={font=\color{white!15!black}},
xlabel={CURE iteration $k$},
ymode=log,
ymin=1e-10,
ymax=10000000000,
ylabel style={font=\color{white!15!black}},
ylabel={$\frac{\|G_{r,tot}^{(k)}\|_{\mathcal{H}_2}-\|G_{r,tot}^{(k-1)}\|_{\mathcal{H}_2}}{\|G_{r,tot}^{(k)}\|_{\mathcal{H}_2}}$},
axis background/.style={fill=white},
legend style={legend cell align=left, align=left, draw=white!15!black}
]
\addplot [color=mycolor1, line width=1.0pt]
  table[row sep=crcr]{%
2	7201442719.80738\\
3	0.112381537977879\\
4	0.0281353484350613\\
5	0.000255269576120327\\
6	4.6005612818939e-05\\
7	0.0115994954496375\\
8	0.00394790222721109\\
9	0.000521009218033892\\
10	5.4402305311339e-06\\
11	4.89949816958141e-06\\
12	0.000386314193057622\\
13	0.000952115641091834\\
14	6.98165606112084e-05\\
15	3.62865745487624e-06\\
16	1.33869432745846e-07\\
};
\addlegendentry{value}

\addplot [color=green!36!orange, dashed, line width=1.5pt]
  table[row sep=crcr]{%
2	1e-06\\
16	1e-06\\
};
\addlegendentry{tolerance}

\end{axis}
\end{tikzpicture}
        \subcaption{Relative increase in \HtwoText\ norm.}\label{image:cure_stopping_criterion}%
    \end{minipage}%
    \caption{Adaptation of the reduced model for the BIPS/97 (MIMO46) model during CURE.\ToEditor{layout=spanning two-columns preferred or separated into two figures; online-version=colored; print-version=grayscale}}%
    \label{image:ind1se_h2norm_of_Gr_and_stopping_criterion}%
\end{figure}%
\begin{figure}[!ht]%
    \centering%
    \pgfplotsset{yticklabel style={text width=2.5em,align=right}}%
    \input{ind1se_error}%
    \caption{Magnitude plot of the error over the CURE iterations for the  BIPS/97 (MIMO46) model.\ToEditor{layout=spanning one-column preferred; online-version=colored; print-version=grayscale}}%
    \label{image:ind1se_error}%
\end{figure}%
\subsection{Stokes-Like Index 2 DAE}%
\label{sec:numericalresults_index2}%
As a second example, we consider a Stokes-like system of index~2. These systems arise in computational fluid dynamics where the flow of an incompressible fluid is modeled by the Navier-Stokes equation. Linearization and discretization in space by the finite element method leads to a DAE system of the form%
\begin{equation}%
    \label{eq:StokesLikeIndex2_Structure}%
    \begin{bmatrix} \TE_{11} & \Tzero \\ \Tzero & \Tzero \end{bmatrix}\begin{bmatrix} \dot{\Tx}_1 \\ \dot{\Tx}_2\end{bmatrix}=\begin{bmatrix} \TA_{11} & \TA_{12} \\ \TA_{21} & \Tzero \end{bmatrix}\begin{bmatrix}\Tx_1 \\ \Tx_2\end{bmatrix}+\begin{bmatrix}%
    \TB_1 \\ \TB_2\end{bmatrix}\Tu\;,\qquad%
    \Ty=\begin{bmatrix} \TC_1 & \TC_2 \end{bmatrix}\begin{bmatrix}\Tx_1 \\ \Tx_2\end{bmatrix}\;.%
\end{equation}%
If the matrices $\TE_{11}$ and $\TA_{21}\,\TE_{11}^{-1}\,\TA_{12}$ are nonsingular, then the DAE \Cref{eq:StokesLikeIndex2_Structure} is of index~2 and analytic expressions for the spectral projectors exist \cite{Benner2017}.\par%
In our experiments, we use a computer generated model as described in \cite[p.~34ff.]{Schmidt2007} of dimension $n=19039$ and dynamical order $n_\finite\teq12640$. Since for this model $\TE_{11}\teq\TI_{n_\finite}$ holds, the spectral projectors take the form%
\begin{equation}%
    \label{eq:StokesLikeIndex2_SpectralProjectors}%
    \Specleftfin=\begin{bmatrix} \TK & -\TK\,\TA_{11}\,\TA_{12}\left(\TA_{21}\,\TA_{12}\right)^{-1}\\ \Tzero & \Tzero \end{bmatrix}\;,\quad\Specrightfin=\begin{bmatrix} \TK & \Tzero \\ -\left( \TA_{21}\,\TA_{12}\right)^{-1}\TA_{21}\,\TA_{11}\,\TK & \Tzero\end{bmatrix}\;,%
\end{equation}%
where $\TK\teq \TI_{n_\finite}-\TA_{12}\left(\TA_{21}\,\TA_{12}\right)^{-1}\TA_{21}$. Note that the explicit computation of the dense matrices $\Specleftfin$ and $\Specrightfin$ is not necessary. Instead, sparse matrix-vector multiplications and linear solves are performed, which significantly reduces memory consumption.\par%
\Cref{image:ind2stokes_freqresp} shows the reduction result after only four CURE iterations. Since the transfer function of the original model is strictly proper, the resulting reduced-order model is of ODE-type and has the order $q\teq8$.\par%
\begin{figure}[htb]%
    \centering%
    \pgfplotsset{yticklabel style={text width=2.0em,align=right}}%
    \input{ind2stokes_freqresp}%
    \caption{Reduction of the Stokes-like benchmark model.\ToEditor{layout=spanning one-column preferred; online-version=colored; print-version=grayscale}}%
    \label{image:ind2stokes_freqresp}%
\end{figure}%
As in the previous example, we observe in \Cref{image:ind2stokes_error} a~steady decrease of the overall error during the CURE iteration. Due to the simpler dynamics in this case, the CURE iteration converges fast, see \Cref{image:ind2stokes_stopcrit},
and no significant improvement was noted after four steps. Again, note that the reduced model in \Cref{image:ind2stokes_freqresp} was obtained fully automatically while preserving stability.\par%
%
%
%
%
\begin{figure}[htb]%
    \centering%
    \pgfplotsset{yticklabel style={text width=2.5em,align=right}}%
    \input{ind2stokes_error}%
    \caption{Magnitude plot of the error over CURE iterations for the Stokes-like model.\ToEditor{layout=spanning one-column preferred; online-version=colored; print-version=grayscale}}%
    \label{image:ind2stokes_error}%
\end{figure}%
\begin{figure}[htb]%
    \centering%
    \setlength{\myheight}{4cm}%
    \setlength{\mywidth}{6cm}%
    \pgfplotsset{yticklabel style={text width=2.5em,align=right}}%
%
\definecolor{mycolor1}{rgb}{0.00000,0.40000,0.74000}%
\begin{tikzpicture}

\begin{axis}[%
width=0.951\mywidth,
height=\myheight,
at={(0\mywidth,0\myheight)},
scale only axis,
xmin=2,
xmax=4,
xtick={2, 4},
xlabel style={font=\color{white!15!black}},
xlabel={CURE iteration $k$},
ymode=log,
ymin=1e-07,
ymax=0.0001,
ylabel style={font=\color{white!15!black}},
ylabel={$\frac{\|G_{r,tot}^{(k)}\|_{\mathcal{H}_2}-\|G_{r,tot}^{(k-1)}\|_{\mathcal{H}_2}}{\|G_{r,tot}^{(k)}\|_{\mathcal{H}_2}}$},
axis background/.style={fill=white},
legend style={legend cell align=left, align=left, draw=white!15!black}
]
\addplot [color=mycolor1, line width=1.0pt]
  table[row sep=crcr]{%
2	2.69425781256993e-05\\
3	1.8511224946141e-06\\
4	1.35964464671743e-07\\
};
\addlegendentry{value}

\addplot [color=green!36!orange, dashed, line width=1.5pt]
  table[row sep=crcr]{%
2	1e-06\\
4	1e-06\\
};
\addlegendentry{tolerance}

\end{axis}
\end{tikzpicture}
    \caption{Relative increase in \HtwoText-norm for the Stokes-like model.\ToEditor{layout=spanning one-column preferred; online-version=colored; print-version=grayscale}}%
    \label{image:ind2stokes_stopcrit}%
\end{figure}%
%
%
%
%
    \section{Conclusions}%
\label{sec:conclusions}%
In this contribution, we have derived a new formulation of the \HTIP\ of two strictly proper transfer functions in terms of their DAE realizations by introducing projected Sylvester equations. Based on this result, we have proved the validity of \HTPO\ reduction of the strictly proper part of a~given DAE. This makes it possible to extend the fully adaptive rational Krylov reduction method called CUREd SPARK to the DAE case. This method does not require a-priori knowledge of interpolation frequencies nor the reduced order and, in addition, guarantees stability preservation. We have exploited the special structure present in various DAE models for an~efficient numerical implementation. The approximation quality of the reduced-order models obtained by the presented adaptive model reduction method has been demonstrated through numerical examples. The algorithms related to this contribution are available as MATLAB code within the sssMOR toolbox.\par%
%
    %
    \printbibliography%
\end{document}